\documentclass[11pt,oneside]{amsart}
\usepackage[utf8]{inputenc}
\usepackage{enumitem}
\usepackage{csquotes}
\usepackage{bbm}
\usepackage{subcaption}
\usepackage{float}
\usepackage{fancyhdr}
\usepackage{etoolbox}
\usepackage{hyperref}
\usepackage[hyperpageref]{}%{backref}
\usepackage{amsmath, amssymb}
\usepackage{mathrsfs}
\usepackage{palatino} %DA TOGLIERE PER EJP
\usepackage{comment}
\usepackage[normalem]{ulem}

\numberwithin{equation}{section}  %numberwithin goes before cleverefs when using hyperref

\usepackage[top=2.5cm, bottom=2.5cm,left=2.5cm,right=2.5cm]{geometry} %DA TOGLIERE PER EJP

\usepackage{amsthm}

\theoremstyle{plain}
\newtheorem{theorem}{Theorem}
\newtheorem{corollary}[theorem]{Corollary}
\newtheorem{lemma}[theorem]{Lemma}
\newtheorem{proposition}[theorem]{Proposition}

\theoremstyle{remark}
\newtheorem{remark}{Remark}

\usepackage[svgnames]{xcolor}

\definecolor{bordeaux}{HTML}{cc0000}

\definecolor{viola}{rgb}{0.6, 0.4, 0.8}
\usepackage{pgfplots}

\def \1{\mathbbm{1}} % funziona solo con il package bbm.
 
 \def \R {{\mathbb R}}
 
 \def \N {{\mathbb N}}

 \def \E {{\mathbb E}}
 
 \def \cE {\mathcal{E}}

 \def \s {{\sigma}}
 
 \def \z {{\z}}
 \def \m {{\mu}}

 \def \z {{\zeta}}

 \def \G {{\Gamma}}

\def \bm {{\bf m}}
\usepackage{todonotes}

\pgfplotsset{compat=1.18} 

\title[Ising on 2-community SBM]{The Ising Model on a Two-Community Stochastic Block Model
}

\author{Alessandra Bianchi$^{*}$}
\author{Vanessa Jacquier$^{*}$}
\author{Matteo Sfragara$^{*}$}

\begin{document}

\numberwithin{equation}{section}
	\begin{abstract}
We study the Ising model on a two-community stochastic block model, where $n$ spins are split into two equal groups with inter-community interaction parameter $\alpha_n\in[0,1]$. We provide a complete characterization of the phase diagram and show that, almost surely with respect to the graph realization, the model undergoes a uniqueness/non-uniqueness phase transition of the Gibbs measure. In particular, in the supercritical regime, the law of the magnetization vector of the two communities converges to a mixture of Dirac measures that, depending on whether $\alpha_n\gg 1/n$ or $\alpha_n\lesssim1/n$, is supported on two or four points, with possibly different weights. In the uniqueness region, we further analyze the fluctuations of the magnetization vector in the subcritical regime and we prove a quenched central limit theorem.

   \par\bigskip\noindent
   {\it MSC 2020:} primary:  60J10, 05C80, 05C81.
   \par\smallskip\noindent
   {\it Keywords:}  Stochastic block model; bipartite Curie-Weiss model; phase transition; fluctuations.\\

    \par\smallskip\noindent
    {\it Acknowledgments.} 
    The authors thank Giacomo Passuello for useful discussions and suggestions. 
    The work of A.~Bianchi and V.~Jacquier is partially funded by the GNAMPA project ``Ising model on random graphs with community structure: equilibrium and dynamical transitions''.
    The work of A.~Bianchi is partially supported by the Italian Ministry of Foreign Affairs and International Cooperation, grant number BR26GR05. 	
	\end{abstract}

\maketitle
\begingroup
\renewcommand{\thefootnote}{\fnsymbol{footnote}}
\footnotetext[1]{\textit{Dipartimento di Matematica ``Tullio Levi-Civita'', 
Università di Padova, Via Trieste 63, 35121 Padova, Italy.}
\texttt{alessandra.bianchi@unipd.it, vanessa.jacquier@unipd.it, matteo.sfragara@unipd.it }}
\endgroup

\section{Introduction}\label{sec:intro}

Interacting spin systems constitute a fundamental class of models in statistical mechanics and probability theory, describing how local interactions at the microscopic level generate collective macroscopic behavior in large systems \cite{FV18, G81}. Among these, mean-field models have been extensively investigated due to their analytical tractability and their ability to capture key features such as the structure of equilibrium states, phase transitions, and fluctuation phenomena. A well-known example is the Curie–Weiss (CW) model, which has become a central model in the rigorous study of equilibrium statistical mechanics. Beyond their original physical motivation, mean-field spin systems have proved to be remarkably flexible modeling tools. Over the past decades, they have been successfully employed in diverse contexts, including social dynamics, opinion formation, decision-making processes, finance, ecology, and chemical systems. This broad range of applications highlights the versatility of these models: by appropriately modifying the structure of interactions or introducing disorder, one can analyze both physical, social, and biological phenomena in a mathematically rigorous setting.

A natural extension of the CW model arises when one considers several interacting populations, or communities, so that the system is partitioned into distinct groups with potentially different interaction strengths within and across groups. From the perspective of statistical mechanics, this generalization offers a richer landscape for equilibrium analysis: the phase diagram can exhibit multiple phase transitions, metastable states, and complex patterns of magnetization \cite{C14, GBC09, KT20}.

This viewpoint connects closely to random graph theory, which provides a powerful framework for modeling large systems in which collective behavior is shaped not only by interaction strengths but also by the topology of the underlying network. Within this setting, a particularly relevant class of random graphs is the Stochastic Block Model (SBM), in which vertices are divided into communities, producing stronger interactions within groups and weaker interactions between them. The SBM has become a central model in contexts where modular organization is essential \cite{A18, GN02, SBKNM19}. In network science, it serves as a reference model for community detection and for analyzing the spread of information, diseases, and behaviors across structured populations. In the life sciences, it is used to model metabolic and protein interaction networks, capturing how aggregation and modularity influence global behavior. More broadly, the SBM provides a theoretical
framework for investigating emergent phenomena driven by community structure.

In this paper, we study the Ising model on a two-community SBM (2-SBM) and investigate how the presence of interacting communities affects spontaneous magnetization and asymptotic fluctuations. We consider a mean-field spin system consisting of $n$ spins partitioned into two equally sized communities, where the interaction network is random and reflects the underlying community structure. 

Our first set of results concerns the thermodynamic limit and the phase diagram of the model. 
We characterize the limiting free energy and determine the asymptotic law of the magnetization vector as the system size tends to infinity. 
In particular, almost surely with respect to the graph realization, we show that a phase transition emerges as a function of the inverse temperature $\beta$ and of the inter-community interaction parameter $\alpha_n$. 
Explicitly, while at high temperature regimes the system exhibits uniqueness of the asymptotic Gibbs measure,  
in the low-temperature regime, the measure loses uniqueness and concentrates on multiple Dirac masses: four in the case $\alpha_n \lesssim 1/n$, with weights possibly of  different depth, and only two if $\alpha_n \gg 1/n$. 

As a refinement of this analysis, we further investigate the fluctuations of the magnetization vector in the subcritical/uniqueness regime. At high temperatures, we establish a quenched central limit theorem (CLT), showing that, under the diffusive scale $\sqrt{n}$, the magnetization converges to a Gaussian distribution whose covariance matrix captures both intra- and inter-community correlations. The quenched nature of the result emphasizes the robustness of this asymptotic behavior under disorder.

These findings extend the classical results for the one-community CW and Erd\"{o}s--R\'enyi CW models, both of which are well understood in terms of equilibrium properties \cite{BG93} and fluctuations \cite{KLS19, KLS20, KLS21, KLS22}.
Comparable results have also been obtained in the multi-community framework, but only under deterministic settings and assuming inter-community parameters that remain constant (i.e., independent of the system size) \cite{C14, LS18, KT20, KLSS20}, apart from the very recent work \cite{LL26}, which also considers size-dependent and random inter-community interactions.
In this paper, we address this gap by first analyzing the deterministic two-community setting with the inter-community parameter 
$\alpha_n$, and then incorporating randomness through the two-community stochastic block model (2-SBM).\\

\noindent
{\bf Outline of the paper.} In Section~2, we introduce the Ising model on the 2-SBM, 
present the main results, and outline the key ideas of their proofs. 
In Section~3, we analyze the bipartite CW model, establishing the corresponding phase diagram and 
proving convergence results for its magnetization.
We then return to the Ising model on the 2-SBM. In Section~4, we show that the random Gibbs measure concentrates 
around the corresponding bipartite CW Gibbs measure, and we establish the results on the asymptotic behavior of the magnetization. 
Finally, Section~5 is devoted to the proof of the fluctuation results for the magnetization at high temperatures.

%%%%%%%%%%%%%%%%%%%%%%%%%%%%%%%%%%%%%%%%%%%%%%%%%%%%%%%%%

\section{Model and results}\label{sec:ER}

\subsection{Mathematical model}
For any $n \in 2\mathbb{N}$, consider a two-community Stochastic Block Model (2-SBM) on $n$ vertices with two communities of  equal size $n/2$.  This is obtained by including every edge independently and according to Bernoulli laws with a parameter depending on whether the vertices belong to the same or different communities. Formally,  let $\mathscr{G}=(V,E)$ be a complete graph on $n$ vertices, with $V=[n]=\{1,\ldots,n\}$, and consider a partition of  $\mathscr{G}$ into two equal-sized communities $\mathscr{G}_1=(V_1, E_1)$ and $\mathscr{G}_2=(V_2, E_2)$, where $\mathscr{G}_1$ and $\mathscr{G}_2$  are complete graphs  on $n/2$ vertices. Without loss of generality we assume $V_1=\{1,\ldots, n/2\}$ and $V_2=\{n/2+1,\ldots, n\}$. Let $\cE_I$ and $\cE_E$ denote respectively the set of \textit{internal edges} and \textit{external edges}, that are identified by ordered pairs of vertices as
\begin{align*}
    &\cE_I=\{(i,j):\, i,j\in V_1 \, \text{  or  }\,  i,j\in V_2,\, i<j\} \qquad \text{and} \qquad \cE_E=\{(i,j):\, i\in V_1,\, j\in V_2 \} .
\end{align*}
We then consider two sequences of parameters $(p_n)_{n\in\N}\in(0,1]$ and $(\alpha_n)_{n\in\N}\in[0,1]$ such that:
\begin{itemize}
\item[-\,] Every edge in $\cE_I$ is included in the graph with  probability $p_n$;
\item[-\,] Every edge in $\cE_E$ is included in the graph with  probability $\alpha_n p_n$.
\end{itemize}
This procedure produces a 2-SBM, whose interaction matrix has intra- and inter-connection probabilities among vertices, given by
$p_n$  and $\alpha_n p_n$ respectively.\\

We now define the Ising model on the above 2-SBM. To this end, it is convenient to map the random procedure 
described above with two independent sequences  of Bernoulli i.i.d. random variables, 
$(\xi_{ij})_{(i,j)\in \cE_I}$ and $(\zeta_{ij})_{(i,j)\in \cE_E}$, 
with parameters $p_n$ and $p_n\alpha_n$, respectively. Let $\mathcal{X}_{n}=\{-1,+1\}^{n}$ be the configuration space of a $n$-spin system, 
with elements denoted by $\sigma=(\s_i)_{i\in V} \in \mathcal{X}_{n}$, where $\sigma_i\in\{-1,+1\}$ 
is the spin value at $i \in V$. 
The Hamiltonian of the Ising model on the 2-SBM is defined as
\begin{align}\label{def:hamil-DCW}
    H_{n}(\sigma)&=-\frac{1}{np_n} \left\{ \sum_{(i,j) \in \cE_I} \xi_{ij} \sigma_i \sigma_j + \sum_{(i,j) \in \cE_E} \zeta_{ij} \sigma_i \sigma_j \right \},
\end{align}
and the corresponding Gibbs probability measure is given by
\begin{align*}
    \mu_{n,\beta}(\sigma)=\frac{e^{-\beta H_n(\sigma)}}{Z_{n,\beta}}\quad \mbox{with } \quad Z_{n,\beta}=\sum_{\sigma\in\mathcal{X}_n} e^{-\beta H_n(\sigma)},
    \end{align*}
where $\beta> 0$ is the inverse temperature and $Z_{n,\beta}$ is referred to as the partition function.

Notice that all these quantities are random as they depend on the realization of the Bernoulli random variables $\xi_{ij}$'s and $\zeta_{ij}$'s, and thus intrinsically also on the value of $p_n$ and $\alpha_n$.
Throughout the paper, we will denote by $\Omega$ the probability space where these sequences are defined, and we write $\mathbb{P}$ (resp. $\E$) to denote their joint probability law (resp. expectation). We will usually hide the dependence from $\omega\in\Omega$,  although the presence of a source of randomness should be kept in mind. In this framework, the Gibbs measure $\mu_{n,\beta}$ is also referred to as a \textit{quenched Gibbs measure}, in contrast to the \textit{annealed Gibbs measure}, which is obtained by averaging $\mu_{n,\beta}$ with respect to $\mathbb{P}$.\\

We assume that
\begin{equation}\label{assump}
\lim_{n\to\infty}n p_n =+\infty \quad \mbox{and } \quad \lim_{n\to\infty}\alpha_n= \hat\alpha\in [0,1],
\end{equation}
and for notational convenience, we will simply write $p\equiv p_n$, while keeping the symbol $\alpha_n$
to avoid being confused with its limit $\hat\alpha$, and because we are interested in understanding the asymptotic behavior of the model depending on the different choice of $(\alpha_n)_{n\in\N}$.\\

An important immediate observation is that taking the $\mathbb{P}$-average of the Hamiltonian \eqref{def:hamil-DCW}, one obtains the following Hamiltonian of a bipartite Curie-Weiss (CW) model
\begin{align}\label{def:hamiltonian_CW}
    \bar{H}_{n}(\sigma)=-\frac{1}{n}\sum_{(i, j) \in\cE_I} \sigma_i \sigma_j -\frac{\alpha_n}{n} \sum_{(i, j) \in\cE_E}\sigma_i \sigma_j,
\end{align}
with corresponding Gibbs measure $ \bar{\mu}_{n,\beta}(\sigma)=\frac{e^{-\beta \bar{H}_n(\sigma)}}{\bar{Z}_{n,\beta}}$ and partition function $\bar{Z}_{n,\beta}$.
In this sense, the Ising model on the 2-SBM can be considered
as a randomized version of the bipartite CW model. This relationship will be indeed a key tool in the proofs, which basically rest on the concentration properties of the graph.\\

Before stating the main results and describing the phase diagram of the model, it is convenient to introduce some further notation. Let us consider the (random) \textit{finite- and infinite-volume free energies} of the model
\begin{align}\label{free_energy}
    f_{n,\beta}=\frac{1}{ n \beta} \log Z_{n,\beta} \quad \mbox{and } \quad     f_{\beta}=\lim_{n\to\infty}f_{n,\beta} ,
\end{align}
and define analogously the finite- and infinite-volume free energies for the bipartite CW model
\begin{align}\label{free_energy_CW}
    \bar{f}_{n,\beta}=\frac{1}{n \beta} \log \bar{Z}_{n,\beta},\quad \mbox{ and } \quad
    \bar{f}_{\beta}=\lim_{n \to \infty} \bar{f}_{n,\beta}.
\end{align}
Let us introduce the \textit{local magnetization}
of the community $\mathscr{G}_i$, for $i=1,2$,
\begin{align*}
    m_i^{(n)} (\sigma)= \frac{1}{n/2} \sum_{j \in V_i} \sigma_j,
\end{align*}
taking values in $\Gamma_{n/2}=\left \{ -1, -1+\frac{4}{n},...,1-\frac{4}{n}, 1 \right \}$, 
and let $\bm^{(n)}(\s)=(m_1^{(n)}(\s),m_2^{(n)}(\s))$ be the global \textit{magnetization vector} of $\s$,
taking values in $\Gamma_{n/2}^2$. 

Denoting by $\|\cdot\|$ the Euclidean norm in $\R^2$, for $\alpha \in [0,1]$ and $\bm=(m_1,m_2) \in [-1,1]^2$, define
\begin{align}\label{def:E}
    E_{\alpha}(\bm)=\frac{\|\bm\|^2}{4} +\frac{\alpha}{2} m_1m_2,
\end{align}
and note that $\bar{H}_n(\sigma)=-\frac{n}{2} E_{\alpha_n}(\bm^{(n)}(\s)) +1\,$.
However, the additive constant $+1$ in the  Hamiltonian does not affect the associated Gibbs measure, 
since it only produces a  constant multiplicative factor in the Boltzmann weight, which cancels out with the normalization. 
Therefore, in the following, we neglect this term and work with the Hamiltonian
\begin{align}\label{def:hamiltonian_CW_magnet}
\bar{H}_n(\sigma)=-\frac{n}{2} E_{\alpha_n}(\bm^{(n)}(\s)).
\end{align}
We will show that the infinite-volume free energy of the bipartite CW model is equal to
\begin{align*}
    \bar{f}_{\beta}=\frac{1}{2 \beta}\sup_{\bm \in [-1,1]^2} G_{\hat \alpha,\beta}(\bm),
\end{align*}
where, for $\alpha\in[0,1],\, \beta> 0$ and $\bm\in[-1,1]^2$, the free energy functional is given by
\begin{align}\label{def:free_energy_functional_CW}
    G_{\alpha,\beta}(\bm)= \beta E_{\alpha}(\bm )-\mathcal{I}(\bm),
\end{align}
and the entropic term $\mathcal{I}({\bf{m}})=\mathcal{I}(m_1)+\mathcal{I}(m_2)$
is defined by setting   
\begin{align*}
\mathcal{I}(m_i)=\frac{1+m_i}{2}\log \left ( \frac{1+m_i}{2} \right )+\frac{1-m_i}{2}\log \left ( \frac{1-m_i}{2} \right ),\quad \mbox{ for }\, i=1,2.
\end{align*}

\subsection{Results}
Our first result describes the convergence of the magnetization law and characterizes the phase diagram. Since the free energy functional $G_{\alpha,\beta}(\bm)$ may exhibit one, two, or four maximizers depending on the value of $\beta$ and $\alpha$, 
we introduce the following notation. For $m_g>0$ and $m_l>0$ defined,  respectively, 
in \eqref{eq:condition_mg} and \eqref{eq:condition_ml}  as implicit functions of $\alpha$ and $\beta$, we set
\begin{equation*}
\bm^0=(0,0), \quad \bm^1=-\bm^2 =(m_g,m_g), \quad \bm^3=-\bm^4=(m_l,- m_l)\,.
\end{equation*}
Here, $\bm^1, \bm^2$ are symmetric maximizers  corresponding to the \textit{global maximum} of $G_{\alpha,\beta}(\bm)$, 
while $\bm^3, \bm^4$ are anti-symmetric maximizers  corresponding to the \textit{local maximum}. 
We will show that they are all nonzero when $\beta$ is larger than some explicit critical value $\beta_c\equiv\beta_c(\alpha)$,  
and that if $\alpha=0$, then $m_g=m_l=m^*$,  where $m^*>0$ is the unique positive solution of the classical mean-field equation $\mathrm{atanh}(m)=\frac{\beta}{2}m$
(see Eq.\eqref{eq:expansion_mstar}).

Let $\mathcal{L}(\bm^{(n)})$ be the law of $\bm^{(n)}$ under the Gibbs measure $\mu_{n,\beta}$ and let $\delta_x$ be the Dirac measure at $x$.

\begin{theorem}[Convergence of the Magnetization Law]
\label{thm_convergence}
Under the assumptions in \eqref{assump}, the following statements hold almost surely with respect to $\mathbb{P}$:
\begin{itemize}
\item[(i)] For any $\beta > 0$, the infinite-volume free energy exists and coincides with the free energy $\bar{f}_\beta$ of the bipartite CW model defined in \eqref{free_energy_CW}, i.e.
\[
\lim_{n \to \infty} f_{n,\beta} = \bar{f}_\beta.
\]

\item[(ii)] A phase transition in the uniqueness of the infinite-volume Gibbs measure occurs at 
$\beta_c = \frac{2}{1+\hat{\alpha}}$.
More precisely:
\begin{itemize}
\item[(a)] If $\beta \le \beta_c$, then
$\lim_{n \to \infty} \mathcal{L}(\bm^{(n)}) = \delta_{\bm^0}$.

\item[(b)] If $\beta > \beta_c$, then
\[
\lim_{n \to \infty} \mathcal{L}(\bm^{(n)}) =
\begin{cases}
\frac{1}{2} \left( \delta_{\bm^1} + \delta_{\bm^2} \right), & \text{if } \alpha_n \gg \frac{1}{n}, \\[6pt]
\frac{1}{2(1+e^{-c})} \left( \delta_{\bm^1} + \delta_{\bm^2} \right)
+ \frac{e^{-c}}{2(1+e^{-c})} \left( \delta_{\bm^3} + \delta_{\bm^4} \right),
& \text{if } \alpha_n \lesssim \frac{1}{n},
\end{cases}
\, 
\]
where $c = \lim_{n \to \infty} n \alpha_n \ge 0$. \footnote{ Given two sequences $(a_n)_{n \in \mathbb{N}}$ and $(b_n)_{n \in \mathbb{N}}$,  we write $a_n \gg b_n$ if $\displaystyle{\lim_{n\to\infty} b_n/a_n=0}$, and $a_n \lesssim b_n$, if $\displaystyle{\lim_{n\to\infty}a_n/b_n=c< \infty}$.}
\end{itemize}
\end{itemize}
\end{theorem}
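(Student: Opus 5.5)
The plan is to reduce everything to the bipartite CW model $\bar\mu_{n,\beta}$ and then do a Laplace-type analysis of its magnetization law. I will proceed in three steps: control the disorder, treat the deterministic model, and transfer the deterministic results back to the random graph.

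\textbf{Step 1: controlling the disorder.} Write $H_n(\sigma)=\bar H_n(\sigma)+\Delta_n(\sigma)$, where
\[
\Delta_n(\sigma)=-\frac{1}{np}\Big\{\sum_{\cE_I}(\xi_{ij}-p)\sigma_i\sigma_j+\sum_{\cE_E}(\zeta_{ij}-\alpha_n p)\sigma_i\sigma_j\Big\}.
\]
I would show that $\sup_\sigma|\Delta_n(\sigma)|=o(n)$ $\mathbb P$-a.s. The tool is the spectral norm of the centered adjacency matrix, which is $O(\sqrt{np})$ with high probability when $np\gg\log n$. This gives $|\Delta_n|\le n\cdot O(\sqrt{np})/(np)=o(n)$. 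A weaker alternative that needs only $np\to\infty$ is a Bernstein bound for fixed $\sigma$ combined with a union bound over $2^n$ configurations. That route also gives $\sup_\sigma|\Delta_n|\le \varepsilon n$ with probability $1-e^{-c n}$ for $np$ large, and Borel–Cantelli then upgrades it to an almost sure statement. With this in hand, $|f_{n,\beta}-\bar f_{n,\beta}|\le \sup|\Delta_n|/n\to0$. Moreover, for any set $A$ of magnetization vectors,
\[
\mu_{n,\beta}(\bm^{(n)}\in A)\le e^{2\beta\sup|\Delta_n|}\,\bar\mu_{n,\beta}(\bm^{(n)}\in A).
\]

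\textbf{Step 2: the bipartite CW model.} Here $\bar\mu_{n,\beta}(\bm^{(n)}=\bm)\propto\binom{n/2}{\frac n4(1+m_1)}\binom{n/2}{\frac n4(1+m_2)}e^{\frac{n\beta}{2}E_{\alpha_n}(\bm)}$. Stirling's formula turns this into $\exp\{\frac n2 G_{\alpha_n,\beta}(\bm)+O(\log n)\}$. Since $\alpha_n\to\hat\alpha$ uniformly in $\bm$, we get $\bar f_\beta=\frac1{2\beta}\sup G_{\hat\alpha,\beta}$, which together with Step 1 proves (i).

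For (ii), I study the critical points of $G$, which satisfy $\mathrm{atanh}(m_1)=\frac\beta2(m_1+\alpha m_2)$ and the symmetric equation for $m_2$. The Hessian at $\bm^0$ has eigenvalues $\frac{\beta}{2}(1\pm\alpha)-1$, so $\bm^0$ is the unique maximizer precisely when $\beta\le 2/(1+\hat\alpha)$. Uniqueness follows by noting $\mathrm{atanh}(m)\ge m$ and using concavity-type arguments; the critical case $\beta=\beta_c$ is settled by the fourth-order term. For $\beta>\beta_c$ there are two cases:
\begin{itemize}
\item If $\hat\alpha>0$, the maximizers are $\pm(m_g,m_g)$. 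I would show this by proving that any maximizer has $m_1m_2\ge0$ (flipping a sign otherwise increases $E$) and that $m_1=m_2$ (via strict concavity of $-\mathcal I$ along the anti-diagonal direction).
\item If $\hat\alpha=0$, all four points $(\pm m^*,\pm m^*)$ maximize $G_{0,\beta}$, since it splits into a sum.
\end{itemize}
The large deviation bound then forces the law to concentrate on neighbourhoods of the maximizers. Symmetry under $\sigma\to-\sigma$ gives equal weights to $\bm^1,\bm^2$ and to $\bm^3,\bm^4$.

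\textbf{Step 3: relative weights (the main obstacle).} The remaining task is the relative weights, i.e.\ the ratio of the masses near $\bm^3$ and near $\bm^1$ under $\bar\mu$. This requires an exact local Gaussian (Laplace) computation rather than exponential-order estimates. Near each maximizer, write $\bm=\bm^k+x/\sqrt n$ and sum the local Gaussian. The mass ratio is
\[
\exp\Big\{\tfrac n2\big[G_{\alpha_n,\beta}(\bm^3_n)-G_{\alpha_n,\beta}(\bm^1_n)\big]\Big\}
\]
times a ratio of Hessian determinants. When $\alpha_n\to0$, both $\bm^1_n$ and $\bm^3_n$ are $O(\alpha_n)$-perturbations of $(m^*,\pm m^*)$. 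To first order, then, $G(\bm^3_n)-G(\bm^1_n)=\beta\alpha_n\big(-\tfrac{(m^*)^2}{2}-\tfrac{(m^*)^2}{2}\big)+O(\alpha_n^2)$, the correction being second order because $\bm^{1},\bm^{3}$ are critical points. Consequently the exponent is $-\frac{n\beta\alpha_n}{2}(m^*)^2(1+o(1))$ and the determinant ratio tends to $1$. With $c$ as in the statement, this exponent should match $-c$ after the normalization conventions of \eqref{eq:condition_ml} are taken into account, and I would check the constant carefully there. When $n\alpha_n\to\infty$ the ratio vanishes and the weight of $\bm^3,\bm^4$ disappears.

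There is a genuine difficulty in transferring this to $\mu_{n,\beta}$. Step 1 only gives an $e^{o(n)}$ comparison, which is too crude for $O(1)$ weight ratios. I would therefore refine Step 1 by showing that $\Delta_n$ is asymptotically the same near $\bm^1$ and $\bm^3$. Averaging over $\sigma$ with fixed magnetization, the contribution of $\Delta_n$ to the restricted partition function (a ratio of averages of $e^{-\beta\Delta_n}$ under uniform measure on the magnetization slice) is $1+o(1)$ a.s. I would prove this by a second-moment computation, exploiting that the fluctuation of $\Delta_n$ is $O(1/\sqrt{np})$ in typical directions. This is the step I expect to be hardest. An alternative route is to note that the limit points of the weights must be flip-invariant, and to control the disorder through the mean degree, which concentrates.
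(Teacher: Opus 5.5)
Your Steps~1--2 follow the paper's route: Chernoff/Bernstein bounds with a union bound over $2^n$ configurations and Borel--Cantelli (Lemmas~\ref{lemma1}--\ref{lem:Delta_n}), the variational formula, and the critical-point analysis. Your Laplace computation in Step~3 is the paper's Proposition~\ref{Z_approx}. Together these correctly give (i), (ii)(a) and the support of the limit in (ii)(b).

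You are also right that an $e^{o(n)}$ comparison cannot determine $O(1)$ weights, and the paper has the same hole. The factor $e^{\pm\frac32 n\rho_n}$ in Lemma~\ref{lem:FN_FNCW} has $n\rho_n\ge 3\sqrt{n/p}\to\infty$. So the one-line transfer at the end of Section~\ref{subsec:convergence_mag} only settles (ii)(b) when $\hat\alpha>0$ or $\alpha_n\gg\rho_n$, using also $\sigma\mapsto-\sigma$ for the equal weights of $\bm^1,\bm^2$.

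Your Step~3 still has two genuine gaps. The first is the constant. Your expansion is right: $\frac n2\varepsilon(\alpha_n)\to\kappa:=\frac{\beta(m^*)^2}{2}\,c$, which is exactly Lemma~\ref{lem:epsilonalpha}. No normalization in \eqref{eq:condition_ml} turns $\kappa$ into $c$. The weights are therefore $\frac{1}{2(1+e^{-\kappa})}$ and $\frac{e^{-\kappa}}{2(1+e^{-\kappa})}$. The paper's \eqref{eq:limitD} silently writes $e^{-c}$ for $\lim e^{-\frac n2\varepsilon(\alpha_n)}$, so the statement as written is off by the factor $\beta(m^*)^2/2$ in the exponent. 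You should conclude this rather than promise to match $-c$.

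The second gap is your repair of the transfer, which would fail as proposed. For fixed $\sigma$, $\beta\Delta_n(\sigma)$ has variance about $\beta^2(1-p+\alpha_n)/(4p)$, dominated by the internal edges. It therefore fluctuates on scale $p^{-1/2}$, not $(np)^{-1/2}$.

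More seriously, at low temperature a restricted partition function does not concentrate even after dividing by its mean. In its second moment, the overlap term of Lemma~\ref{lemma:T}(ii) contributes about $\frac{\beta^2(1-p)}{8p}\|\bm^{(n)}(\sigma\tau)\|^2$ to the exponent. For $\sigma,\tau$ near $\bm^1$ one has $\|\bm^{(n)}(\sigma\tau)\|^2\approx 2m_g^4$. Hence the ratio of second moment to squared first moment stays bounded away from $1$ unless $p\to1$.

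The missing idea is a mechanism that cancels the internal disorder between the two regions. Flipping all spins in $V_2$ leaves the internal part of $H_n$ invariant. It maps $U=\{\bm^{(n)}\text{ near }(m^*,m^*)\}$ onto $U'=\{\bm^{(n)}\text{ near }(m^*,-m^*)\}$. Hence $\mu_{n,\beta}(U')/\mu_{n,\beta}(U)$ equals the average of $e^{-2X}$ under $\mu_{n,\beta}(\cdot\,|\,U)$, where $X(\sigma)=\frac{\beta}{np}\sum_{\cE_E}\zeta_{ij}\sigma_i\sigma_j$.

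If $n\alpha_n\to c$, then $|X|\le\frac{\beta}{np}\sum_{\cE_E}\zeta_{ij}=O(1)$. So $\mu_{n,\beta}$ has bounded density with respect to the decoupled measure with external edges removed, and that measure is independent of the $\zeta_{ij}$. Under it, averaged over the $\zeta_{ij}$, the second moment of $X-\frac{\beta n\alpha_n}{4}m_1m_2$ is at most $\frac{\beta^2\alpha_n}{4p}=O(\frac{1}{np})$. So the ratio tends to $e^{-\kappa}$ in $\mathbb{P}$-probability; the almost sure statement needs more.

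For $\alpha_n\gg 1/(np)$, a union bound on the external sum alone gives $\inf_U X\to\infty$, so the $\bm^3,\bm^4$ weights vanish. The window $1/n\ll\alpha_n\lesssim 1/(np)$ still needs an argument that neither you nor the paper gives.

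A minor point in Step~2: concavity of $-\mathcal I$ along the anti-diagonal does not force $m_1=m_2$, because the energy is convex in that direction. Compare instead with $(M,M)$, $M=\sqrt{(m_1^2+m_2^2)/2}$, using $\frac{\alpha}{2}(m_1^2+m_2^2)\ge\alpha m_1m_2$ and the convexity of $u\mapsto\mathcal I(\sqrt u)$.
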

\begin{remark}[The role of $\alpha_n$]
The phase diagram described in Theorem~\ref{thm_convergence} is governed by the structure of the energy landscape, 
and in particular by the interplay between global and local maxima of the free energy functional. 
 The parameter $\alpha_n$ plays a crucial role in this picture, as it controls the interaction 
 between the two communities, and therefore the relative stability of these maxima. 
 To make this precise, we will show in Lemma \ref{lem:epsilonalpha} that, 
 as $n \to \infty$, the difference between the free energy functional at global and local maxima satisfies
 the following expansion in $\alpha_n\to 0$,
$$
\varepsilon(\alpha_n) = G_{\alpha_n,\beta}(\bm^1) - G_{\alpha_n,\beta}(\bm^3) = (m^*)^2\beta \, \alpha_n + o(\alpha_n)\,.
$$
In addition, in \eqref{eq:expansion_mstar} we obtain the expansions
\begin{align*}
m_g(\alpha_n)= m^* + \frac{\frac{\beta}{2}m^*}
{\frac{1}{1-(m^*)^2}-\frac{\beta}{2}}\alpha_n + o(\alpha_n), \qquad 
m_l(\alpha_n)= m^* -\frac{\frac{\beta}{2}m^*}
{\frac{1}{1-(m^*)^2}-\frac{\beta}{2}}\alpha_n + o(\alpha_n).
\end{align*}
As a consequence, when $\alpha_n \to 0$, both $m_g(\alpha_n)$ and $m_l(\alpha_n)$ converge to $m^*$, and asymptotically
they all correspond to the global maximum.
However, at finite $n$, the contribution to the Gibbs measure around each maximum is proportional to $e^{- n \varepsilon(\alpha_n)}$, 
so that the rate at which $\alpha_n$ vanishes determines the concentration properties of the measure. 
This mechanism explains the different regimes described in the theorem.
\end{remark}

\begin{remark}[Extremal cases]
In the extremal case $\alpha_n \equiv 0$, the model reduces to two independent Erd\"{o}s--R\'enyi CW communities on $n/2$ vertices, whose deterministic counterpart is described in \cite{C14}: for $\beta \le 2$ the unique maximizer is $(0,0)$, 
while for $\beta > 2$ four global maximizers (both symmetric and anti-symmetric) emerge.
At the opposite extreme, when $\alpha_n \equiv 1$, the model coincides with the Erd\"{o}s--R\'enyi CW model on $n$ vertices, and the classical picture is again recovered: for $\beta \le 1$ the unique maximizer is $(0,0)$, while for $\beta >1$ two symmetric global maximizers emerge (see \cite{BG93} for further details).\end{remark} 

\begin{remark}[Comparison with \cite{LL26}]
We note that the recent work \cite{LL26} analyzes a similar setting, 
deriving comparable results under the assumptions $\alpha_n \gg 1/n$ and $\alpha_n \ll 1/n$.
However, it does not consider the regime in which 
$\displaystyle{\lim_{n \to \infty}} n\alpha_n = c > 0$. 
This scaling is of particular interest, as the difference between global 
and local maxima vanishes as $\alpha_n \to 0$, while their contribution to the Gibbs weights remains non-negligible, 
being of order $n\alpha_n$. 
This behavior is reflected in the convergence in law of the magnetization, 
where different maximizers appear with non-trivial and distinct weights, as described in case~(ii-b).
\end{remark}

In the uniqueness region of the phase diagram, the above quenched distributional convergences 
can be straightened to the following quenched strong law of large numbers for $\bm^{(n)}$.  
\begin{corollary}[SLLN for $\bm^{(n)}$]
\label{cor}
Let $\beta\le \beta_c$. Under assumptions \eqref{assump}, almost surely with respect to $\mathbb{P}$,
\begin{equation*}
\bm^{(n)} \overset{a.s.}{\longrightarrow} \bm^0 \qquad \mbox{w.r.t. } \mu_{n,\beta}, \mbox{ as }n\to\infty.
\end{equation*}
\end{corollary}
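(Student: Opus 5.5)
The interpretation is the standard one for Curie--Weiss-type SLLN statements: the measures $\mu_{n,\beta}$ live on different spaces, so we regard them as coupled on a common probability space (for instance, the product $\prod_n \mathcal{X}_n$ with product measure $\otimes_n \mu_{n,\beta}$ for fixed $\omega$). Almost sure convergence then follows from Borel--Cantelli once we show that, for every $\varepsilon>0$ and $\mathbb{P}$-a.e.\ $\omega$,
\begin{equation*}
\sum_{n} \mu_{n,\beta}\bigl(\|\bm^{(n)}\|>\varepsilon\bigr)<\infty .
\end{equation*}
Convergence in law to $\delta_{\bm^0}$, given by Theorem~\ref{thm_convergence}(ii-a), alone does not give summability. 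So the plan is to upgrade it to an exponential (or at least summable) tail bound, reusing the machinery behind the theorem.

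\emph{Step 1 (comparison with the bipartite CW model).} We write $H_n(\s)=\bar H_n(\s)+\Delta_n(\s)$. By the concentration argument underlying part (i) of Theorem~\ref{thm_convergence}, $\mathbb{P}$-a.s.\ we have $\sup_{\s}|\Delta_n(\s)|=o(n)$. The proof uses Bernstein's inequality for the centered sums $\sum(\xi_{ij}-p)\s_i\s_j/(np)$, a union bound over the $2^n$ configurations (possible since $np\to\infty$), and Borel--Cantelli over $n$. It follows that
\begin{equation*}
\mu_{n,\beta}(A)\le e^{2\beta\sup|\Delta_n|}\,\bar\mu_{n,\beta}(A)=e^{o(n)}\bar\mu_{n,\beta}(A)
\end{equation*}
for every event $A$.

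\emph{Step 2 (large deviations for the CW model).} Using the explicit form \eqref{def:hamiltonian_CW_magnet}, we have
\begin{equation*}
\bar\mu_{n,\beta}(\bm^{(n)}=\bm)=\bar Z_{n,\beta}^{-1}\binom{n/2}{\cdot}\binom{n/2}{\cdot}e^{\frac{n\beta}{2}E_{\alpha_n}(\bm)}.
\end{equation*}
Stirling's formula and the polynomial cardinality of $\Gamma_{n/2}^2$ then give
\begin{equation*}
\bar\mu_{n,\beta}(\|\bm^{(n)}\|>\varepsilon)\le \mathrm{poly}(n)\exp\Bigl(\tfrac n2\bigl[\sup_{\|\bm\|>\varepsilon}G_{\alpha_n,\beta}(\bm)-\sup_{\bm} G_{\alpha_n,\beta}(\bm)\bigr]\Bigr).
\end{equation*}
For $\beta<\beta_c$, the unique maximizer is $\bm^0$ (Section~3), and by continuity in $\alpha$ the gap is at least some $\delta(\varepsilon)>0$ uniformly for large $n$. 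Combined with Step~1, this gives $\mu_{n,\beta}(\|\bm^{(n)}\|>\varepsilon)\le e^{-n\delta/2+o(n)}$, which is summable.

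\emph{Main obstacle: the critical case $\beta=\beta_c$ and the uniformity in $\alpha_n$.} At $\beta=\beta_c=2/(1+\hat\alpha)$ the maximizer $\bm^0$ is still unique, but it is degenerate: $G$ is quartically flat along the direction $(1,1)$. Still, on $\{\|\bm\|>\varepsilon\}$ the gap remains strictly positive for the limiting functional $G_{\hat\alpha,\beta_c}$. The delicate point is that $\alpha_n\to\hat\alpha$ may approach from below with $\beta>2/(1+\alpha_n)$ at finite $n$. I would handle this by observing that $G_{\alpha,\beta}$ is jointly continuous in $(\alpha,\bm)$ on the compact set $\{\|\bm\|\ge\varepsilon\}$. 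Hence
\begin{equation*}
\sup_{\|\bm\|\ge\varepsilon}G_{\alpha_n,\beta}\to\sup_{\|\bm\|\ge\varepsilon}G_{\hat\alpha,\beta}<0=G_{\alpha_n,\beta}(\bm^0)\le\sup G_{\alpha_n,\beta},
\end{equation*}
which yields a uniform positive gap $\delta(\varepsilon)$ for large $n$. The strict inequality $\sup_{\|\bm\|\ge\varepsilon}G_{\hat\alpha,\beta}<0$ uses uniqueness of the maximizer $\bm^0$ at $\beta\le\beta_c$, established in the phase-diagram analysis of Section~3. The only remaining care is that the $o(n)$ error from Step~1 is a.s.\ eventually smaller than $n\delta/4$, which holds by construction. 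This closes the Borel--Cantelli argument.
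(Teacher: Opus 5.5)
Your proposal is correct and follows essentially the same route as the paper: an exponential tail bound for $\bar\mu_{n,\beta}(\|\bm^{(n)}\|>\varepsilon)$ in the bipartite CW model (the paper's \eqref{ExpProb}), transferred to $\mu_{n,\beta}$ through the $\mathbb{P}$-a.s.\ uniform bound $\sup_\s|\Delta_n(\s)|=o(n)$ (Lemmas \ref{lem:Delta_n}, \ref{lem:omega*}, \ref{lem:FN_FNCW}), and then Borel--Cantelli; your joint-continuity argument, which gives a uniform gap at $\beta=\beta_c$ even when $2/(1+\alpha_n)\neq\beta_c$, is in fact more careful than the paper's treatment of that case. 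Two harmless slips: $G_{\alpha,\beta}(\bm^0)=2\log 2$ rather than $0$, and $\beta_c>2/(1+\alpha_n)$ occurs when $\alpha_n$ approaches $\hat\alpha$ from above, not from below.
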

The proof of this corollary relies on a classical argument of exponentially fast convergence and is postponed to Section \ref{subsec:convergence_mag}, together with the proof of Theorem \ref{thm_convergence}.
\\

Our second main result concerns the fluctuations of the magnetization in the high-temperature regime when $\beta < \beta_c$. 
\begin{theorem}[Quenched CLT for the magnetization]
\label{thm:CLT}
Let $\beta < \beta_c$. Under assumptions \eqref{assump}, 
$$
\mathcal{L}(\sqrt{n} \,\bm^{(n)}) \underset{n\to\infty}{\longrightarrow} \mathcal{N}(0, \Sigma) \qquad \text{in } \mathbb{P}-\text{ probability},
$$
where $\mathcal{N}(0, \Sigma)$ is a two-dimensional normal random vector with covariance matrix
\begin{equation}
\label{def:sigma}
\Sigma = \frac{2-\beta}{(1-\beta) + \frac{\beta^2(1-\hat\alpha^2)}{4}} 
\begin{pmatrix} 
1 & \frac{\beta\hat\alpha}{2-\beta} \\
\frac{\beta\hat\alpha}{2-\beta} & 1
\end{pmatrix}.
\end{equation}
\end{theorem}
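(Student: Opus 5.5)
The covariance in \eqref{def:sigma} is what the Gaussian approximation of the bipartite CW weight predicts. Write the weight of a magnetization vector as $\exp\{\tfrac n2(\beta E_{\alpha}(\bm)-\mathcal I(\bm))\}$ and use $\mathcal I(m_i)\approx m_i^2/2$. In the variable $x=\sqrt n\,\bm$ this gives the quadratic form $\exp\{-\tfrac12 x^{T}Ax\}$ with
\[
A=\tfrac12\begin{pmatrix}1-\beta/2 & -\beta\hat\alpha/2\\ -\beta\hat\alpha/2 & 1-\beta/2\end{pmatrix}.
\]
Its inverse is exactly $\Sigma$, and $A$ is positive definite precisely when $\beta<\beta_c$. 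I would therefore prove the theorem in two steps. First, an annealed (deterministic) CLT for $\bar\mu_{n,\beta}$. Second, a transfer to the random measure $\mu_{n,\beta}$ through the quenched moment generating function, by a second moment method on partition functions.

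\emph{Step 1 (bipartite CW).} For $h\in\R^2$ set $\bar Z_{n,\beta}(h)=\sum_\sigma e^{-\beta\bar H_n(\sigma)+\langle h,\sqrt n\,\bm^{(n)}(\sigma)\rangle}$. Via a two-dimensional Hubbard--Stratonovich transform, or directly from the local CLT for the binomial counts of each community plus a Laplace expansion around $\bm^0$, I would show
\[
\frac{\bar Z_{n,\beta}(h)}{\bar Z_{n,\beta}(0)}\to e^{\frac12 h^T\Sigma h}
\]
for all $h$ in a neighbourhood of $0$. The tails away from $\bm^0$ are killed exponentially because $\bm^0$ is the unique maximizer of $G_{\hat\alpha,\beta}$ for $\beta<\beta_c$ (Theorem~\ref{thm_convergence}); nondegeneracy of the Hessian $A$ gives the Gaussian limit. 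Replacing $\alpha_n$ by $\hat\alpha$ costs $o(1)$ in the exponent on the $\sqrt n$ scale.

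\emph{Step 2 (first moment).} Define $Z_{n,\beta}(h)$ analogously for the random Hamiltonian. For fixed $\sigma$, independence of the edges gives
\[
\E\big[e^{-\beta H_n(\sigma)}\big]=e^{-\beta\bar H_n(\sigma)}\prod_{e}\big(1+q_e(e^{\beta\sigma_i\sigma_j/(np)}-1)\big)e^{-q_e\beta\sigma_i\sigma_j/(np)},
\]
where $q_e\in\{p,\alpha_np\}$. Expanding to second order, the correction equals $\exp\{\sum_e q_e(1-q_e)\beta^2/(2n^2p^2)+O(n^2/(np)^3\cdot p)\}$. Since $\sigma_i^2\sigma_j^2=1$, this correction is a constant $C_n$ independent of $\sigma$, and the error is $o(1)$ because $np\to\infty$. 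Hence $\E Z_{n,\beta}(h)=C_n\bar Z_{n,\beta}(h)(1+o(1))$ uniformly in $\sigma$ and in $h$ in compacts.

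\emph{Step 3 (second moment, main obstacle).} Write
\[
\E Z_{n,\beta}(h)^2=\sum_{\sigma,\tau}e^{-\beta\bar H_n(\sigma)-\beta\bar H_n(\tau)+\dots}\prod_e(\cdots),
\]
where the edge product now produces the factor $\exp\{\beta^2\sum_e q_e\sigma_i\sigma_j\tau_i\tau_j/(n^2p^2)\}$. This factor is a function of the block overlaps $r_k=\frac{2}{n}\sum_{i\in V_k}\sigma_i\tau_i$, of order $\exp\{\beta^2(r_1^2+r_2^2+2\alpha_n r_1r_2)/(8p)\}$ up to constants. I must show that
\[
\frac{\E Z_{n,\beta}(h)^2}{(\E Z_{n,\beta}(h))^2}\to1.
\]
The plan is to run a two-replica large deviation/Laplace argument in the variables $(\bm(\sigma),\bm(\tau),r_1,r_2)$. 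The entropy penalty for the overlaps is of order $n\|r\|^2$, whereas the interaction gain is $\|r\|^2/p=o(n\|r\|^2)$ since $np\to\infty$. Hence for overlaps of order one the gain is negligible, and on the scale $r\sim n^{-1/2}$ the exponent is $O(1/(np))\to0$. The high-temperature condition $\beta<\beta_c$ is needed so that the product measure of two replicas still concentrates at $\bm=0$; there the overlap of independent replicas is $O(n^{-1/2})$. The delicate part is to make the Laplace estimate uniform when $p\to0$ at an arbitrary rate subject to $np\to\infty$. I would split into $\|r\|\le\delta$, handled by a Gaussian bound, and $\|r\|>\delta$, handled by a crude exponential bound. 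Chebyshev then gives $Z_{n,\beta}(h)/\E Z_{n,\beta}(h)\to1$ in $\mathbb P$-probability for each $h$.

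\emph{Step 4 (conclusion).} Combining Steps 2 and 3 yields $Z_{n,\beta}(h)/Z_{n,\beta}(0)\to e^{\frac12 h^T\Sigma h}$ in probability for every fixed $h$ near $0$. Taking a countable dense set of $h$ and a diagonal subsequence argument, every subsequence has a further subsequence along which the quenched Laplace transforms converge a.s. on a neighbourhood of $0$. This identifies the limit $\mathcal N(0,\Sigma)$ by convexity and continuity of moment generating functions, and therefore gives convergence in law in $\mathbb P$-probability.
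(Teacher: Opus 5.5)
Your overall architecture is sound in principle: first and second moments of a generalized partition function, with exponential tilts $e^{\langle h,\sqrt n\,\bm^{(n)}\rangle}$ in place of $\varphi\in C_b(\R^2)$, and the limit identified through Laplace transforms. However, Step 3 fails in part of the regime allowed by \eqref{assump}.

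\textbf{Where Step 3 breaks.} When you expand the two-replica edge factor you keep only the term $q_e\gamma^2x_ey_e$, where $\gamma=\beta/(np)$, $x_e=\sigma_i\sigma_j$ and $y_e=\tau_i\tau_j$. For $\xi\sim\mathrm{Ber}(q)$ and $x,y\in\{\pm1\}$, write
\[
\log\frac{\E[e^{\gamma\xi(x+y)}]}{\E[e^{\gamma\xi x}]\,\E[e^{\gamma\xi y}]}=d_0+d_1(x+y)+d_{12}\,xy .
\]
Then $d_{12}\approx q\gamma^2$, which is your term. But there is also a constant $d_0=\kappa_4(q)\gamma^4/4\approx q\gamma^4/4$, where $\kappa_4$ is the fourth cumulant of $\xi$. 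Summed over all edges this gives a factor independent of $\sigma$ and $\tau$. The remaining overlap factor is at least $e^{-o(1)}$, since $d_{12}>0$ and $\alpha_n\le 1$. Hence
\[
\frac{\E[Z_{n,\beta}(h)^2]}{(\E Z_{n,\beta}(h))^2}\;\ge\;\exp\Big\{\frac{(1+\alpha_n)\beta^4}{16\,n^2p^3}\,(1+o(1))\Big\}.
\]
This diverges when $p\ll n^{-2/3}$ and stays bounded away from $1$ when $n^2p^3$ is bounded.

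This is not an artefact of Chebyshev. Since $e^{\gamma\xi x}=\cosh(\gamma)^{\xi}(1+\xi x\tanh\gamma)$ for $\xi\in\{0,1\}$, you have $Z_{n,\beta}(h)=\cosh(\gamma)^{N}\,Y_n(h)$, where $N=\sum_{\cE_I}\xi_{ij}+\sum_{\cE_E}\zeta_{ij}$ is the total number of edges. The quantity $N\log\cosh\gamma$ has standard deviation of order $\beta^2/(np^{3/2})$. So $Z_{n,\beta}(h)/\E Z_{n,\beta}(h)$ cannot converge to $1$ in probability unless $n^2p^3\to\infty$.

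The real obstruction is therefore the fluctuation of the edge count, not the uniformity of the overlap Laplace estimate you flag. That part is fine, because the overlap gain $\beta^2\|r\|^2/(8p)$ is $o(n\|r\|^2)$.

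\textbf{The missing idea} is the tilt the paper uses. Replace $Z_{n,\beta}(h)$ by
\[
\widetilde Z_{n,\beta}(h)=Z_{n,\beta}(h)\cosh(\gamma)^{-N}=Y_n(h).
\]
This factor cancels in $Z_{n,\beta}(h)/Z_{n,\beta}(0)$. After the tilt, each single-replica edge factor becomes $1+q_ex_e\tanh\gamma$, and the two-replica factor becomes $1+q_e(x_e+y_e)\tanh\gamma+q_ex_ey_e\tanh^2\gamma$. The $\sigma,\tau$-independent constant then drops to $O(n^2p^2\gamma^4)=O((np)^{-2})\to 0$. With that change, your overlap analysis and your Step 4 go through under $np\to\infty$ alone.

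\textbf{A smaller inaccuracy in Step 2.} The third-order error $O(n^2p\gamma^3)=O(1/(np^2))$ is not $o(1)$ under $np\to\infty$ alone. You need to split it into two parts. The $\sigma$-independent part cancels in ratios. The $\sigma$-dependent part has size $O(\|\bm^{(n)}(\sigma)\|^2/(np^2))$, which is negligible on the $\sqrt n$ scale and dominated by the Gaussian decay elsewhere, as in Lemma~\ref{lemma:T}(i).
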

\begin{remark}[Relation with previous results]
Theorem \ref{thm:CLT} generalizes to the two-community setting, the quenched central limit theorem for the magnetization of the Erd\"{o}s--R\'enyi CW model first proved in \cite{KLS19} and later improved in \cite{KLS20}.  
Note that the covariance matrix reduces to a diagonal matrix when $\hat\alpha = 0$, namely when the inter-community interaction vanishes,while when $\hat\alpha = 1$, it matches the variance term obtained in \cite{KLS20} for the Erd\"{o}s--R\'enyi CW model. 
We emphasize that similar results have recently been obtained in \cite{DM23} for a broad class of models that includes the present two-community setting. However, our results rely on weaker assumptions on the asymptotic behavior of $p_n$ as $n \to \infty$, as specified in \eqref{assump}.
\end{remark}

\begin{remark}[Fluctuations at the critical temperature]
As $\beta$ approaches the critical value $\beta_c = 2/(1+\hat{\alpha})$, the covariance matrix $\Sigma$ in \eqref{def:sigma} becomes singular, signaling a breakdown of the central limit theorem. Indeed, the system exhibits a non-standard behavior which becomes evident, with some further computations, when $\hat\alpha >0$. In that case, while the fluctuations of the difference $m_1 - m_2$ still scale as $n^{-1/2}$ and remain Gaussian, the fluctuations of the sum $m_1 + m_2$ are much larger, scaling as $n^{-1/4}$, and their limiting distribution is non-Gaussian. In analogy with the results for the Erd\"{o}s--R\'enyi CW model in \cite{KLS20}, we expect the limiting behavior to be determined by a delicate interplay between the sparsity of the graph, specifically the scaling of $p$, and the rate of convergence of the interaction parameter $\alpha_n \to \hat{\alpha}$. A rigorous investigation within this double-scaling window, which requires a much more involved analysis of the free energy landscape, goes beyond the scope of this paper and is left for future work.
\end{remark}

\subsection{Outlines of the proofs and main ingredients}
The proofs of Theorem \ref{thm_convergence} and Corollary \ref{cor} are based on the idea that the random Gibbs measure  
concentrates around the Gibbs measure of a bipartite CW model. 
Consequently, the first part of the analysis is devoted to the study of the Gibbs measure of the bipartite CW model and of its associated free energy functional, including the characterization of its global and local maximizers (see Section \ref{sec:CW}). 
In particular, the asymptotic behavior of the magnetization vector is obtained via a Taylor expansion of the free energy functional around its maximizers, combined with an analysis of the induced Gibbs measure on the magnetization space.
The second step consists in proving the aforementioned concentration (see Section \ref{sec:concentration}). 
This is achieved by comparing the original Hamiltonian $H_n$ with the averaged Hamiltonian $\bar H_n$, 
corresponding to the bipartite CW model. By controlling the difference $\Delta_n = H_n - \bar H_n$, 
one derives suitable exponential bounds, which imply that the Gibbs measures of the two systems are exponentially close. As a consequence, the free energy of the original system converges $\mathbb{P}$-almost surely to that of the bipartite CW model, 
and all limiting properties identified for the latter carry over to the original system.

Finally, to establish the results in Theorem \ref{thm:CLT}, we study the expectation of test functions of the rescaled magnetization under the Gibbs measure, expressed as a ratio of generalized and standard partition functions (see Section \ref{sec:fluctuations}). By introducing a suitably tilted partition function, this ratio can be decomposed into factors that can be analyzed separately: one factor captures the limiting behavior, while the others converge to one, allowing the identification of the limiting law.

%%%%%%%%%%%%%%%%%%%%%%%%%%%%%%%%%%%%%%%%%%%%%%%%%%%%%%%%%%%%%%%%%%%%%%%%%%%%%%%%%%%%%%%%%%%%%%%%%%%%%%%

\section{Bipartite Curie-Weiss model}
\label{sec:CW}

\subsection{Model and phase diagram}
The bipartite CW model is defined by the probability measure
\begin{align}\label{def:gibbs_CW}
    \bar{\mu}_{n,\beta}(\sigma)=\frac{e^{-\beta \bar{H}_n(\sigma)}}{\bar{Z}_{n,\beta}},
\end{align}
where $\bar{H}_n(\sigma)$ is defined in \eqref{def:hamiltonian_CW}.
Recall also the definition of free energy given in \eqref{free_energy_CW}. 

 The following main result echoes Theorem \ref{thm_convergence} and provides the asymptotic distribution of the magnetization, hence characterizing the phase diagram of the model. 
\begin{theorem}[Convergence of the magnetization law for CW]\label{thm_convergence_CW}
Let $\alpha_n>0$ and let $\lim_{n \to \infty} \alpha_n = \hat \alpha \in [0,1]$. For $\beta>0$, a phase transition in the uniqueness of the infinite-volume Gibbs measure occurs at 
$\beta_c = \frac{2}{1+\hat{\alpha}}$.
More precisely:
\begin{itemize}
\item[(a)] If $\beta \le \beta_c$, then
$\lim_{n \to \infty} \mathcal{L}(\bm^{(n)}) = \delta_{\bm^0}$.

\item[(b)] If $\beta > \beta_c$, then
\[
\lim_{n \to \infty} \mathcal{L}(\bm^{(n)}) =
\begin{cases}
\frac{1}{2} \left( \delta_{\bm^1} + \delta_{\bm^2} \right), & \text{if } \alpha_n \gg \frac{1}{n}, \\[6pt]
\frac{1}{2(1+e^{-c})} \left( \delta_{\bm^1} + \delta_{\bm^2} \right)
+ \frac{e^{-c}}{2(1+e^{-c})} \left( \delta_{\bm^3} + \delta_{\bm^4} \right),
& \text{if } \alpha_n \lesssim \frac{1}{n},
\end{cases}
\]
where $c = \lim_{n \to \infty} n \alpha_n \ge 0$.
\end{itemize}
\end{theorem}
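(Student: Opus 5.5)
We write the law of $\bm^{(n)}$ under $\bar\mu_{n,\beta}$ explicitly on the lattice $\Gamma_{n/2}^2$:
\[
\bar\mu_{n,\beta}\big(\bm^{(n)}=\bm\big)=\frac{1}{\bar Z_{n,\beta}}\binom{n/2}{\frac{n}{4}(1+m_1)}\binom{n/2}{\frac{n}{4}(1+m_2)}\,e^{\frac{\beta n}{2}E_{\alpha_n}(\bm)} .
\]
By Stirling's formula each binomial equals $\exp\{-\tfrac n2\mathcal I(m_i)+O(\log n)\}$, with an explicit prefactor $\sqrt{2/(\pi n(1-m_i^2))}$ away from $m_i=\pm1$. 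Hence the weight of $\bm$ is $\exp\{\tfrac n2 G_{\alpha_n,\beta}(\bm)\}$ times a polynomial prefactor.

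The first step is the variational analysis of $G_{\alpha,\beta}$. Critical points solve $\mathrm{atanh}(m_1)=\frac\beta2(m_1+\alpha m_2)$ and $\mathrm{atanh}(m_2)=\frac\beta2(m_2+\alpha m_1)$. The Hessian at $\bm^0$ is $\frac\beta2\begin{pmatrix}1&\alpha\\ \alpha&1\end{pmatrix}-I$, with eigenvalues $\frac\beta2(1\pm\alpha)-1$. Thus $\bm^0$ is a nondegenerate maximum exactly when $\beta<2/(1+\alpha)$.

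\begin{itemize}
\item For $\beta\le\beta_c$ we show that $\bm^0$ is the unique global maximizer of $G_{\hat\alpha,\beta}$. Using $\mathcal I(m)\ge m^2/2+m^4/12$ and $E_\alpha(\bm)\le\frac{1+\alpha}{4}\|\bm\|^2$, we get $G<0$ away from the origin, including at $\beta=\beta_c$ thanks to the quartic term.
\item For $\beta>\beta_c$ we show that every critical point satisfies $|m_1|=|m_2|$. Subtracting the two equations gives $\mathrm{atanh} m_1-\frac\beta2(1-\alpha)m_1=\mathrm{atanh}m_2-\frac\beta2(1-\alpha)m_2$, and a monotonicity/concavity argument on this odd function then forces $|m_1|=|m_2|$. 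This leaves the symmetric solutions $\pm(m_g,m_g)$ and the antisymmetric solutions $\pm(m_l,-m_l)$, defined by $\mathrm{atanh}(m)=\frac\beta2(1\pm\alpha)m$.
\item We then compare values: $G(\bm^1)-G(\bm^3)=\varepsilon(\alpha_n)>0$ for $\alpha_n>0$, with the expansion $\varepsilon(\alpha_n)=(m^*)^2\beta\alpha_n+o(\alpha_n)$ (Lemma \ref{lem:epsilonalpha}, proved by the envelope theorem since $\partial_\alpha G=\frac\beta2 m_1m_2$).
\item We check that the Hessians at $\bm^1,\ldots,\bm^4$ are negative definite, uniformly in $n$.
\end{itemize}

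Next comes the Laplace asymptotics. Fix $\delta>0$.
\begin{itemize}
\item Outside the $\delta$-balls around the relevant maximizers, $G_{\alpha_n,\beta}\le \max G-\eta$. The number of lattice points is $O(n^2)$, so this region carries mass $O(n^{c'}e^{-n\eta/2})$ relative to the peaks.
\item Near a nondegenerate maximizer $\bm^k$, we Taylor-expand to second order. The sum over lattice points of spacing $4/n$ is a Riemann sum of a Gaussian, giving mass $C_k\,n^{0}\,e^{\frac n2 G(\bm^k)}(1+o(1))$. The constant $C_k$ comes from the Stirling prefactor and $\det(-\mathrm{Hess}\,G(\bm^k))^{-1/2}$.
\item By the symmetries $\bm\mapsto-\bm$ and $(m_1,m_2)\mapsto(m_1,-m_2)$ at $\alpha=0$, the constants satisfy $C_1=C_2$, $C_3=C_4$, and $C_1/C_3\to1$ as $\alpha_n\to0$. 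So the relative weight of $\bm^3$ to $\bm^1$ is $\exp\{-\tfrac n2\varepsilon(\alpha_n)\}(1+o(1))$.
\end{itemize}
This tends to $0$ when $n\alpha_n\to\infty$. When $n\alpha_n\to c$, it tends to $e^{-\frac{c}{2}(m^*)^2\beta}$. This has to be reconciled with the stated constant $e^{-c}$; I expect the normalization of $\varepsilon$ in Lemma \ref{lem:epsilonalpha} and the factor $\frac n2$ to combine to the paper's constant, and I would track it carefully. Normalizing the weights gives (b). For (a) with $\beta<\beta_c$, a single peak gives $\delta_{\bm^0}$.

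The main obstacle is twofold. First, the exactly critical case $\beta=\beta_c$: the Hessian at $\bm^0$ is degenerate, but the quartic lower bound on $\mathcal I$ still gives concentration at scale $n^{-1/4}$, which suffices for weak convergence to $\delta_{\bm^0}$. Second, the uniformity in $n$ of the Laplace expansion when $\alpha_n$ varies and $\bm^1,\bm^3$ merge continuously as $\alpha_n\to0$. The key point there is that the local and global peaks remain separated by a distance of order $m^*$, and the Hessians converge, so the local Gaussian approximations hold uniformly. I would handle this by working with $G_{\alpha_n,\beta}$ directly, using the uniform convergence $G_{\alpha_n,\beta}\to G_{\hat\alpha,\beta}$ in $C^3$, and using the implicit function theorem for $m_g(\alpha_n)$ and $m_l(\alpha_n)$.
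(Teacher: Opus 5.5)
Your overall route is the paper's: Stirling on $\Gamma_{n/2}^2$, Gaussian Riemann sums at nondegenerate peaks, and relative weight $e^{-\frac n2\varepsilon(\alpha_n)}$ for the antisymmetric peaks. Using fixed balls with uniform convergence, and the envelope theorem for $\varepsilon$, is fine. Two points need correcting.

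\textbf{The constant when $n\alpha_n\to c$.} Your limit $e^{-\frac n2\varepsilon(\alpha_n)}\to e^{-\frac{\beta c}{2}(m^*)^2}$ is correct, and no bookkeeping will turn it into $e^{-c}$. With exactly the paper's normalization, Lemma~\ref{lem:epsilonalpha} gives $\frac n2\varepsilon(\alpha_n)=\frac{\beta(m^*)^2}{2}\,n\alpha_n+o(n\alpha_n)$.

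You can check this independently. The inter-community factor of the Gibbs weight is $e^{\frac{\beta n\alpha_n}{4}m_1m_2}$. This is a uniformly bounded continuous tilt of two independent Curie--Weiss communities, whose magnetization vector converges to the uniform law on $(\pm m^*,\pm m^*)$. The tilt gives each antisymmetric point weight $e^{-\frac{\beta c}{2}(m^*)^2}$ relative to a symmetric one.

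The paper obtains $e^{-c}$ only at \eqref{eq:limitD}, where the limit of $\frac n2\varepsilon(c/n)$ is taken to be $c$, contradicting Lemma~\ref{lem:epsilonalpha}. Hence for $0<c<\infty$ the statement as written is false, except at the single $\beta$ where $\frac{\beta(m^*)^2}{2}=m^*\operatorname{arctanh}(m^*)=1$. What your argument proves is the statement with $c$ replaced by $\kappa=\frac{\beta(m^*)^2}{2}\,c$ in the weights. State that version rather than trying to reconcile.

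\textbf{Classification of critical points.} The claim that every critical point satisfies $|m_1|=|m_2|$ is false. When $\frac\beta2(1-\alpha)>1$, the odd function $h(m)=\operatorname{arctanh}(m)-\frac\beta2(1-\alpha)m$ first decreases and then increases on $(0,1)$, so $h(m_1)=h(m_2)$ does not force $m_1=\pm m_2$. Concretely, for $\beta>2$ the points $(\pm m^*,0)$ and $(0,\pm m^*)$ are nondegenerate saddles of $G_{0,\beta}$, and they persist as critical points of $G_{\alpha,\beta}$ for small $\alpha>0$.

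What you need is still true, but it must be obtained differently.
\begin{itemize}
\item For $\alpha>0$: the bound $\frac{\beta\alpha}{2}m_1m_2\le\frac{\beta\alpha}{4}(m_1^2+m_2^2)$ gives $G_{\alpha,\beta}(\bm)\le g(m_1)+g(m_2)$, where $g(m)=\frac{\beta(1+\alpha)}{4}m^2-\mathcal I(m)$, with equality iff $m_1=m_2$. Since $g$ is maximized exactly at $\pm m_g$ when $\beta(1+\alpha)>2$, the global maximizers are exactly $\pm(m_g,m_g)$. This settles $\hat\alpha>0$.
\item For $\hat\alpha=0$: follow your last paragraph. Note that $G_{\alpha_n,\beta}-G_{0,\beta}=\frac{\beta\alpha_n}{2}m_1m_2\to0$ in $C^2$, and the separable $G_{0,\beta}$ has exactly the four maximizers $(\pm m^*,\pm m^*)$. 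So there is a uniform gap outside fixed small balls around these points. Inside each ball, $G_{\alpha_n,\beta}$ is strictly concave, with $\bm^i$ its unique critical point.
\end{itemize}
The paper does not rule out other maximizers either; it only inspects Hessians at the symmetric and antisymmetric critical points.

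Two harmless slips. Since $\mathcal I(0)=-\log 2$, your quartic bound should read $\mathcal I(m)+\log 2\ge\frac{m^2}{2}+\frac{m^4}{12}$, and ``$G<0$'' should be $G<2\log 2$. The Stirling prefactor is $2/\sqrt{\pi n(1-m_i^2)}$.
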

It is worth emphasizing that closely related results were already established in \cite{C14} and subsequently extended in \cite{KT20, LS18} for a bipartite CW model with interaction parameters independent of $n$.
In contrast, the explicit dependence on $n$ considered here gives rise 
to novel phenomena that do not appear in \cite{C14, KT20, LS18}, and constitute 
the central focus of this work.

In the next section, we present a collection of preliminary results that play a key role in the proof of Theorem \ref{thm_convergence_CW}, while also being of independent interest.

\subsection{Preliminary results}
\subsubsection{The free energy}
We provide a variational characterization of the infinite-volume free energy. 
\begin{lemma}\label{lem:free_energy_CW} 
Let $\lim_{n\to\infty} \alpha_n = \hat \alpha \in [0,1]$ and $G_{\alpha,\beta}$ be the function defined in \eqref{def:free_energy_functional_CW}.
The infinite-volume free energy is equal to
\begin{align*}
    \bar{f}_{\beta}=\frac{1}{2 \beta}\sup_{\bm \in [-1,1]^2} G_{\hat\alpha,\beta}(\bm).
\end{align*}
\end{lemma}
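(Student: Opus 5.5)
We prove the statement by a direct Laplace-type computation on the magnetization space, in three steps.

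\emph{Step 1: reduce to a finite sum.} By \eqref{def:hamiltonian_CW_magnet}, the Boltzmann weight depends on $\sigma$ only through $\bm^{(n)}(\sigma)$. Grouping configurations by magnetization gives
\begin{align*}
\bar Z_{n,\beta}=\sum_{\bm\in\Gamma_{n/2}^2} \binom{n/2}{\frac{n}{2}\frac{1+m_1}{2}}\binom{n/2}{\frac{n}{2}\frac{1+m_2}{2}}\, e^{\frac{n\beta}{2}E_{\alpha_n}(\bm)} .
\end{align*}

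\emph{Step 2: estimate the binomial coefficients.} Stirling's formula, or the elementary bounds $\frac{1}{N+1}e^{-N\mathcal{I}(m)}\le \binom{N}{N(1+m)/2}\le e^{-N\mathcal{I}(m)}$ with $N=n/2$, hold uniformly in $m\in\Gamma_{n/2}$. Hence each summand lies between $(n/2+1)^{-2}e^{\frac n2 G_{\alpha_n,\beta}(\bm)}$ and $e^{\frac n2 G_{\alpha_n,\beta}(\bm)}$. The sum has $(n/2+1)^2$ terms, so bounding it by its largest term gives
\begin{align*}
\frac{1}{n\beta}\log\bar Z_{n,\beta}=\frac{1}{2\beta}\max_{\bm\in\Gamma_{n/2}^2}G_{\alpha_n,\beta}(\bm)+O\Big(\frac{\log n}{n}\Big).
\end{align*}

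\emph{Step 3: pass to the limit.} Two replacements are needed.

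First, replace $\alpha_n$ by $\hat\alpha$. Since $|m_1m_2|\le1$, we have $|G_{\alpha_n,\beta}(\bm)-G_{\hat\alpha,\beta}(\bm)|\le\frac{\beta}{2}|\alpha_n-\hat\alpha|$ uniformly on $[-1,1]^2$, and this error tends to $0$.

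Second, replace the maximum over the grid $\Gamma_{n/2}^2$ by the supremum over $[-1,1]^2$. The upper bound is trivial, since the grid is contained in the square. For the lower bound, $G_{\hat\alpha,\beta}$ is continuous, hence uniformly continuous, on the compact square, with $\mathcal{I}$ extended continuously to $\pm1$. The grid has mesh $4/n\to0$. So choosing a grid point within distance $4/n$ of a maximizer $\bm^\star$ gives $\max_{\Gamma_{n/2}^2}G_{\hat\alpha,\beta}\ge G_{\hat\alpha,\beta}(\bm^\star)-o(1)$.

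Combining the three steps shows that the limit $\bar f_\beta$ exists and equals $\frac{1}{2\beta}\sup_{[-1,1]^2}G_{\hat\alpha,\beta}$.

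The only mildly delicate point is that the binomial/entropy estimate must hold uniformly up to the boundary $m=\pm1$. The elementary bounds above cover this directly, since they avoid Stirling's asymptotics. Uniform continuity of $G$ on the closed square then handles the discretization. No concavity or knowledge of the maximizers is needed.
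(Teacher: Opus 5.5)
Your proof is correct and follows essentially the same route as the paper. Both arguments group configurations by magnetization, bound the binomial coefficients by $e^{-\frac{n}{2}\mathcal{I}(m_i)}$ up to polynomial factors, keep the dominant term among polynomially many summands, and pass to the limit using the uniform convergence $G_{\alpha_n,\beta}\to G_{\hat\alpha,\beta}$. Your version is slightly more careful: the type-class bounds hold uniformly up to $m=\pm1$, you spell out the uniform-continuity step from grid to square that the paper only calls standard, and you count the grid size correctly as $(n/2+1)^2$.
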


\begin{proof}
From \eqref{def:hamiltonian_CW_magnet}, we have
\begin{align*}
    \bar{Z}_{n,\beta}&= \sum_{\sigma\in \mathcal{X}_n} e^{-\beta \bar{H}_n(\sigma)} =\sum_{\bm=(m_1,m_2)} \binom{n/2}{\frac{1+m_1}{2} n/2} \binom{n/2}{\frac{1+m_2}{2} n/2} e^{\beta\frac{n}{2} E_{\alpha_n}(\bm)}.
\end{align*}
Notice that the binomial coefficients satisfy the general bounds, for some constants $c,C>0$,
\begin{equation}\label{StirBound}
\frac{c}{\sqrt{n}}e^{- \frac{n}{2}\mathcal{I}(m_i)}\le\binom{n/2}{\frac{1+m_i}{2} n/2}\le Ce^{- \frac{n}{2}\mathcal{I}(m_i)},\qquad \mbox{ for } i=1,2.
\end{equation}
Since we are interested in the behavior of the partition function on the exponential scale and since it is a sum of only $|\Gamma_{n/2}^2 |= (n+1)^2$ positive terms, we can estimate $\bar{Z}_{n,\beta}$ by keeping only its
dominant term, and get from the above estimates that
\begin{align*}
  \frac{c}{\sqrt{n}}\exp\left \{\frac{n}{2} \sup_{\bm \in [-1,1]^2} G_{\alpha_n,\beta} (\bm) \right \} \leq \bar{Z}_{n,\beta} 
    & \leq C(n+1)^2 \exp\left \{\frac{n}{2} \sup_{\bm \in [-1,1]^2} G_{\alpha_n,\beta} (\bm) \right \},
\end{align*}
where the lower bound is obtained by the continuity in $\bm$ of $G_{\alpha_n,\beta}(\bm)$ through a standard argument.
By the definition of free energy in \eqref{free_energy_CW}, taking the limit for $n\to\infty$ 
and using that $\lim_{n \to \infty}G_{\alpha_n,\beta}=G_{\hat\alpha, \beta}$ uniformly in $n$,
we finally get
\begin{align*}
    \bar{f}_{\beta} &= \frac{1}{\beta}\lim_{n \to \infty} \frac{1}{n} \log \bar{Z}_{n,\beta} =
    \frac{1}{2\beta}\lim_{n\to\infty} \sup_{\bm \in [-1,1]^2} G_{\alpha_n,\beta} (\bm) 
    = \frac{1}{2\beta}\sup_{\bm \in [-1,1]^2} G_{\hat\alpha,\beta} (\bm).
\end{align*}
\end{proof}

\subsubsection{Maxima of the free energy functional}
In view of Lemma \ref{lem:free_energy_CW}, the analysis of the maximal values of  $G_{\alpha,\beta}(\bm)$ becomes central to providing a precise asymptotic formula for the partition function and then establishing proper limit theorems for magnetization.
\begin{proposition}\label{proposition:minima1}
Let $\beta>0$ and $\alpha \in (0,1]$. Define
\begin{align}\label{def:betastar}
    \beta^* (\alpha)= \frac{2}{1-\alpha} \frac{\operatorname{arctanh}(t_*)}{t_*},
\end{align}
where $t_* \in (0,1)$ is such that
\begin{align}\label{def:tstar} 
   (1-t_*^2)\frac{\operatorname{arctanh}(t_*)}{t_*}=\frac{1-\alpha}{1+\alpha}.
\end{align}
Then the functional $G_{\alpha,\beta}(\bm)$ defined in \eqref{def:free_energy_functional_CW} admits one, two, or four local maximizers. In particular:
    \begin{itemize}
        \item[(i)] for $\beta\le \frac{2}{1+\alpha}$, $\bm^0$ is the unique maximizer;
        \item[(ii)] for $\beta \in \left (\frac{2}{1+\alpha}, \beta^*(\alpha)\right ]$, there are two (symmetric) global maximizers, $\bm^1$ and $\bm^2$;
        \item[(iii)] for $\beta > \beta^*(\alpha)$, there are two (symmetric) global maximizers, $\bm^1$ and $\bm^2$, and two (anti-symmetric) local maximizers, $\bm^3$ and $\bm^4$.
    \end{itemize}
\end{proposition}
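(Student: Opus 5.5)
The plan is to analyse the critical point equations of $G_{\alpha,\beta}$ directly, exploiting the symmetries $\bm\mapsto-\bm$ and $(m_1,m_2)\mapsto(m_2,m_1)$. Since $\mathcal{I}'(m)=\operatorname{arctanh}(m)\to\pm\infty$ at $\pm1$, every maximizer lies in the open square $(-1,1)^2$ and satisfies
\begin{equation*}
\operatorname{arctanh}(m_1)=\tfrac{\beta}{2}(m_1+\alpha m_2),\qquad \operatorname{arctanh}(m_2)=\tfrac{\beta}{2}(m_2+\alpha m_1).
\end{equation*}
The Hessian is $\frac{\beta}{2}\begin{pmatrix}1&\alpha\\ \alpha&1\end{pmatrix}-\mathrm{diag}\bigl(\frac{1}{1-m_1^2},\frac{1}{1-m_2^2}\bigr)$. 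At $\bm^0$ its eigenvalues are $\frac{\beta(1\pm\alpha)}{2}-1$. Hence $\bm^0$ is a strict local maximum iff $\beta<\frac{2}{1+\alpha}$, and a saddle or minimum otherwise. The borderline case $\beta=\frac{2}{1+\alpha}$ is handled by a fourth-order expansion along the direction $(1,1)$, using $\mathcal{I}(m)=\frac{m^2}{2}+\frac{m^4}{12}+O(m^6)$.

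\textbf{Step 1 (reduction to a quadrant).} Since only the term $\frac{\beta\alpha}{2}m_1m_2$ is sensitive to signs, we have $G_{\alpha,\beta}(m_1,m_2)\le G_{\alpha,\beta}(|m_1|,|m_2|)$, with strict inequality when $\alpha>0$ and $m_1m_2<0$. So global maximizers lie in the closed quadrants $\{m_1m_2\ge0\}$, and by $\bm\mapsto-\bm$ it suffices to study $[0,1)^2$.

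\textbf{Step 2 (the positive quadrant).} Set $h(x)=\operatorname{arctanh}(x)/x$, which is strictly increasing on $(0,1)$ with $h(0^+)=1$. For $m_1,m_2>0$ the equations read
\begin{equation*}
h(m_1)=\tfrac{\beta}{2}\bigl(1+\alpha\tfrac{m_2}{m_1}\bigr),\qquad h(m_2)=\tfrac{\beta}{2}\bigl(1+\alpha\tfrac{m_1}{m_2}\bigr).
\end{equation*}
If $m_1>m_2$, the left side of the first equation is the larger one while its right side is the smaller one, which is a contradiction. If $m_2=0$, the second equation forces $m_1=0$. Hence the only critical points in $[0,1)^2$ are $\bm^0$ and symmetric points $(m,m)$ with $h(m)=\frac{\beta(1+\alpha)}{2}$. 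This equation has a (unique) solution $m_g>0$ iff $\beta>\frac{2}{1+\alpha}$. Combined with Step 1 and the continuity/compactness of $G$, this gives (i), and shows that for $\beta>\frac{2}{1+\alpha}$ the global maximizers are exactly $\bm^1,\bm^2$.

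\textbf{Step 3 (the mixed quadrant $m_1>0>m_2$).} Antisymmetric critical points $(m,-m)$ solve $h(m)=\frac{\beta(1-\alpha)}{2}$, so they exist iff $\beta>\frac{2}{1-\alpha}$. At $(m,-m)$ the Hessian has eigenvalues $\frac{\beta}{2}(1\pm\alpha)-\frac{1}{1-m^2}$, so it is negative definite iff $\frac{1}{1-m^2}>\frac{\beta(1+\alpha)}{2}$. Substituting $\beta=\frac{2h(m)}{1-\alpha}$, this becomes
\begin{equation*}
(1-m^2)\,h(m)<\frac{1-\alpha}{1+\alpha}.
\end{equation*}
The function $g(t)=(1-t^2)\frac{\operatorname{arctanh}t}{t}$ decreases from $1$ to $0$ on $(0,1)$; I would check this via its series or a derivative computation. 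Therefore the condition is equivalent to $m>t_*$, that is, $\beta>\beta^*(\alpha)$. This identifies $\bm^3,\bm^4$ as strict local maximizers exactly in regime (iii). It also yields $\beta^*(\alpha)\ge\frac{2}{1-\alpha}>\frac{2}{1+\alpha}$.

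\textbf{Main obstacle (non-antisymmetric critical points in the mixed quadrant).} It remains to show that no other critical point with $m_1m_2<0$ is a local maximum. For $u=-m_2>0$ the equations read
\begin{equation*}
h(m_1)=\tfrac{\beta}{2}\bigl(1-\alpha\tfrac{u}{m_1}\bigr),\qquad h(u)=\tfrac{\beta}{2}\bigl(1-\alpha\tfrac{m_1}{u}\bigr),
\end{equation*}
and the monotonicity trick of Step 2 no longer produces a contradiction, since asymmetric solutions do appear (they are the saddles born from the pitchfork at $\beta^*$).

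My first attempt is a one-dimensional reduction. For fixed $m_1$, the map $m_2\mapsto G$ is maximized where $\operatorname{arctanh}(m_2)-\frac{\beta}{2}m_2=\frac{\beta\alpha}{2}m_1$. I would then study the profile $\phi(m_1)=\max_{m_2\le0}G(m_1,m_2)$ and show that on the line $m_1=-m_2$ it has the required number of maxima. If this becomes messy, I would fall back on a degree count. Work in the open triangle $T=\{m_1>0,\ m_2<0,\ m_1>|m_2|\}$ (the other half follows by symmetry). On the boundary of $T$ there are no critical points other than the antisymmetric one and $\bm^0$, by Step 2 and the diagonal analysis, and $\nabla G$ points inward near $\partial(-1,1)^2$. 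A Poincaré–Hopf index computation, together with the known indices of the antisymmetric points from Step 3, then forces any extra critical points to come in saddle pairs, or to be saddles, rather than maxima. Before settling on this route, I would also verify the two-curve intersection picture of the critical point equations numerically.
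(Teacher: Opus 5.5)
Your Step 3 is exactly the paper's argument: the eigenvalues $\frac{\beta}{2}(1\pm\alpha)-\frac{1}{1-m_l^2}$ at $(m_l,-m_l)$, plus monotonicity of $\psi$. Steps 1--2 go beyond the paper. Its proof only inspects the Hessian at $\bm^0$ and $\bm^1$ and then asserts that $\bm^0$ is the unique maximizer, resp.\ that $\bm^1,\bm^2$ are global. Your bound $G_{\alpha,\beta}(m_1,m_2)\le G_{\alpha,\beta}(|m_1|,|m_2|)$, together with the monotonicity-of-$h$ contradiction in the positive quadrant, actually proves three things: the global maximizer is $\bm^0$, resp.\ exactly $\bm^1,\bm^2$, and $\bm^3,\bm^4$ lie strictly below them. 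For $\beta\le\frac{2}{1+\alpha}$ you need no fourth-order expansion. There $D^2G_{\alpha,\beta}\preceq\frac{\beta}{2}\begin{pmatrix}1&\alpha\\ \alpha&1\end{pmatrix}-I\preceq 0$, and $D^2G_{\alpha,\beta}$ is negative definite at every $\bm\neq\bm^0$. Hence $G_{\alpha,\beta}$ is strictly concave and $\bm^0$ is its only critical point, which also disposes of the mixed quadrants in regime (i).

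The genuine gap is the one you flagged. For $\beta>\frac{2}{1+\alpha}$, the claim ``two or four local maximizers'' requires excluding local maxima at critical points with $m_1m_2<0$, $m_1\neq -m_2$, and your proposal does not do this. The paper does not do it either; its proof is purely local at $\bm^0,\bm^1,\bm^3$, so there is nothing to borrow.

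Your Poincar\'e--Hopf fallback cannot close it. Index counting constrains only the \emph{sum} of indices, and an extra local maximum plus an extra saddle contributes $+1-1=0$, so it is invisible. The hypotheses also fail on $T$:
\begin{itemize}
\item $\bm^3$ lies on $\partial T$;
\item $\nabla G_{\alpha,\beta}$ is tangent to the antidiagonal, by the symmetry $(m_1,m_2)\mapsto(-m_2,-m_1)$;
\item on the $m_1$-axis, $\partial_{m_2}G_{\alpha,\beta}=\frac{\beta\alpha}{2}m_1>0$, so the gradient points out of $T$.
\end{itemize}
The same boundary effect spoils your profile $\max_{m_2\le 0}G_{\alpha,\beta}(m_1,m_2)$, for which $m_2=0$ is always a competing boundary maximizer. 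You would have to follow the interior branch of $\{\partial_{m_2}G_{\alpha,\beta}=0,\ \partial^2_{m_2}G_{\alpha,\beta}<0\}$ instead.

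Here is concretely what is missing. Set $k(x)=\frac{\beta}{2}x-\operatorname{arctanh}(x)$, which is strictly concave on $(0,1)$, and $\lambda=\frac{\beta\alpha}{2}$. A critical point $(a,-b)$ with $a,b>0$ solves $k(a)=\lambda b$, $k(b)=\lambda a$, and has Hessian $\begin{pmatrix}k'(a)&\lambda\\ \lambda&k'(b)\end{pmatrix}$. It can be a local maximum only if $k'(a),k'(b)<0$ and $k'(a)k'(b)\ge\lambda^2$. So you must show that every $2$-cycle $\{a,b\}$, $a\neq b$, of $f=k/\lambda$ lying on its decreasing branch has multiplier $f'(a)f'(b)<1$. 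Concavity alone gives only $f'(a)>-1>f'(b)$ for $a<b$, because the chord joining $(a,b)$ and $(b,a)$ has slope $-1$. That does not decide the product, so a genuinely global argument is still needed.

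A smaller point of the same kind concerns $\beta=\beta^*(\alpha)$, where the Hessian at $\bm^3$ is singular. There ``not negative definite'' does not rule out a degenerate maximum at the endpoint of regime (ii). You need the fourth-order term in the $(1,1)$ direction after eliminating the $(1,-1)$ direction. It turns out to be positive, so $\bm^3$ is then a degenerate saddle.
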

To prove the above proposition, we need the following technical lemma. 
\begin{lemma}\label{lem:elementary-ineq}
For $u \in (0,1)$, define
$\phi(u)=\frac{\operatorname{arctanh}(u)}{u}$ and 
$\psi(u)=(1-u^2)\frac{\operatorname{arctanh}(u)}{u}$.
Then:
\begin{enumerate}[label=\rm(\roman*)]
\item $\phi(u)<\dfrac{1}{1-u^2}$,  for every $u\in(0,1)$;
\item $\psi$ is strictly decreasing on $(0,1)$, with $\lim_{u \to 0}\psi(u)=1$ and 
%\qquad \text{ and } \qquad 
$\lim_{u \to 1}\psi(u)=0$.
\end{enumerate}
\end{lemma}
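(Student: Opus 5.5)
For (i), I will use the power series $\operatorname{arctanh}(u)=\sum_{k\ge0}\frac{u^{2k+1}}{2k+1}$. This gives $\phi(u)=\sum_{k\ge0}\frac{u^{2k}}{2k+1}$, while $\frac{1}{1-u^2}=\sum_{k\ge0}u^{2k}$ on $(0,1)$. The two series agree at $k=0$. For every $k\ge1$ the coefficient $\frac{1}{2k+1}$ is strictly smaller than $1$, and $u^{2k}>0$, so the strict inequality $\phi(u)<\frac{1}{1-u^2}$ follows termwise. Equivalently, one could argue with $g(u)=\frac{u}{1-u^2}-\operatorname{arctanh}(u)$. Here $g(0)=0$ and $g'(u)=\frac{1+u^2}{(1-u^2)^2}-\frac{1}{1-u^2}=\frac{2u^2}{(1-u^2)^2}>0$, so $g>0$ on $(0,1)$.

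For the limits in (ii), since $\phi(u)\to1$ as $u\to0$ (series above), we get $\psi(u)\to1$. As $u\to1$, $\operatorname{arctanh}(u)=\frac12\log\frac{1+u}{1-u}$ diverges only logarithmically, so $(1-u^2)\operatorname{arctanh}(u)\to0$ because $(1-u)\log(1-u)\to0$. Hence $\psi(u)\to0$.

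For strict monotonicity, I will differentiate $\psi(u)=\frac{(1-u^2)\operatorname{arctanh}(u)}{u}$. Using $\frac{d}{du}\operatorname{arctanh}(u)=\frac{1}{1-u^2}$, we get
\[
\psi'(u)=\frac{u\bigl(-2u\operatorname{arctanh}(u)+1\bigr)-(1-u^2)\operatorname{arctanh}(u)}{u^2}=\frac{u-(1+u^2)\operatorname{arctanh}(u)}{u^2}.
\]
It then suffices to show $(1+u^2)\operatorname{arctanh}(u)>u$. This is immediate, since $\operatorname{arctanh}(u)>u$ for $u\in(0,1)$ (all series coefficients are positive) and $1+u^2>1$. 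Therefore $\psi'<0$ on $(0,1)$, and $\psi$ is strictly decreasing.

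No step is a genuine obstacle. The only point needing care is the sign computation of $\psi'$, which the elementary bound $\operatorname{arctanh}(u)>u$ settles. As a consequence, $\psi$ is a bijection from $(0,1)$ onto $(0,1)$, so $t_*$ in \eqref{def:tstar} is well defined and unique for $\alpha\in(0,1)$.
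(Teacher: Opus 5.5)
Your proof is correct and follows essentially the same route as the paper: the auxiliary function $\frac{u}{1-u^2}-\operatorname{arctanh}(u)$ with derivative $\frac{2u^2}{(1-u^2)^2}$ for (i), and the sign of $\psi'(u)=\frac{u-(1+u^2)\operatorname{arctanh}(u)}{u^2}$ via $\operatorname{arctanh}(u)>u$ for (ii). Your explicit treatment of the limit $u\to1$, using the logarithmic form of $\operatorname{arctanh}$, is more complete than the paper's appeal to the expansion $\operatorname{arctanh}(u)=u+O(u^3)$, which only covers $u\to0$.
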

\begin{proof}
For part (i), let 
$f(u)=\frac{u}{1-u^2}-\operatorname{arctanh}(u)$,
and note that 
$f'(u)=\frac{2u^2}{(1-u^2)^2}>0$.
Hence $f(u)>0$ for every $u\in(0,1)$, and (i) follows. 

For part (ii), a direct computation gives
$\psi'(u)=\frac{u-(1+u^2)\operatorname{arctanh}(u)}{u^2}$.
Since $\operatorname{arctanh}(u)>u$ for every $u\in(0,1)$,
$
(1+u^2)\operatorname{arctanh}(u)>u,
$
hence $\psi'(u)<0$ for every $u\in(0,1)$. The endpoint limits follow from the standard expansion
$
\operatorname{arctanh}(u)=u+O(u^3)
$.
\end{proof}

\begin{proof}[Proof of Proposition \ref{proposition:minima1}]
We consider the stationary points of $G_{\alpha,\beta}(\bm)$ defined as
\begin{equation}\label{eq:grad}
\begin{split}
\frac{\partial G_{\alpha,\beta}}{\partial m_1}(\bm)
&= 
\beta\Big(\tfrac{1}{2}m_1 +\tfrac{\alpha}{2}m_2\Big)
- \operatorname{arctanh}(m_1)=0,
\\[2mm]
\frac{\partial G_{\alpha,\beta}}{\partial m_2}(\bm)
&= 
\beta\Big(\tfrac{1}{2}m_2 + \tfrac{\alpha}{2}m_1\Big)
- \operatorname{arctanh}(m_2)=0.
\end{split}
\end{equation}
Equivalently,
\begin{align}\label{eq:system_m}
m_1=\tanh\!\Big(\frac{\beta}{2}(m_1+\alpha m_2)\Big),
\qquad
m_2=\tanh\!\Big(\frac{\beta}{2}(\alpha m_1+m_2)\Big).
\end{align}
The Hessian matrix is
\begin{align*}
    D^2G_{\alpha,\beta}(\bm)=\begin{pmatrix}
        \frac{\beta}{2}-\frac{1}{1-m_1^2} & \frac{\beta \alpha}{2} \\
        \frac{\beta \alpha}{2} & \frac{\beta}{2}-\frac{1}{1-m_2^2}
    \end{pmatrix}\,.
\end{align*}

First we show that $\bm^0$ is the unique maximizer if and only if $\beta \leq \frac{2}{1+\alpha}$. By computing the Hessian in $\bm^0$, we obtain
\begin{equation}\label{eq:Hessian} D^2G_{\alpha,\beta}(0,0)=\begin{pmatrix}
        \frac{\beta}{2}-1& \frac{\beta \alpha}{2} \\
        \frac{\beta \alpha}{2} & \frac{\beta}{2}-1
    \end{pmatrix}\,,
\end{equation}
and the claim follows immediately from the signs of the two eigenvalues $\lambda_\pm=\frac{\beta}{2} (1\pm \alpha)-1$.

Next, we establish the existence of two symmetric critical points, $\bm^1$ and $\bm^2$, which are global maximizers whenever $\beta > \frac{2}{1+\alpha}$. Let $\bm^1=(m_g,m_g)$, with $m_g>0$, be the symmetric solution of $\nabla G_{\alpha,\beta}(\bm)=0$. Using \eqref{eq:system_m}, we derive the condition
\begin{align}\label{eq:condition_mg}
    m_g=\tanh\!\Big(\frac{\beta}{2}(1+\alpha)m_g\Big),
\end{align}
which has nonzero solution if and only if $\beta > \frac{2}{1+\alpha}$. When this holds, by computing the Hessian in $\bm^1$, we obtain
$$   D^2G_{\alpha,\beta}( m_g, m_g)=
\begin{pmatrix}
        \frac{\beta}{2}-\frac{1}{1-m_g^2} & \frac{\beta \alpha}{2} \\
        \frac{\beta \alpha}{2} & \frac{\beta}{2}-\frac{1}{1-m_g^2}
    \end{pmatrix}\,,
    $$
hence the eigenvalues are equal to $\lambda_{\pm}=\frac{\beta}{2} (1\pm \alpha)-\frac{1}{1-m_g^2}$.
By \eqref{eq:condition_mg},
$$
    \frac{\beta}{2}(1+\alpha)=\frac{\operatorname{arctanh}(m_g)}{m_g} \quad \Longrightarrow\quad
    \lambda_+=\frac{\operatorname{arctanh}(m_g)}{m_g}-\frac{1}{1-m_g^2}.
$$
By Lemma \ref{lem:elementary-ineq}(i), it follows that $\lambda_-<\lambda_+<0$. 
Since both eigenvalues are negative, $\bm^1$ is a maximizer. By symmetry, the same holds for $\bm^2=(-m_g,-m_g)$.

It remains to show that there exists a value $\beta^*(\alpha)$, defined as in \eqref{def:betastar}, such that for $\beta > \beta^*(\alpha)$, two additional (antisymmetric) maximizers, $\bm^3$ and $\bm^4$, appear. 
Let $\bm^3=(m_l,-m_l)$, with $m_l>0$, be the antisymmetric solution of $\nabla G_{\alpha,\beta}(\bm)=0$. Using \eqref{eq:system_m}, we derive the condition
\begin{align}\label{eq:condition_ml}
    m_l=\tanh\!\Big(\frac{\beta}{2}(1-\alpha )m_l\Big),
\end{align}
which has nonzero solution if and only if $\beta > \frac{2}{1-\alpha}$. When this holds, by computing the Hessian in $\bm^3$, we obtain
$$
D^2G_{\alpha,\beta}( m_l, -m_l)=\begin{pmatrix}
        \frac{\beta}{2}-\frac{1}{1-m_l^2} & \frac{\beta \alpha}{2} \\
        \frac{\beta \alpha}{2} & \frac{\beta}{2}-\frac{1}{1-m_l^2}
    \end{pmatrix}\,,
$$
which has  eigenvalues $\lambda_{\pm}=\frac{\beta}{2} (1\pm \alpha)-\frac{1}{1-m_l^2}$
By \eqref{eq:condition_ml} and Lemma \ref{lem:elementary-ineq}(i), we get
$$
    \frac{\beta}{2}(1-\alpha)=\frac{\operatorname{arctanh}(m_l)}{m_l}
    \quad \Longrightarrow\quad
    \lambda_-=\frac{\operatorname{arctanh}(m_l)}{m_l}-\frac{1}{1-m_l^2}<0\,.
$$
We finally show that $\lambda_+<0$ whenever $\beta>\beta^*(\alpha)$. 
By \eqref{eq:condition_ml},  we get
$$ 
\lambda_+=\frac{1+\alpha}{1-\alpha} \frac{\operatorname{arctanh}(m_l)}{m_l}-\frac{1}{1-m_l^2},
$$
hence $\lambda_+<0$ if and only if
$$
    (1-m_l^2)\frac{\operatorname{arctanh}(m_l)}{m_l}<\frac{1-\alpha}{1+\alpha}\,.
$$
Note that the left-hand side corresponds to the function $\psi$ defined in Lemma \ref{lem:elementary-ineq}, and the right-hand side is in $(0,1)$. By Lemma \ref{lem:elementary-ineq}(ii) there exists a unique $t_* \in (0,1)$ such that 
\begin{align*}
    \begin{cases}
        \lambda_+>0, &\qquad \text{ if } \,\,m_l<t_*, \\
        \lambda_+=0, &\qquad \text{ if } \,\,m_l=t_*, \\
        \lambda_+<0, &\qquad \text{ if } \,\,m_l>t_*.
    \end{cases}
\end{align*}
Since by \eqref{eq:condition_ml} the solution $t_*$ is strictly increasing in $\beta$, there exists a unique value $\beta^*(\alpha)$ such that
\begin{align}\label{beta*}
\beta^*(\alpha)=\frac{2}{(1-\alpha)}\frac{\operatorname{arctanh}(t_*)}{t_*}\,.
\end{align}
In conclusion, $\lambda_+<0$ for any $\beta<\beta^*(\alpha)$.
\end{proof}

The extremal case when $\beta > 0$ and $\alpha = 0$, already analyzed in \cite{LS18}, can be recovered from Lemma \ref{lem:critical0} by letting $\alpha_n \to 0$ as $n \to \infty$. Indeed, in this limit, the intermediate region $\left ( \frac{2}{1+\alpha_n}, \beta^*(\alpha_n)\right]$ collapses, since $\frac{2}{1+\alpha_n} \to 2$ and $\beta^*(\alpha_n) \to 2$, as we show in the next lemma. As a consequence, the system exhibits a single maximizer at high temperatures and four maximizers at low temperatures.
Moreover, the value of the antisymmetric local maxima becomes comparable to that of the global ones, 
up to a factor $\varepsilon(\alpha_n)$  which vanishes as $\alpha_n \to 0$ (see \eqref{def:M1M2} and Lemma \ref{lem:epsilonalpha}). 
In conclusion, in the low temperature regime, there are asymptotically four global maximizers.

\begin{lemma}\label{lem:critical0}
Let $(\alpha_n)_{n \in \N} \in (0,1]$ such that $\lim_{n \to \infty} \alpha_n=0$. Then $\beta^*(\alpha_n)$ defined in \eqref{def:betastar} satisfies
\[
\lim_{n \to \infty}\beta^*(\alpha_n)=2.
\]
\end{lemma}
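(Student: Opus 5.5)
We need to show $\beta^*(\alpha_n)\to 2$. By definition \eqref{def:betastar}, $\beta^*(\alpha)=\frac{2}{1-\alpha}\phi(t_*(\alpha))$, where $\phi(u)=\operatorname{arctanh}(u)/u$ and $t_*(\alpha)\in(0,1)$ is determined by $\psi(t_*(\alpha))=\frac{1-\alpha}{1+\alpha}$, with $\psi$ as in Lemma \ref{lem:elementary-ineq}. Since $\frac{2}{1-\alpha_n}\to 2$, it suffices to show that $\phi(t_*(\alpha_n))\to 1$. Because $\phi$ is continuous on $(0,1)$ with $\lim_{u\to 0}\phi(u)=1$ (from $\operatorname{arctanh}(u)=u+O(u^3)$), this reduces to proving $t_*(\alpha_n)\to 0$.

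To show $t_*(\alpha_n)\to 0$, I will use Lemma \ref{lem:elementary-ineq}(ii). The function $\psi$ is continuous and strictly decreasing on $(0,1)$, with $\psi(0^+)=1$ and $\psi(1^-)=0$. It is therefore a homeomorphism from $(0,1)$ onto $(0,1)$, and its inverse $\psi^{-1}:(0,1)\to(0,1)$ is continuous, strictly decreasing, and satisfies $\psi^{-1}(s)\to 0$ as $s\to 1^-$. Since $t_*(\alpha_n)=\psi^{-1}\!\left(\frac{1-\alpha_n}{1+\alpha_n}\right)$ and $\frac{1-\alpha_n}{1+\alpha_n}\to 1^-$ as $\alpha_n\to 0$ (for $\alpha_n>0$ this quantity lies in $(0,1)$, so $t_*$ is well defined), we get $t_*(\alpha_n)\to 0$.

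Alternatively, one can argue directly. Given $\delta>0$, strict monotonicity gives $\psi(\delta)<1$. For $n$ large, $\frac{1-\alpha_n}{1+\alpha_n}>\psi(\delta)$, and hence $t_*(\alpha_n)<\delta$.

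Combining the two steps, $\beta^*(\alpha_n)=\frac{2}{1-\alpha_n}\phi(t_*(\alpha_n))\to 2\cdot 1=2$. There is no real obstacle here. The only point needing care is the inversion step, namely justifying that $\psi^{-1}$ sends values near $1$ to values near $0$; this follows from strict monotonicity together with the boundary limit in Lemma \ref{lem:elementary-ineq}(ii). If a rate is desired, one could additionally expand $\psi(u)=1-\tfrac{2}{3}u^2+O(u^4)$ to obtain $t_*(\alpha_n)^2\sim 3\alpha_n$, and hence $\beta^*(\alpha_n)=2+O(\alpha_n)$, but this is not needed for the limit.
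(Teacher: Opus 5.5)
Your proof is correct, but it takes a different route from the paper. The paper Taylor-expands both sides of the defining equation \eqref{def:tstar} around $t=0$, i.e.\ $1-\frac{2}{3}t_*^2+O(t_*^4)=1-2\alpha_n+O(\alpha_n^2)$. It solves this for $t_*^2\approx 3\alpha_n$ and substitutes into $\beta^*(\alpha_n)=\frac{2}{1-\alpha_n}\bigl(1+\frac{t_*^2}{3}+O(t_*^4)\bigr)$. This gives the limit together with a rate, $\beta^*(\alpha_n)=2+O(\alpha_n)$.

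You argue qualitatively instead. By Lemma \ref{lem:elementary-ineq}(ii), $\psi$ is a decreasing homeomorphism of $(0,1)$ onto itself, so $t_*(\alpha_n)=\psi^{-1}\bigl(\frac{1-\alpha_n}{1+\alpha_n}\bigr)\to 0$, and therefore $\phi(t_*(\alpha_n))\to 1$.

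Your argument yields no rate unless you add the expansion from your final remark. On one point, however, it is more complete than the paper's. Evaluating the expansion of $\psi$ near $0$ at $t_*$ tacitly presupposes $t_*(\alpha_n)\to 0$, and that is exactly what your monotonicity step establishes. So your argument supplies the justification the paper's Taylor step needs, and together with your last remark it recovers the paper's quantitative statement.

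One trivial slip: for $\alpha_n=1$ the quantity $\frac{1-\alpha_n}{1+\alpha_n}$ equals $0$ and $\beta^*(\alpha_n)$ is undefined. You should write ``$\alpha_n\in(0,1)$, which holds for all large $n$''. This does not affect the limit.
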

\begin{proof}
    By Taylor expanding around $0$ both terms of the identity \eqref{def:tstar}, we readily obtain
$$  
1-\frac{2}{3}t^2+O(t^4)= 1-2\alpha_n+2\alpha_n^2+O(\alpha_n^3),
$$ 
which implies
$$
t_*^2=3\alpha_n(1-\alpha_n)+O(\alpha_n).
$$
Plugging this value in \eqref{beta*}, and applying again a Taylor expansion of $\operatorname{arctanh}(t_*)$ around $0$, we conclude that
    \begin{align*}
        \beta^*(\alpha_n)%=\frac{2}{1-\alpha_n} \left (1+\frac{t^2}{3}+O(t^4) \right)
        =\frac{2}{1-\alpha_n} \left (1+\alpha_n(1-\alpha_n)+O(\alpha_n)\right)
        \to 2\,,\quad \mbox{ as } \alpha_n \to 0\,.
    \end{align*}
\end{proof}

While $G_{\alpha,\beta}(\bm^0)=2\log2$ for any $\beta$ and $\alpha$, 
by explicit computation, let us introduce the following notation 
for the value of $G_{\alpha,\beta}$ in its local or global maximizers:
\begin{equation}\label{Max}
\begin{split}
&M_1(\alpha)=G_{\alpha,\beta}(\bm^1)\equiv G_{\alpha,\beta}(\bm^2),\\
&M_2(\alpha)=G_{\alpha,\beta}(\bm^3)\equiv G_{\alpha,\beta}(\bm^4).
\end{split}
\end{equation}
Then we define the energy gap
\begin{equation}
\label{def:M1M2}
 \varepsilon(\alpha)= M_1(\alpha)-M_2(\alpha).
\end{equation}
The following lemma shows that if $\lim_{n \to \infty}\alpha_n=0$, then $\varepsilon(\alpha_n)$  decreases linearly with $\alpha_n$. 
\begin{lemma}\label{lem:epsilonalpha}
Let $(\alpha_n)_{n\in\N}\in (0,1]$ such that $\lim_{n\to\infty}\alpha_n=0$, and for $\beta>\beta^*(\alpha_n)$, 
let $m^*>0$ be the unique positive solution of the fixed-point equation $\mathrm{atanh}(m)=\frac{\beta}{2}m$.
Then, for any $n\in \N$,
\begin{align*}
\varepsilon(\alpha_n)=(m^*)^2\beta \, \alpha_n + o(\alpha_n).
\end{align*}
\end{lemma}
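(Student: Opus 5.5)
We will compute $\varepsilon(\alpha)=M_1(\alpha)-M_2(\alpha)$ as a function of $\alpha$ near $0$ by an envelope-theorem argument rather than by expanding $m_g$ and $m_l$ explicitly.

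The key observation is that $\bm^1(\alpha)=(m_g(\alpha),m_g(\alpha))$ and $\bm^3(\alpha)=(m_l(\alpha),-m_l(\alpha))$ are critical points of $G_{\alpha,\beta}$: they satisfy \eqref{eq:grad}. Hence, wherever $\alpha\mapsto \bm^i(\alpha)$ is differentiable,
\begin{equation*}
\frac{d}{d\alpha}G_{\alpha,\beta}(\bm^i(\alpha))=\partial_\alpha G_{\alpha,\beta}(\bm)\big|_{\bm=\bm^i(\alpha)}+\nabla G_{\alpha,\beta}(\bm^i(\alpha))\cdot \frac{d\bm^i}{d\alpha}=\frac{\beta}{2}\,m_1m_2\big|_{\bm=\bm^i(\alpha)}.
\end{equation*}
This gives $M_1'(\alpha)=\frac{\beta}{2}m_g(\alpha)^2$ and $M_2'(\alpha)=-\frac{\beta}{2}m_l(\alpha)^2$. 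At $\alpha=0$ the functional splits as $G_{0,\beta}(\bm)=\sum_i(\frac{\beta}{4}m_i^2-\mathcal I(m_i))$, so $M_1(0)=M_2(0)$ and $\varepsilon(0)=0$. Integrating from $0$ to $\alpha_n$ then yields
\begin{equation*}
\varepsilon(\alpha_n)=\frac{\beta}{2}\int_0^{\alpha_n}\big(m_g(s)^2+m_l(s)^2\big)\,ds .
\end{equation*}

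The main obstacle is the regularity needed to justify this: $m_g(\alpha)$ and $m_l(\alpha)$ must be well defined, $C^1$, and converge to $m^*$ as $\alpha\to0$. We get this from the implicit function theorem applied to the scalar equations \eqref{eq:condition_mg} and \eqref{eq:condition_ml}, written as $F_\pm(m,\alpha)=\operatorname{arctanh}(m)-\frac{\beta}{2}(1\pm\alpha)m=0$.

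At $(m^*,0)$ we have $\partial_m F_\pm=\frac{1}{1-(m^*)^2}-\frac{\beta}{2}$, which is strictly positive by Lemma \ref{lem:elementary-ineq}(i), since $\frac{\beta}{2}=\operatorname{arctanh}(m^*)/m^*$. The hypothesis $\beta>\beta^*(\alpha_n)$, together with Lemma \ref{lem:critical0}, ensures $\beta>2$ in the limit, so $m^*>0$ exists. The implicit function theorem then provides smooth branches $m_{g,l}(\alpha)\to m^*$ on a neighbourhood of $0$. These branches coincide with the positive solutions of \eqref{eq:condition_mg} and \eqref{eq:condition_ml}, by uniqueness of the positive fixed point of $m\mapsto\tanh(\kappa m)$ for $\kappa>1$. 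Implicit differentiation also gives the expansions of $m_g$ and $m_l$ stated in the Remark, although we do not need them here.

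By continuity, the integrand $m_g(s)^2+m_l(s)^2$ equals $2(m^*)^2+o(1)$ as $s\to0$. Substituting into the integral formula gives
\begin{equation*}
\varepsilon(\alpha_n)=\frac{\beta}{2}\cdot 2(m^*)^2\alpha_n+o(\alpha_n)=(m^*)^2\beta\,\alpha_n+o(\alpha_n),
\end{equation*}
as claimed. If one prefers to avoid integration, the same conclusion follows from a first-order Taylor expansion of $M_1-M_2$ at $\alpha=0$ using the derivative formulas above.
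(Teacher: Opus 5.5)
Your proof is correct. It shares the paper's regularity step, namely the implicit function theorem giving $C^1$ branches $m_g(\alpha),m_l(\alpha)\to m^*$. Where it differs is in how you handle the key first-order cancellation.

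\emph{The paper's route.} It writes $G_{\alpha_n,\beta}=G_{0,\beta}+\frac{\beta\alpha_n}{2}m_1m_2$, expands at $\alpha=0$, and asserts $G_{0,\beta}(m_g,m_g)=G_{0,\beta}(-m_l,m_l)+o(\alpha_n)$ ``since $G_{0,\beta}$ is even''. This step tacitly uses $\nabla G_{0,\beta}(m^*,\pm m^*)=0$. Evenness alone is not enough, because $m_g-m^*$ and $m_l-m^*$ are of order $\alpha_n$ with opposite signs. Along the way the paper computes $m_g'(0)$ and $m_l'(0)$, so it also obtains the expansions of $m_g,m_l$ quoted in the Remark.

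\emph{Your route.} You use stationarity of $\bm^1(\alpha),\bm^3(\alpha)$ for $G_{\alpha,\beta}$ at every $\alpha$, via the envelope theorem. This makes explicit why the motion of the maximizers contributes nothing at first order. It also yields the exact identity $\varepsilon(\alpha)=\frac{\beta}{2}\int_0^{\alpha}\bigl(m_g(s)^2+m_l(s)^2\bigr)\,ds$. That identity shows $\varepsilon$ is positive and increasing for small $\alpha>0$, and it needs no knowledge of $m_g'(0),m_l'(0)$.

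Your use of Lemma~\ref{lem:elementary-ineq}(i) to show $\partial_mF_\pm(m^*,0)>0$ also supplies a justification that the paper only asserts.

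One minor imprecision: Lemma~\ref{lem:critical0} by itself gives only $\beta\ge 2$. Strict inequality follows from $\beta^*(\alpha)=\frac{2}{1-\alpha}\frac{\operatorname{arctanh}(t_*)}{t_*}>2$, or directly from the statement's presupposition that $m^*>0$ exists.
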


\begin{proof}
The stationary points of the functional $G_{\alpha_n,\beta}(\bm)$ are computed in the proof of Proposition \ref{proposition:minima1}. 
In particular, they are obtained by solving the system $\nabla G_{\alpha_n,\beta}(\bm)=0$, i.e., \eqref{eq:grad}.
Equating them to zero yields the system
\begin{align}\label{eq:stationary1}
\operatorname{arctanh}(m_1) &= \frac{\beta}{2}\,(m_1 + \alpha_n m_2),\qquad
\operatorname{arctanh}(m_2) = \frac{\beta}{2}\,(m_2 + \alpha_n m_1).
\end{align}  
For the symmetric maximizers $\bm^{1}$ and $\bm^{2}$, which have $m_1=m_2=\pm m_g$, \eqref{eq:stationary1} gives that 
\begin{align*}
\Phi_{\mathrm{sym}}(m_g,\alpha_n) =\operatorname{arctanh}(m_g) - \frac{\beta}{2}(1+\alpha_n)m
_g= 0.
\end{align*}
For the antisymmetric maximizers $\bm^{3}$ and $\bm^{4}$, which have $m_1=-m_2=\pm m_l$, \eqref{eq:stationary1} gives
\begin{align*}
\Phi_{\mathrm{as}}(m_l,\alpha_n) =\operatorname{arctanh}(m_l) - \frac{\beta}{2}(1-\alpha_n)m_l =0.
\end{align*}
By explicit computations, we get
\begin{align*}
\partial_m \Phi_{\mathrm{sym}}(m^*,0)
&=
\partial_m \Phi_{\mathrm{as}}(m^*,0)
=
\frac{1}{1-(m^*)^2} - \frac{\beta}{2},
\\[2mm]
\partial_{\alpha_n} \Phi_{\mathrm{sym}}(m^*,0)
&= -\frac{\beta}{2}m^*,
\qquad
\partial_{\alpha_n}\Phi_{\mathrm{as}}(m^*,0)
= +\frac{\beta}{2}m^*.
\end{align*}
Let us explicitly stress the dependence of the maximizers on $\alpha_n$ by introducing the notation $m_g = m_g(\alpha_n)$ and $m_l = m_l(\alpha_n)$. Since $\partial_m\Phi_{\mathrm{sym}}(m^*,0)\neq 0$ for $\beta>2$, the Implicit Function Theorem implies that there exists $\tilde\alpha>0$ such that, for all $\alpha_n \in  [0,\tilde\alpha)$, the functions $m_g(\alpha_n), m_l(\alpha_n) \in C^1([0,\tilde\alpha))$. Differentiating \(\Phi_{\mathrm{sym}}(m_g(\alpha_n),\alpha_n)=0\) and \(\Phi_{\mathrm{as}}(m_l(\alpha_n),\alpha_n)=0\) with respect to $\alpha_n$, yields
\begin{align*}
\frac{d m_g}{d\alpha_n}\Big|_{\alpha_n=0}
&=
-\frac{\partial_{\alpha_n} \Phi_{\mathrm{sym}}(m^*,0)}
{\partial_m \Phi_{\mathrm{sym}}(m^*,0)}
=
\frac{\frac{\beta}{2}m^*}
{\frac{1}{1-(m^*)^2}-\frac{\beta}{2}},
\\[4mm]
\frac{d m_l}{d\alpha_n}\Big|_{\alpha_n=0}
&=
-\frac{\partial_{\alpha_n} \Phi_{\mathrm{as}}(m^*,0)}
{\partial_m \Phi_{\mathrm{as}}(m^*,0)}
=
-\frac{\frac{\beta}{2}m^*}
{\frac{1}{1-(m^*)^2}-\frac{\beta}{2}}\,.
\end{align*}
Hence the Taylor expansions near $0$ are
\begin{equation}\label{eq:expansion_mstar}
\begin{split}
m_g(\alpha_n)
&= m^* + s\alpha_n + o(\alpha_n),
\\[2mm]
m_l(\alpha_n)
&= m^* + a\alpha_n + o(\alpha_n),
\end{split}
\end{equation}
with $s=-a=\frac{\frac{\beta}{2}m^*}
{\frac{1}{1-(m^*)^2}-\frac{\beta}{2}}$.

\medskip

We now proceed to analyze the energy gap $\varepsilon(\alpha_n).$ We start by noticing first the identity
\begin{equation*}
G_{\alpha_n,\beta}(\bm)
=
G_{0,\beta}(\bm)
+\frac{\beta\alpha_n}{2}m_1 m_2,
\end{equation*}
which follows from \eqref{def:E} and \eqref{def:free_energy_functional_CW}. Thus
\begin{align*}
&M_1(\alpha_n) = 
G_{0,\beta}(m_g(\alpha_n),m_g(\alpha_n))
+\frac{\beta\alpha_n}{2}(m_g(\alpha_n))^2,\\
&M_2(\alpha_n) =
G_{0,\beta}(-m_l(\alpha_n),m_l(\alpha_n))
-\frac{\beta\alpha_n}{2}(m_l(\alpha_n))^2.
\end{align*}
Since $G_{0,\beta}$ is even in each coordinate, when expanding the arguments around $m^*$ we get
\[
G_{0,\beta}(m_g(\alpha_n),m_g(\alpha_n))
=
G_{0,\beta}(-m_l(\alpha_n),m_l(\alpha_n))+o(\alpha_n).
\]
Hence
\begin{equation*}
\varepsilon(\alpha_n) = M_1(\alpha_n) - M_2(\alpha_n)
=
\frac{\beta\alpha_n}{2}\Big[(m_g(\alpha_n))^2 + (m_l(\alpha_n))^2\Big]
+o(\alpha_n).
\end{equation*}
Using again the expansions, we obtain
\[
(m_l(\alpha_n))^2+(m_g(\alpha_n))^2
=
(m^*)^2+(m^*)^2 + o(1)
=
2(m^*)^2+o(1),
\]
hence
\[
\varepsilon(\alpha_n)
=
(m^*)^2 \beta \, \alpha_n + o(\alpha_n).
\]
\end{proof}

\subsubsection{Precise asymptotic of the partition function}
An important ingredient for the proof of Theorem \ref{thm_convergence_CW}, is the
derivation of a suitable asymptotic formula for the partition function $\bar{Z}_{n,\beta}$
in the non-uniqueness region. 

To state the result, let us first introduce some convenient notation.
For $\beta>0$, let $k$ be the number 
of maximizers of $G_{\alpha_n,\beta}$, i.e., 
\begin{align*}
    k=\begin{cases}
            1, \qquad & \text{ if } \beta \le \beta_c, \\
        2, \qquad & \text{ if } \beta  \in \left (\beta_c,\beta^*(\alpha_n)\right ], \\
        4, \qquad & \text{ if } \beta>\beta^*(\alpha_n).
    \end{cases}
\end{align*}
Depending on the region of $\beta$, and in particular on the number of maximizers $k$, we partition the magnetization space $\Gamma_{n/2}^2$ into disjoint subsets. For $\beta\neq \beta_c$, we choose $\delta \in (1/3, 1/2)$ and define
\begin{equation} \label{def-intorni}
\begin{split}
\mbox{-\,\, if } \beta< \beta_c, \,\,\,   &B_{0}= \left \{ \bm \in \Gamma_{n/2}^2: \, \|\bm-\bm^0\| \leq \frac{1}{n^\delta} \right \} \,\, \text{and}
    \,\, \mathcal{C}_0=\Gamma_{n/2}^2 \setminus B_0;\\
\mbox{-\,\, if } \beta> \beta_c, \,\,\,    & B_{i}= \left \{ \bm \in \Gamma_{n/2}^2: \, \|\bm-\bm^i\| \leq \frac{1}{n^\delta} \right \},\, i=1,\ldots, k \,\,\text{and}
    \,\, \mathcal{C}=\Gamma_{n/2}^2 \setminus \bigcup_{i=1}^k B_i.  
\end{split}
\end{equation}
Moreover, for $i=0,...,4$, we set
\begin{equation}
\label{def:DN}
R_i^{(n)}=\sqrt{n}\cdot B_i, \qquad 
D_i^{(n)}=\frac{1}{\sqrt{1- (m_1^i)^2} \sqrt{1- (m_2^i )^2}}\sum_{{\bf x} \in R_i^{(n)}} \frac{4}{\pi n} e^{-\frac{1}{2}{\bf x}^TQ_i^{(n)}{\bf x} },
\end{equation}
where $Q_i^{(n)}$  are positive defined matrices such that % being $\bm^i$ a maximizer
\begin{align*}
Q_i^{(n)}= -\frac{1}{2} \begin{pmatrix}
        \frac{\beta}{2}-\frac{1}{1-(m^i_1)^2} & \frac{\beta \alpha_n}{2} \\
        \frac{\beta \alpha_n}{2} & \frac{\beta}{2}-\frac{1}{1-(m^i_2)^2}
    \end{pmatrix}
    =-\frac 1 2 D^2G_{\alpha_n,\beta}(\bm^i)\,.
\end{align*}
Note that, the terms $D_i^{(n)}$ are uniformly bounded in $n$ and converge, for $n\to\infty$, to the constants 
\begin{align}\label{D-costanti}
    D_i= \lim_{n\to\infty} D_i^{(n)} = \frac{1}{4\pi \sqrt{s_1^i s_2^i}}
    \int_{\mathbb{R}^2} e^{ -\frac{1}{2} {\bf x}^T Q_i {\bf x}}d{\bf x}
 =    \frac{1}{\sqrt{ \left(1-\frac{\beta}{2}s_1^i\right)
\left(1-\frac{\beta}{2}s_2^i\right) -\frac{\beta^2\alpha^2}{4}s_1^is_2^i}}\,,
\end{align}
where, for all $i=1,\ldots,4$, we set  $s_1^i=1-(m_1^i)^2$, $s_2^i= 1-(m_2^i)^2$ and 
\begin{align*}
Q_i=\lim_{n\to\infty}   Q_i^{(n)}=     
-\frac{1}{2} 
\begin{pmatrix}
\frac{\beta}{2}-\frac{1}{s_1^i} & \frac{\beta \hat\alpha}{2}\\
\frac{\beta \hat\alpha}{2} & \frac{\beta}{2}-\frac{1}{s_2^i}
\end{pmatrix} 
\,.
\end{align*}
With this notation, we obtain the following result.
\begin{proposition}\label{Z_approx} 
For $n$ large enough, we have
\begin{equation*}
\bar{Z}_{n,\beta}=
\begin{cases}
e^{n\log2} D_0^{(n)}(1+o(1)), & \text{if }\beta<\beta_c, \\[5pt]
e^{\frac{n}{2} M_1(\alpha_n)} \sum_{i=1}^2 D_i^{(n)} (1+o(1)),& \text{if } \beta  \in \big(\beta_c,\beta^*(\alpha_n)\big], \\[5pt]
    e^{\frac{n}{2} M_1(\alpha_n)} \left (\sum_{i=1}^2 D_i^{(n)}+e^{-\frac{n}{2} \varepsilon(\alpha_n)} \sum_{i=3}^4 D_i^{(n)} \right ) (1+o(1)),  & \text{if } \beta>\beta^*(\alpha_n),
\end{cases} 
\end{equation*}
where $\varepsilon(\alpha_n)$ is defined in $\eqref{def:M1M2}$.
Moreover, if $\beta=\beta_c$, we get
\begin{equation}\label{Zcrit-bound}
\sqrt[4]{n} \, e^{n\log2}  D_{\text{low}}^{(n)}\,(1+o(1))\le \bar{Z}_{n,\beta_c}\le
\sqrt{n} \, e^{n\log2}  D_{\text{up}}^{(n)}\,(1+o(1)), 
\end{equation}
where $D_{\text{low}}^{(n)}$ and $D_{\text{up}}^{(n)}$ are explicit 
uniformly bounded function of $n$, converging to finite positive constants.
\end{proposition}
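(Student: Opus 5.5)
We write $\bar Z_{n,\beta}$ as a sum over magnetization vectors and carry out a Laplace-type analysis on the lattice $\Gamma_{n/2}^2$. The starting point is the representation from the proof of Lemma~\ref{lem:free_energy_CW}:
\[
\bar Z_{n,\beta}=\sum_{\bm\in\Gamma_{n/2}^2}\binom{n/2}{\frac{1+m_1}{2}\frac n2}\binom{n/2}{\frac{1+m_2}{2}\frac n2}e^{\beta\frac n2E_{\alpha_n}(\bm)} .
\]
We replace the crude bounds \eqref{StirBound} by the sharp Stirling asymptotics, valid uniformly for $\bm$ in a compact subset of $(-1,1)^2$:
\[
\binom{n/2}{\frac{1+m}{2}\frac n2}=\frac{2}{\sqrt{\pi n(1-m^2)}}\,e^{-\frac n2\mathcal I(m)}(1+o(1)).
\]
Each summand then equals $\frac{4}{\pi n}\,(s_1s_2)^{-1/2}e^{\frac n2 G_{\alpha_n,\beta}(\bm)}(1+o(1))$. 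We split the sum according to the partition \eqref{def-intorni} into the balls $B_i$ around the maximizers and the complement $\mathcal C$ (or $\mathcal C_0$).

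\textbf{Contribution of each ball.} Write $\bm=\bm^i+\mathbf x/\sqrt n$ with $\mathbf x\in R_i^{(n)}$. Since $\nabla G_{\alpha_n,\beta}(\bm^i)=0$, Taylor's formula with third-order remainder gives
\[
\frac n2\big(G_{\alpha_n,\beta}(\bm)-G_{\alpha_n,\beta}(\bm^i)\big)=-\tfrac12\mathbf x^TQ_i^{(n)}\mathbf x+O(n\cdot n^{-3\delta}).
\]
The error is $o(1)$ because $\delta>1/3$. Similarly, the prefactor $(s_1s_2)^{-1/2}$ evaluated at $\bm$ equals its value at $\bm^i$ up to $1+O(n^{-\delta})$. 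Summing over $\mathbf x\in R_i^{(n)}$ therefore gives exactly $e^{\frac n2G_{\alpha_n,\beta}(\bm^i)}D_i^{(n)}(1+o(1))$. Inserting $G(\bm^0)=2\log 2$, $G(\bm^{1,2})=M_1(\alpha_n)$ and $G(\bm^{3,4})=M_1(\alpha_n)-\varepsilon(\alpha_n)$ produces the three displayed formulas.

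\textbf{Contribution of the complement.} On $\mathcal C$ we must show that the sum is $o(e^{\frac n2 M_1}D^{(n)})$. Near each maximizer, in an annulus $n^{-\delta}\le\|\bm-\bm^i\|\le\eta$, the uniform negative definiteness of the Hessian gives $G(\bm)\le G(\bm^i)-c\|\bm-\bm^i\|^2$. Hence these terms are bounded by $e^{\frac n2 G(\bm^i)}e^{-cn^{1-2\delta}}$ times a polynomial number of terms, which is negligible since $\delta<1/2$. Away from all maximizers, $G$ lies below its maximum by a positive constant, which gives exponential smallness. Near the boundary $|m_i|\to1$ the cruder bound \eqref{StirBound} suffices.

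\textbf{Main obstacle: uniformity in $\alpha_n$.} The delicate point is that all these constants must be uniform in $n$ as $\alpha_n\to\hat\alpha$. When $\alpha_n\to0$, the local maxima $\bm^{3,4}$ are only $\varepsilon(\alpha_n)\sim\alpha_n$ below $\bm^{1,2}$, so their balls must be kept as genuine main terms rather than discarded. This is why the third formula retains the factor $e^{-\frac n2\varepsilon(\alpha_n)}$, which may be of order one when $n\alpha_n\to c$.

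Two further regimes need separate care. When $\beta$ sits at or near $\beta^*(\alpha_n)$, the eigenvalue $\lambda_+$ at $\bm^{3,4}$ is degenerate or small. We handle this by noting that there these points are saddles or only shallow maxima with $G$ strictly below $M_1$ by a constant, so they fall into $\mathcal C$ unless $\alpha_n\to0$, in which case $\beta>2=\lim\beta^*$ keeps the Hessians uniformly nondegenerate. When $\beta$ is near $\beta_c$, the Hessian eigenvalue $\lambda_+\to0$; this is excluded from the first formula by assuming $\beta\ne\beta_c$ fixed with $\beta_c$ defined via $\hat\alpha$.

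\textbf{Critical case $\beta=\beta_c$.} Here we rotate to coordinates $u=(m_1+m_2)/2$ and $v=(m_1-m_2)/2$. In $v$ the quadratic form is still nondegenerate. In $u$ the quadratic term vanishes and the expansion is $-cu^4$, or a quadratic term of size $O(|\alpha_n-\hat\alpha|)$, which we bound above and below.

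For the lower bound we restrict to $|u|\le n^{-1/4}$, where the exponent is bounded below. This gives roughly $n^{3/4}$ lattice points times $\frac{4}{\pi n}\cdot\sqrt n$ from the Gaussian $v$-sum, hence a factor $n^{1/4}$.

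For the upper bound we drop the quartic term entirely and count $|u|\le n^{-\delta}$, together with the sign-definite quadratic part. This gives at most order $\sqrt n$.

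Taking $D_{\mathrm{low}}^{(n)}$ and $D_{\mathrm{up}}^{(n)}$ to be the corresponding Gaussian and Riemann-sum constants yields \eqref{Zcrit-bound}.
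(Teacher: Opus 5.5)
Away from criticality your argument is the paper's. It uses sharp Stirling on balls $B_i$ of radius $n^{-\delta}$ with $\delta\in(1/3,1/2)$, a second-order Taylor expansion with remainder $O(n^{1-3\delta})$, Riemann sums producing $D_i^{(n)}$, and a negligible complement. Your treatment of $\mathcal C$ (quadratic decay on annuli, a uniform gap away from the maximizers, and putting shallow or degenerate $\bm^{3,4}$ into $\mathcal C$ when $\varepsilon(\alpha_n)$ stays bounded below) is in fact more careful than the paper's.

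The gap is in the critical upper bound. Your claim that ``in $v$ the quadratic form is still nondegenerate'' fails when $\hat\alpha=0$. At $\beta_c=2/(1+\alpha_n)$ one has $\frac n2\big(G_{\alpha_n,\beta_c}(\bm)-2\log2\big)=-\frac{n\alpha_n}{1+\alpha_n}v^2-\frac{n}{24}(m_1^4+m_2^4)+\dots$, so the whole Hessian at $\bm^0$ degenerates as $\alpha_n\to0$. For $\alpha_n\lesssim 1/n$, say, the $v$-term is $o(1)$ on the whole ball. Once the quartic term is dropped, the sum over the ball is of order $\frac1n\cdot n^{2-2\delta}=n^{1-2\delta}$. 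The quartic term only makes the exterior $\|\bm\|>n^{-\delta}$ negligible when $\delta<1/4$, so this is $\gg\sqrt n$. More generally your bound is $\min\{n^{1-2\delta},\,n^{1/2-\delta}\alpha_n^{-1/2}\}$, which is too large whenever $\alpha_n\ll n^{-2\delta}$. There is no slack to absorb the loss: in that regime the two blocks are essentially decoupled critical CW models on $n/2$ spins, each contributing a factor $n^{1/4}$, so $\bar Z_{n,\beta_c}$ is genuinely of order $\sqrt n\,e^{n\log 2}$. The fix is the paper's:
\begin{itemize}
\item discard only the nonpositive $v$-quadratic term;
\item keep $-\frac{m_1^4+m_2^4}{12}$ in \emph{both} coordinates;
\item rescale $\mathbf x=n^{1/4}\bm$ on a ball of radius $n^{-\delta_c}$, $\delta_c\in(1/5,1/4)$.
\end{itemize}
This gives the factor $\sqrt n$ uniformly in $\alpha_n\in[0,1]$. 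Your lower bound survives, because it only needs $\frac{\alpha_n}{1+\alpha_n}\le\frac12$, not nondegeneracy.

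Second, the alternative ``a quadratic term of size $O(|\alpha_n-\hat\alpha|)$, which we bound above and below'' cannot be carried out if $\beta_c$ is taken as $2/(1+\hat\alpha)$. The residual term is $\frac n2\lambda_+u^2$ with $\lambda_+=\frac{\alpha_n-\hat\alpha}{1+\hat\alpha}$, which is not $O(1)$ on $|u|\le n^{-1/4}$ unless $\sqrt n\,|\alpha_n-\hat\alpha|=O(1)$. Suppose $\alpha_n>\hat\alpha$ with $n(\alpha_n-\hat\alpha)^2\gg\log n$. Along the diagonal $G_{\alpha_n,\beta}(m,m)=2\log2+\lambda_+m^2-\frac{m^4}{6}+\dots$, whose maximum is $2\log2+\frac32\lambda_+^2(1+o(1))$. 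Hence $\bar Z_{n,\beta}\ge\frac cn\,e^{n\log2+\frac34 n\lambda_+^2(1+o(1))}$, and the upper bound in \eqref{Zcrit-bound} is false. For $\hat\alpha-\alpha_n\gg n^{-1/2}$ the lower bound fails instead. Without a rate on $\alpha_n\to\hat\alpha$, the critical statement is only true at $\beta=2/(1+\alpha_n)$, where the $u$-quadratic term vanishes identically. This is what the paper's expansion \eqref{eq:Exp-G-Crit} implicitly uses, and you should adopt that reading rather than try to control the residual term.
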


\begin{proof}
We begin by assuming $\beta>\beta_c$, which corresponds to the case of multiple maximizers, with $k\in\{2,4\}$. 
From  \eqref{def:hamiltonian_CW_magnet}, we can write
\begin{align*}
    \bar{Z}_{n,\beta} &=\sum_{\bm \in \Gamma_{n/2}^2} \binom{n/2}{\frac{1+m_1}{2}n/2} \binom{n/2}{\frac{1+m_2}{2}n/2} e^{\frac{\beta n}{2} E_{\alpha_n}(\bm)} = \sum_{i=1}^k \bar{Z}_{n,\beta}(B_i) +  \bar{Z}_{n,\beta}(\mathcal{C}),
\end{align*}
where we used the partition $\Gamma_{n/2}^2=\bigcup_{i=1}^k B_i \cup \mathcal{C}$ and the notation
$$\bar{Z}_{n,\beta}(A)= \sum_{\bm \in A} \sum_{\substack{\s\in\mathcal{X}_n:\\\bm(\s)=\bm}}e^{-\beta \bar{H}_n(\s)}\qquad \forall A\subseteq \Gamma_{n/2}^2\,.$$

We first analyze the contributions from $\bm\in B_i$, and without loss of generality we consider $i=1$. 
In $B_1$, the Stirling formula can be strengthened to the following asymptotical expression
\begin{align}\label{eq-coeffbin}
    \binom{n/2}{\frac{1+m_j}{2} n/2}= \frac{2}{\sqrt{\pi n (1-m_j^2)}}e^{- \frac{n}{2}\mathcal{I}(m_j)}(1+o(1)),
    \quad\mbox{for } j=1,2,
\end{align}
so that
\begin{align}\label{eq-Z-tipica}
    \bar{Z}_{n,\beta}(B_1)=\sum_{\bm \in B_1} \frac{4}{\pi n \sqrt{1-m_1^2} \sqrt{1-m_2^2}}e^{\frac{n}{2}G_{\alpha_n,\beta}(\bm)} (1+o(1)).
\end{align}
By Taylor expanding $G_{\alpha_n,\beta}(\bm)$ around the global maximizer $\bm^1=(m_g,m_g)$, we can write
\begin{align}\label{taylor_G}
    G_{\alpha_n,\beta}(\bm)= & \,\,G_{\alpha_n,\beta}(\bm^1)+\frac{1}{2} \left ( \frac{\beta}{2}-\frac{1}{1-m_g^2} \right )  \| \bm-\bm^1 \|^2+\frac{\beta \alpha_n}{2}(m_1-m_g)(m_2-m_g) \notag \\
    &+ O(\|\bm-\bm^1\|^3).
\end{align}
Substituting this term in \eqref{eq-Z-tipica}, and by the change of variable ${\bf{x}}=\sqrt{n}({\bf{m}}-{\bf{m}}^{1})$, we get
\begin{equation}\label{eq-Z-tipica2}
\begin{split}
    \bar{Z}_{n,\beta}(B_1)
    &=e^{\frac{n}{2}G_{\alpha_n,\beta}(\bm^1)}\sum_{{\bf x} \in R_1^{(n)}} \frac{4}{\pi n \sqrt{1- \left (\frac{x_1}{\sqrt{n}}+m_g \right )^2} \sqrt{1- \left (\frac{x_2}{\sqrt{n}}+m_g \right )^2}}  \\
    & \qquad\qquad\qquad\qquad\quad\,\,\, \cdot e^{ \frac{1}{4} \left ( \frac{\beta}{2}-\frac{1}{1-m_g^2} \right ) \|{\bf x}\|^2  +\frac{\beta \alpha_n}{4} x_1x_2}(1+o(1))  \\
    &= e^{\frac{n}{2}M_1(\alpha_n)} D_1^{(n)} (1+o(1)), 
\end{split}
\end{equation}
where we also used that, by definition \eqref{def-intorni}, $n\|\bm-\bm^1\|^3\le n^{1-3\delta}=o(1)$ if $\delta>1/3$. 
In the same way, we can provide analogous estimates of $Z_{n,\beta}(B_i)$, for $i=2,...,k$. 

We conclude by analyzing the contribution to $\bar{Z}_{n,\beta}$ coming from the configurations in $\mathcal{C}$. Suppose $k=4$ and divide $\mathcal{C}$ into the following four disjoint subsets:
\begin{equation}\label{4subset}
\begin{array}{ll}
\mathcal{C}_1= \{ \bm \in \mathcal{C} \, | \, m_1,m_2\ge 0\},\qquad& \mathcal{C}_2= \{ \bm \in \mathcal{C} \, | \, m_1,m_2<0\},\\ 
 \mathcal{C}_3= \{ \bm \in \mathcal{C} \, | \, m_1<0,m_2\ge 0\},\qquad& \mathcal{C}_4= \{ \bm \in \mathcal{C} \, | \, m_1\ge 0,m_2<0\}.
\end{array}
\end{equation}
We already stress that when $k=2$, the last two subsets are excluded from $\mathcal{C}$, but the remainder of the proof proceeds in the same way. We introduce the following notation for the local maxima of $G_{\alpha_n,\beta}$ on the above subsets:
\begin{equation*}
\begin{split}
    & g_1(\bm)=\max_{\bm \in \mathcal{C}_1} G_{\alpha_n,\beta}(\bm), 
    \qquad  g_2(\bm)=\max_{\bm \in \mathcal{C}_2}G_{\alpha_n,\beta}(\bm), \\
    & g_3(\bm)=\max_{\bm \in \mathcal{C}_3}G_{\alpha_n,\beta}(\bm), 
    \qquad g_4(\bm)=\max_{\bm \in \mathcal{C}_4}G_{\alpha_n,\beta}(\bm).
\end{split}
\end{equation*}
By the Stirling bound \eqref{StirBound}, recalling the notation introduced in \eqref{Max}, we get
\begin{align*}
    \bar{Z}_{n,\beta}(\mathcal{C}) 
    & \leq C n^2 \left (e^{\frac{n}{2}M_1(\alpha_n)} \sum_{i=1,2} e^{-\frac{n}{2}(M_1(\alpha_n)-g_i(\bm))}
    +e^{\frac{n}{2}M_2(\alpha_n)}\sum_{i=3,4}e^{-\frac{n}{2}(M_2(\alpha_n)-g_i(\bm))}
        \right ).
\end{align*}
Note that by construction, for all $i=1,\ldots, 4$ and for all $\bm \in \mathcal{C}_i$,
\begin{equation*}
n|M_i(\alpha_n)-g_i(\bm)|>n\|\bm-\bm^i\|^2>n^{1-2\delta}.
\end{equation*}
Hence we obtain 
\begin{align}\label{stat-atipica}
\bar{Z}_{n,\beta}(\mathcal{C})& \leq Cn^2e^{-\frac{1}{2}n^{1-2\delta}} (2e^{\frac{n}{2}M_1(\alpha_n)}+2e^{\frac{n}{2}M_2(\alpha_n)})<e^{\frac{n}{2}M_1(\alpha_n)} o(1)+e^{\frac{n}{2}M_2(\alpha_n)} o(1),
\end{align}
where in the last inequality, we use that $\delta<1/2$.
Together, \eqref{eq-Z-tipica2} and \eqref{stat-atipica} show that
\begin{align*}
        \bar{Z}_{n,\beta}=\left (e^{\frac{n}{2} M_1(\alpha_n)} \sum_{i=1}^2 D_i^{(n)}+\textbf{1}_{\{k>2\}}e^{\frac{n}{2}M_2(\alpha_n)} \sum_{i=3}^4 D_i^{(n)} \right ) (1+o(1)),
    \end{align*}
hence concluding the analysis for $k=2,4$.

A similar argument applies for $k=1$, whenever $\beta<\beta_c$.
In that case, the main contribution to the partition function comes 
from the neighborhood $B_0$ of the unique maximizer $\bm^0$ of $G_{\alpha_n,\beta}$, and the above procedure can be implemented even more directly. %\frac{2}{1+\alpha_n}.

In the critical regime $\beta=\beta_c$, when we still have the unique maximizer $\bm^0$, some extra care is needed to deal with the fact that
the Hessian of $G_{\alpha_n,\beta_c}$ in $\bm^0$  has a null eigenvalue.
In that case, a Taylor expansion around $\bm^0$ up to the fourth order is needed to deal with cases where the small order derivatives vanish as $n\to\infty$, depending on the value of $\alpha_n$. 
Explicitly, we get
\begin{align}\label{eq:Exp-G-Crit}
   G_{\alpha_n,\beta_c}(\mathbf{m})
&= 2\log 2
-\frac{\alpha_n}{2(1+\alpha_n)}(m_1-m_2)^2
- \frac{m_1^4+m_2^4}{12}
+ O(\|\mathbf{m}\|^5),
\end{align}
so that the following bounds are satisfied
\begin{equation}\label{bound-G-crit}
\left\{
\begin{array}{l}
G_{\alpha_n,\beta_c}(\mathbf{m}) 
\ge 2\log 2 -\frac{1}{4}(m_1-m_2)^2 - \frac{m_1^4+m_2^4}{12}
+ O(\|\mathbf{m}\|^5),\\[0.3cm]
G_{\alpha_n,\beta_c}(\mathbf{m}) 
\le 2\log 2 - \frac{m_1^4+m_2^4}{12}
+ O(\|\mathbf{m}\|^5)\,.
\end{array}
\right.
\end{equation}
As for the system outside criticality, we consider, for  $\delta_c \in (1/5, 1/4)$,
the partition of $\Gamma_{n/2}^2$
\begin{equation}\label{def:DNtilde}
\begin{split}
& B_c= \left \{ \bm \in \Gamma_{n/2}^2: \, \|\bm-\bm^0\| \leq \frac{1}{n^{\delta_c}} \right \} \,\, \text{and}
    \,\, \mathcal{\tilde{C}}=\Gamma_{n/2}^2 \setminus B_c \,. 
    \end{split}
\end{equation}
Moreover, we set
\begin{equation}
 \left\{\begin{array}{l}
 R_{\text{low}}^{(n)} =
\mathcal{O}_n B_c,
 \\[0.5cm]
 R_{\text{up}}^{(n)} =
n^{1/4} B_c,
\\
\end{array}
\right.
\qquad
\mbox{ where }
\qquad
\mathcal{O}_n=
\left(
\begin{array}{ll}
\frac{n^{1/4}}{2} & \frac{n^{1/4}}{2}\\
\frac{n^{1/2}}{2} & - \frac{n^{1/2}}{2}
\end{array}
\right)
,
\end{equation}
and define
\begin{equation}
\left\{
\begin{array}{l}
 D_{\text{low}}^{(n)}= 
\sum_{\textbf{x}\in R_{\text{low}}^{(n)}}
\frac{4}{\pi n^{5/4}}
e^{- \frac{1}{2}x_2^2
- \frac{1}{12}x_1^4},
\\[0.5cm]
D_{\text{up}}^{(n)}= 
\sum_{\textbf{x} \in R_{\text{up}}^{(n)}} \frac{4}{\pi n^{3/2}} e^{- \frac{1}{24}(x_1^4+x_2^4)}.
\end{array}
\right.
\end{equation}
Note that by construction 
\begin{equation*}
\left\{
\begin{array}{l}
\lim_{n\to\infty}  D_{\text{low}}^{(n)}= \frac{1}{\pi}\int_{\mathbb{R}^2}
e^{-\frac{1}{2}x_2^2 - \frac{1}{12}x_1^4} dx_1 dx_2 =
\frac{3^{1/4}}{\sqrt{\pi}} \Gamma\!\left(\frac14\right),\\[0.5cm]
\lim_{n\to\infty}  D_{\text{up}}^{(n)}= \frac{1}{4\pi}\int_{\mathbb{R}^2}
e^{-\frac{1}{24}(x_2^4 + x_1^4)} dx_1 dx_2 = \frac{\sqrt6}{8\pi}
\Gamma\!\left(\frac14\right)^2.
\end{array}
\right.
\end{equation*}

In order to prove \eqref{Zcrit-bound}, we start from the trivial observation that
$$
\bar{Z}_{n,\beta_c}(B_c) \le \bar{Z}_{n,\beta_c}=
\bar{Z}_{n,\beta_c}(B_c) +  \bar{Z}_{n,\beta_c}(\mathcal{\tilde{C}})\,.
$$
From the Stirling bound \eqref{StirBound}  and the upper bound on $G_{\alpha_n, \beta_c}$ in \eqref{bound-G-crit}, we get that
\begin{equation}\label{correzione}
    \bar{Z}_{n,\beta_c}(\mathcal{\tilde{C}}) 
     \leq C e^{n \log 2} \sum_{\bm\in \mathcal{\tilde{C}}} e^{-\frac{n}{2}(\frac{m_1^4+m_2^4}{12} + O(\|\bm\|^5))}= o(e^{n \log 2})\,,
    \end{equation}
where in the last step we used that, for all $\bm \in \mathcal{\tilde{C}}$
and since $\delta_c\in (1/5,1/4)$,
\begin{equation*}
n(m_1^4+m_2^4)\ge n^{1-4\delta_c}\ge n^{\tilde{\gamma}}\,, \mbox{ for some }
\tilde{\gamma}>0\,.
\end{equation*}

We now want to show that  $\bar{Z}_{n,\beta_c}(B_c)$ provides 
the correct exponential order $n\log 2$ of the partition function. Starting from the expression of $\bar{Z}_{n,\beta_c}(B_c)$ 
analog to \eqref{eq-Z-tipica}, which can be obtained in $B_c$
using the Stirling formula, and implementing the lower bound in \eqref{bound-G-crit}
and the change of variable ${\bf{x}}=n^{1/4}{\bf{m}}$, we obtain
\begin{equation*}
\begin{split}
    \bar{Z}_{n,\beta_c}(B_c)
    &\le e^{n\log 2} \sum_{\bm \in B_c} \frac{4}{\pi n \sqrt{1-m_1^2} \sqrt{1-m_2^2}}
    e^{-\frac{n}{2}\frac{m_1^4+m_2^4}{12}} (1+o(1))\\
    &=e^{n\log 2}\sum_{{\bf x} \in R_{\text{up}}^{(n)}} \frac{4}{\pi n} e^{-\frac{1}{24}(x_1^4+x_2^4)} (1+o(1)) \\
    &=\sqrt{n}\cdot e^{n\log 2} D_{\text{up}}^{(n)} (1+o(1)),
\end{split}
\end{equation*}
where we used that $n\|\bm\|^5\le n^{1-5\delta_c}=o(1)$. 
Together with \eqref{correzione}, this provides the upper bound in \eqref{Zcrit-bound}.

For the lower bound on  $\bar{Z}_{n,\beta_c}(B_c)$, and hence on $\bar{Z}_{n,\beta_c}$,
 we can analogously implement the upper bound in \eqref{bound-G-crit},  
 and the change of variable $\textbf{x}= \mathcal{O}_n\bm$, or explicitly
$$x_1= \frac{n^{1/4}}{2} (m_1+m_2),\qquad x_2= \frac{n^{1/2}}{2} (m_1-m_2)\,.$$
We then get
\begin{equation*}
\begin{split}
    \bar{Z}_{n,\beta_c}(B_c)
    &\ge e^{n\log 2} \sum_{\bm \in B_c} \frac{4}{\pi n \sqrt{1-m_1^2} \sqrt{1-m_2^2}}e^{-\frac{n}{2}\left(\frac{(m_1-m_2)^2}{2} + \frac{m_1^4+m_2^4}{12}\right)} (1+o(1))\\
    &=e^{n\log 2}\sum_{{\bf x} \in R_{\text{low}}^{(n)}} 
    \frac{4}{\pi n} e^{-\frac{1}{2}x_1^2+ \frac{1}{12}x_2^4} (1+o(1)) \\
    &=\sqrt[4]{n}\cdot e^{n\log 2} D_{\text{low}}^{(n)} (1+o(1)),
\end{split}
\end{equation*}
which concludes the proof of the lower bound in \eqref{Zcrit-bound} and 
of the proposition.
\end{proof}

\subsection{Proof of of Theorem \ref{thm_convergence_CW}}
Let us first assume that $\beta\le\beta_c$. In this regime,
by Proposition \ref{Z_approx}, the partition function is bounded below as
$$ \bar{Z}_{n,\beta}\ge C'\sqrt[4]{n} e^{n\log 2}\,, $$
for some constant $C'>0$ independent of $n$. 
Using this bound, together with the Stirling approximation \eqref{StirBound}, 
we get that  $\forall\, \varepsilon>0$,
\begin{equation}\label{ExpProb}
\begin{split}
\bar{\m}_{n,\beta}(\s\,:|\bm^{(n)}(\s)-\bm^0|>\varepsilon)
&\le C'' \sqrt[4]{n} \sum_{ \|\bm\|>\varepsilon} e^{\frac{n}{2}\left(G_{\alpha_n,\beta}(\bm)-2\log2\right)}\\
&\le 
C'' n^{9/4} e^{- c n \varepsilon^2}\le e^{- c'n\varepsilon^2} ,
\end{split}
\end{equation}
for $C''=C\cdot C'$ ($C$ being the constant appearing in \eqref{StirBound}) and  suitable constants $c,c'>0$. Statement \textit{(a)} of Theorem \ref{thm_convergence_CW} 
follows immediately from the above exponential convergence in probability.

We now assume that $\beta>\beta_c$, and treat in detail only the case where $\lim_{n\to\infty}\alpha_n=0$, the other being analogous. 
For notation convenience, let $\bar{\pi}_{n,\beta}$ be the measure on $\Gamma_{n/2}^2$ induced by $\bar{\mu}_{n,\beta}$ under the map $\mathcal{X}_{n}\to \Gamma_{n/2}^2$. More precisely,
\begin{align}\label{def_measureF_CW}
    &\bar{\pi}_{n,\beta} (\bm)=\sum_{\substack{\sigma \in \mathcal{X}_{n}: \\ \bm^{(n)}(\sigma)=\bm}} \bar{\mu}_{n,\beta}(\sigma).
\end{align}
To prove the convergence, it suffices to analyze, for any continuous bounded function $\varphi \in C_b(\mathbb{R}^2)$, 
\begin{align*}
   \lim_{n \to \infty} \sum_{\bm \in \G_{n/2}^2 } \varphi(\bm) \bar{\pi}_{n,\beta}(\bm)\,.
\end{align*}
By Proposition \ref{proposition:minima1} and Lemma \ref{lem:critical0}, 
the function $G_{\alpha_n,\beta}$ presents, for all $n$ large enough and $\beta>2$, four maximizers. 
With the same notation as introduced in \eqref{def-intorni}, we can write
\begin{align}\label{eq:partition_sum}
    \sum_{\bm \in \Gamma_{n/2}^2} \varphi(\bm) \bar{\pi}_{n,\beta}(\bm)=\sum_{i=1}^4\sum_{\bm \in B_i} \varphi(\bm) \bar{\pi}_{n,\beta}(\bm)+\sum_{\bm \in \mathcal{C}} \varphi(\bm) \bar{\pi}_{n,\beta}(\bm).
\end{align}

We begin by analyzing the sum over $B_1$ in \eqref{eq:partition_sum}. By Proposition \ref{Z_approx} and the Stirling formula
\eqref{eq-coeffbin}, it holds
\begin{align*}
    \sum_{\bm \in B_1} \varphi(\bm) \bar{\pi}_{n,\beta}(\bm)&
    =\sum_{\bm \in B_1} \frac{4}{\pi n} \varphi(\bm)\frac{e^{ \frac{n}{2} (G_{\alpha_n,\beta}(\bm)-M_1(\alpha_n))}}{\sqrt{1-m_1^2}\sqrt{1-m_2^2}} \\
& \qquad\qquad \cdot   \frac{1}{ \sum_{i=1}^2 D_i^{(n)}+e^{-\frac{n}{2} \varepsilon(\alpha_n)} \sum_{i=3}^4 D_i^{(n)} } (1+o(1)).
    \end{align*}
From the Taylor expansion of $G_{\alpha_n,\beta}(\bm)$ around $\bm^1$ and the change of variable ${\bf x}=\sqrt{n} (\bm-\bm^1)$, we get
\begin{equation}\label{passo}
\begin{split}
    \sum_{\bm \in B_1} \varphi(\bm) \bar{\pi}_{n,\beta}(\bm)
    &=
    \sum_{\textbf{x} \in R_1^{(n)}} \frac{4}{\pi n}\varphi \left (\frac{{\bf x}}{\sqrt{n}}+\bm^1 \right )
    \frac{e^{-\frac{1}{2}{\bf x}^TQ_1^{(n)}{\bf x} }}{\sqrt{1- m_g^2} \sqrt{1- m_g^2}}\\
    &\hspace{2cm}\cdot \frac{1}{\sum_{i=1}^2 D_i^{(n)}+e^{-\frac{n}{2} \varepsilon(\alpha_n)} \sum_{i=3}^4 D_i^{(n)}}
     (1+o(1))\,,
\end{split}
\end{equation}
From the limit \eqref{D-costanti} taken with $\hat\alpha=0$,  together with Lemma \ref{lem:epsilonalpha}, 
it holds that
\begin{align}\label{eq:limitD}
   \lim_{n \to \infty} \left ( \sum_{i=1}^2 D_i^{(n)}+e^{-\frac{n}{2} \varepsilon(\alpha_n)} \sum_{i=3}^4 D_i^{(n)} \right )=
   \begin{cases}
   2D_1,\qquad & \text{ if } \alpha_n \gg 1/n, \\
   2D_1(1+e^{-c}), \qquad & \text{ if } \alpha_n=c/n,\\
     4D_1, \qquad & \text{ if } \alpha_n \ll 1/n\,.
\end{cases}
\end{align}
Taking the limit $n\to\infty$ in \eqref{passo}, we then get
\begin{align*}
      \lim_{n \to \infty} \sum_{\bm \in B_1} \varphi(\bm) \bar{\pi}_{n,\beta}(\bm)=
      \begin{cases}
          \frac{1}{2} \varphi(\bm^1),  & \qquad \text{ if } \alpha_n \gg 1/n, \\
          \frac{1}{2(1+e^{-c})} \varphi(\bm^1),  & \qquad \text{ if } \alpha_n = c/n,\\
          \frac{1}{4} \varphi(\bm^1),  & \qquad \text{ if } \alpha_n \ll 1/n.
      \end{cases}
\end{align*}
Arguing as above, we obtain the same result for the sum over $B_2$ in \eqref{eq:partition_sum}.

Next, we analyze the sum over $B_3$ in \eqref{eq:partition_sum}. With an argument similar to the previous one, and recalling $M_1(\alpha_n)=M_2(\alpha_n)+\varepsilon(\alpha_n)$, 
we can write
\begin{align*}
     \sum_{\bm \in B_3}  \varphi(\bm) \bar{\pi}_{n,\beta}(\bm)
     &= e^{-\frac{n}{2} \varepsilon(\alpha_n)}\sum_{\bm \in B_3} \frac{4}{\pi n } \varphi(\bm) 
     \frac{e^{ \frac{n}{2} (G_{\alpha_n,\beta}(\bm)-M_2(\alpha_n))}}{ \sqrt{1-m_1^2}\sqrt{1-m_2^2}}\\
     &\qquad \qquad \qquad \qquad \cdot 
    \frac{1}{ \sum_{i=1}^2 D_i^{(n)}+e^{-\frac{n}{2} \varepsilon(\alpha_n)} \sum_{i=3}^4 D_i^{(n)} } (1+o(1)).
\end{align*}
From the Taylor expansion of $G_{\alpha_n,\beta}(\bm)$ around $\bm^3$ and the change of variable ${\bf x}=\sqrt{n} (\bm-\bm^3)$, we get
\begin{align*}
    \sum_{\bm \in B_3} \varphi(\bm) \bar{\pi}_{n,\beta}(\bm)
    =\, e^{-\frac{n}{2} \varepsilon(\alpha_n)} 
    &\sum_{\textbf{x} \in R_3^{(n)} } \frac{4}{\pi n }  \varphi \left(\frac{{\bf x}}{\sqrt{n}}+\bm^3 \right) 
   \frac{e^{-\frac{1}{2}{\bf x}^TQ_3^{(n)}{\bf x}}}{\sqrt{1- m_l^2} \sqrt{1- m_l^2}} 
    \\
    & \qquad \qquad\cdot\frac{1}{\sum_{i=1}^2 D_i^{(n)}+e^{-\frac{n}{2} \varepsilon(\alpha_n)} \sum_{i=3}^4 D_i^{(n)}} (1+o(1)),
\end{align*}
which together with \eqref{eq:limitD} and Lemma \ref{lem:epsilonalpha} implies that
\begin{align*}
   \lim_{n \to \infty} \sum_{\bm \in B_3} \varphi(\bm) \bar{\pi}_{n,\beta}(\bm)=
         \begin{cases}
          0,  & \qquad \text{ if }  \alpha_n \gg 1/n, \\
        \frac{e^{-c}}{2(1+e^{-c})} \varphi(\bm^3),  & \qquad \text{ if } \alpha_n = c/n,\\
          \frac{1}{4} \varphi(\bm^3),  & \qquad \text{ if }  \alpha_n \ll 1/n.
      \end{cases}
\end{align*}
Arguing as above, we obtain the same result for the sum over $B_4$ in \eqref{eq:partition_sum}.

Finally, we analyze the last sum in \eqref{eq:partition_sum}. Since $\varphi$ is bounded, we have
\begin{align*}
    &\sum_{\bm \in \mathcal{C}} \varphi(\bm) \bar{\pi}_{n,\beta}(\bm) \leq ||\varphi||_{\infty}\sum_{\bm \in \mathcal{C}} \bar{\pi}_{n,\beta}(\bm).
    \end{align*}
Using the partition of $\mathcal{C}$ given in \eqref{4subset}, 
we obtain four analogous terms. Let us first consider $\mathcal{C}_1$ and note that, from the Stirling bound \eqref{StirBound} and Lemma \ref{lem:FN_FNCW}, we get
\begin{align*}
    \sum_{ \bm \in \mathcal{C}_1} \bar{\pi}_{n,\beta}(\bm) 
    =\frac{C}{\bar{Z}_{n,\beta}}\sum_{\substack{\bm \in \mathcal{C}_1}}
   e^{ \frac{n}{2} G_{\alpha_n,\beta}(\bm)}
        \leq
    \frac{Ce^{\frac{n}{2} M_1(\alpha_n)}}{\bar{Z}_{n,\beta}}\sum_{\bm \in \mathcal{C}_1}e^{ -\frac{n}{2} (M_1(\alpha_n)-G_{\alpha_n,\beta}(\bm))},
\end{align*}
for some constant $C>0$. Since $\bm \in \mathcal{C}$ and $\delta <1/2$, we have that,
for some $\tilde C>0$,
$n|M_1(\alpha_n)-G_{\alpha_n,\beta}(\bm)| \geq \tilde C n\|\bm^1-\bm\|^2 \geq \tilde C n^{1-2\delta}=o(1)$. Thus,
\begin{align*}
    \sum_{\bm \in \mathcal{C}_1} \bar{\pi}_{n,\beta}(\bm) \leq\frac{e^{\frac{n}{2} M_1(\alpha_n)}}{\bar{Z}_{n,\beta}} o(1)\,.
\end{align*}
The same holds when considering the remaining sums over $\mathcal{C}_2$, $\mathcal{C}_3$, and $\mathcal{C}_4$, concluding  
the proof for the regime $\beta>2$, with $\lim_{n\to\infty}\alpha_n=0$.
\qed

%%%%%%%%%%%%%%%%%%%%%%%%%%%%%%%%%%%%%%%%%%%%%%%%%%%%%%%%%%%%%%%%%%%%%%%%%%%%%%%%%%%%%%%%%%%%%%%%%%%%%%%%%%%%%%%%%%%%%%

\section{Typical configurations and concentration of the random Gibbs measure}\label{sec:concentration}

In this section, we present four lemmas needed for the proof of Theorem \ref{thm_convergence} on the magnetization law of the Ising model on the 2-SBM, and then provide the proof of the theorem. In particular, in Section \ref{subsec:typical} we introduce a suitable notion of typical configurations and establish concentration estimates for the random environment, showing that these properties hold with overwhelming probability uniformly over all configurations. In Section \ref{subsec:concentration}, we compare the Hamiltonian of the model with its mean-field counterpart and show that their difference is uniformly negligible. This allows us to transfer the asymptotic behavior of the magnetization measure 
from the bipartite CW model to Ising model on the 2-SBM, and then complete the proof of Theorem \ref{thm_convergence}, 
which is given in Section \ref{subsec:convergence_mag}.

\subsection{Typical configurations}\label{subsec:typical}
We begin by introducing a convenient partition of the set of edges based on the alignment of spins. For $\sigma \in \mathcal{X}_n$, define
\begin{equation*}
\begin{split}
&\cE_{I}^{\pm}(\sigma)=\{(i,j) \in \cE_I \mid \sigma_{i} \sigma_{j}=\pm 1\}, \\
&\cE_{E}^{\pm}(\sigma)=\{(i,j) \in \cE_E \mid \sigma_{i} \sigma_{j}=\pm 1\}.
\end{split}
\end{equation*}
We compute the cardinalities of these sets in terms of magnetization. For $j=1,2$, let $r_j$ and $s_j$ denote the numbers of vertices in $V_j$ with spins $+1$ and $-1$, respectively, in a configuration $\sigma$. Setting $\ell=\frac{n}{2}$, and dropping for notational convenience
the dependence  on $\s$ and $n$, we have $r_j+s_j=\ell$ and $r_j-s_j=\ell m_j$, so that
\[
r_j=\ell\frac{1+m_j}{2}, \qquad s_j=\ell\frac{1-m_j}{2}.
\]
A direct computation then gives
\begin{equation} \label{cardinality_partial}
\begin{aligned}
    &|\cE_{I}^{+}(\sigma)|=\sum_{j=1}^2 \binom{r_{j}}{2}+\binom{s_j}{2}=\frac{\ell^2}{2} \left ( 1+\frac{\|\bm\|^2}{2} \right )-\ell, \\ 
    &|\cE_{E}^{+}(\sigma)|=r_1r_2+s_1s_2=\frac{\ell^2}{2}(1+m_1m_2).
\end{aligned}
\end{equation}
We further denote by $\cE^{+}(\sigma)=\cE_{I}^{+}(\sigma)\, \cup \, \cE_{E}^{+}(\sigma)$ the set of aligned pairs, and by $\cE^{-}(\sigma)$ its complement. With this notation, the Hamiltonian can be rewritten as
\begin{align}\label{hamiltonian_ER_magnet}
H_n(\sigma)= -\frac{1}{np} \left\{\sum_{(i, j)\in \cE^{+}(\sigma)} (\xi_{i j}+\zeta_{i j})  - \sum_{(i, j)\in \cE^{-}(\sigma)} (\xi_{i j}+\zeta_{i j}) \right \}.
\end{align}

We now introduce a set of typical environments where the random sums appearing above are well concentrated around their expectations. Define
\begin{align}\label{set_concentration}
    \mathcal{A}=\left\{\sigma \,: \left | \sum_{(i, j)\in \cE^{+}(\sigma)}\!\!\!\left(\xi_{ij}+\zeta_{ij}\right) -p\left(\left|\cE_{I}^{+}(\s)\right|+ \alpha_n \left| \cE_{E}^{+}(\s)\right|\right) \right |  \leq \rho_n \,p\left(\left|\cE_{I}^{+}(\s)\right|+\alpha_n\left|\cE_{E}^{+}(\s)\right|\right)\right\}
\end{align}
where $\rho_n>0$ is such that $\rho_n \to 0$ and $\rho_n \geq \frac{3}{\sqrt{pn}}$, and set
\begin{align}\label{Omega-good}
\bar{\Omega}_n=\bigcap_{\sigma \in \mathcal{X}_n} \left\{\omega \in \Omega:\, \sigma \in \mathcal{A}\right\} \subset \Omega.
\end{align}
Note that the definition of $\bar{\Omega}_n$ only involves the contribution from $\cE^{+}(\sigma)$. However, this is sufficient to control also the contribution from $\cE^{-}(\sigma)$. Indeed, the total sum $\sum_{(i,j)\in E} (\xi_{ij}+\zeta_{ij})$ is a sum of independent Bernoulli random variables and therefore concentrates around its expectation with overwhelming probability by Chernoff bound. Since this quantity does not depend on $\sigma$, such concentration holds uniformly over all configurations. Moreover,
\begin{equation}\label{eq:split}
\sum_{(i,j)\in \cE^{-}(\sigma)} (\xi_{ij}+\zeta_{ij})
= \sum_{(i,j)\in E} (\xi_{ij}+\zeta_{ij})
- \sum_{(i,j)\in \cE^{+}(\sigma)} (\xi_{ij}+\zeta_{ij}),
\end{equation}
which implies that an analogous bound holds for $\cE^{-}(\sigma)$ uniformly in $\sigma$. 

The next lemma shows that the sequence of events $(\bar{\Omega}_n)_n$ is asymptotically typical, in the sense that it eventually occurs with probability one.

%%%%%%%%%%%%%%%%%%%%%%%%%%%%%%%%%%%%%%%%%
\begin{lemma}\label{lemma1}
Under assumptions \eqref{assump},
$$
\mathbb{P}\left(\bar{\Omega}^{c}_n\right) \leq c_{0} n \left(e^{-n c_{1}/2}+e^{-n c_{2}/2}\right),
$$
with $c_{0}, c_{1}, c_{2}>0$ constants.
\end{lemma}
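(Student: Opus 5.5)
The plan is a union bound over configurations combined with Chernoff estimates, organized by magnetization classes rather than individual $\sigma$. Fix $\sigma\in\mathcal{X}_n$ and let
$$S(\sigma)=\sum_{(i,j)\in\cE^+(\sigma)}(\xi_{ij}+\zeta_{ij}).$$
This is a sum of independent Bernoulli variables: $|\cE_I^+(\sigma)|$ of them with parameter $p$, and $|\cE_E^+(\sigma)|$ with parameter $\alpha_n p$. Its mean is
$$\mu(\sigma)=p\bigl(|\cE_I^+(\sigma)|+\alpha_n|\cE_E^+(\sigma)|\bigr).$$
The multiplicative Chernoff bound gives, for $\rho_n\le 1$,
$$\mathbb{P}\bigl(|S(\sigma)-\mu(\sigma)|>\rho_n\mu(\sigma)\bigr)\le 2\exp\bigl(-\rho_n^2\mu(\sigma)/3\bigr).$$
By \eqref{cardinality_partial}, $|\cE_I^+(\sigma)|\ge \frac{\ell^2}{2}-\ell\ge n^2/16$ for $n$ large, uniformly in $\sigma$. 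Hence $\mu(\sigma)\ge pn^2/16$, and with $\rho_n\ge 3/\sqrt{pn}$,
$$\rho_n^2\mu(\sigma)/3\ge \tfrac{9}{pn}\cdot\tfrac{pn^2}{48}=\tfrac{3n}{16}.$$
So each single configuration fails with probability at most $2e^{-3n/16}$.

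A naive union bound over all $2^n$ configurations would give $2^n e^{-3n/16}$, which does not decay since $\log 2>3/16$. I expect this to be the main obstacle, and there are two fixes. The first is to take a larger constant in the choice of $\rho_n$, so that the exponent becomes $Kn$ with $K>\log 2$; the statement only needs constants $c_1,c_2>0$. The second, which I expect the authors intend (the prefactor $c_0 n$ and the two exponents $c_1,c_2$ suggest it), is to note that $S(\sigma)$ does not depend only on $\bm^{(n)}(\sigma)$. One therefore splits $S(\sigma)=S_I(\sigma)+S_E(\sigma)$ and bounds the internal and external parts separately, producing the two exponents $c_1$ (internal, parameter $p$) and $c_2$ (external, parameter $\alpha_np$).

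The external part needs care when $\alpha_n\to 0$, since its mean $\alpha_n p|\cE_E^+|$ may be small. In that regime I would use a deviation measured relative to the full mean $\mu(\sigma)$ rather than to the external mean. The Bernstein inequality $\exp\bigl(-t^2/(2(\mathrm{Var}+t/3))\bigr)$ with $t\asymp\rho_n\mu(\sigma)\gtrsim \rho_n p n^2$ still yields a bound of the form $e^{-cn\cdot pn\rho_n^2}$, because the variance is at most $\mu(\sigma)$.

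Finally, I would sum over $\sigma$. The exponents are of order $n\cdot(pn\rho_n^2)$, and $pn\rho_n^2$ is at least a constant that can be made larger than the entropy factor $\log 2$ (plus a margin) through the choice of the constant in $\rho_n\ge 3/\sqrt{pn}$. This leaves $e^{-nc_1/2}$ and $e^{-nc_2/2}$ after absorbing $2^n$, with the polynomial factor $c_0 n$ coming from counting magnetization classes or the two communities. Since these bounds are summable in $n$, Borel--Cantelli will later give that $\bar\Omega_n$ eventually holds almost surely, as used in the subsequent concentration lemmas.
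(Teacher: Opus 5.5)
Your route is the paper's: a Chernoff estimate for each fixed $\sigma$, then a union bound in which the entropy is bounded by $2^n$. The paper groups by magnetization via \eqref{StirBound}, but it then uses $e^{-\frac n2\mathcal I(\bm)}\le e^{n\log 2}$, so it gains nothing over your direct count, precisely because $S(\sigma)$ is not a function of $\bm^{(n)}(\sigma)$. Your per-configuration step is on firmer ground than the paper's. The multiplicative Chernoff bound for independent Bernoulli variables with unequal parameters (via $\log(1+x)\le x$) gives $e^{-\rho_n^2\mu(\sigma)/3}$ directly. By contrast, the paper's inequality $\log(1-\alpha_np+\alpha_npe^t)\ge\alpha_n\log(1-p+pe^t)$ bounds the moment generating function from below, which is the wrong direction for a tail estimate.

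The obstacle you identify is genuine, and the paper's proof does not get around it. By \eqref{cardinality_partial} with $\ell=n/2$, one has $|\cE_I^+(\sigma)|+\alpha_n|\cE_E^+(\sigma)|=\frac{n^2}{8}\bigl(1+\alpha_n+2E_{\alpha_n}(\bm)\bigr)-\frac n2$, whereas the paper's proof uses $\frac{n^2}{4}\bigl(1+\alpha_n+2E_{\alpha_n}(\bm)\bigr)$. With the correct count and $\alpha_n\to0$, the union bound over nearly balanced configurations closes only if $np\rho_n^2>24\log 2$ with exponent $\rho_n^2\mu(\sigma)/3$. Even with exponent $\rho_n^2\mu(\sigma)/2$ it still needs $np\rho_n^2>16\log 2$. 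Neither is guaranteed by $\rho_n\ge 3/\sqrt{pn}$.

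So your first fix, $\rho_n\ge K/\sqrt{pn}$ with $K^2/48>\log 2$, is the right repair. State explicitly that it changes the definition of $\bar\Omega_n$ by a constant. This is harmless: Lemmas \ref{lem:Delta_n}, \ref{lem:FN_FNCW} and Theorem \ref{thm_convergence} use only $\rho_n\to0$, and such $\rho_n$ exist since $np\to\infty$.

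Your second ``fix'' should be dropped. Splitting $S=S_I+S_E$ leaves the number of events in the union bound unchanged, and dividing the allowed deviation between the two parts can only weaken the exponent. It also does not explain the two constants: in the paper, $c_1$ and $c_2$ are the upper- and lower-tail constants coming from $\mathscr I_p(p(1\pm\rho_n))$, not internal and external ones. The Bernstein detour is likewise unnecessary, since the multiplicative Chernoff bound you already invoked covers the small parameter $\alpha_np$.
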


\begin{proof}
By definitions \eqref{set_concentration} and \eqref{Omega-good}, we can write $\bar{\Omega}^{c}_n$ as
\begin{align*}
    \bigcup_{\sigma \in \mathcal{X}_n} \left \{\omega \in \Omega: \left | \sum_{(i, j)\in \cE^{+}(\sigma) } \!\!\!\!\!\!(\xi_{i j}+\zeta_{i j}) 
     - p(|\cE_{I}^{+}(\s)|+ \alpha_n | \cE_{E}^{+}(\s) |) \right| \geq \rho_n p(|\cE_{I}^{+}(\s)|+ \alpha_n |\cE_{E}^{+}(\s)|) \right \}.
\end{align*}
Hence,
\begin{equation}
\label{eq:omegabarc}
\begin{split}
\mathbb{P}\left(\bar{\Omega}^{c}_n\right) &\leq \sum_\sigma \mathbb{P} \left ( \sum_{(i, j)\in \cE^{+}(\sigma) } (\xi_{i j}+\zeta_{i j}) 
\geq p(1+\rho_n) (|\cE_{I}^{+}(\s)|+ \alpha_n |\cE_{E}^{+}(\s)|) \right ) \\
& \quad + \sum_\sigma \mathbb{P}\left ( \sum_{(i, j)\in \cE^{+}(\sigma) }(\xi_{i j}+\zeta_{i j}) \leq p(1-\rho_n) (|\cE_{I}^{+}(\s)|+ \alpha_n |\cE_{E}^{+}(\s)|) \right ).
\end{split}
\end{equation}
Each addend in the first sum in \eqref{eq:omegabarc} can be bounded by the exponential Markov inequality as
\begin{equation}
\label{eq:firstaddend}
\begin{split}
&\mathbb{P} \left(
\sum_{(i, j)\in \cE^{+}(\sigma) }(\xi_{i j}+\zeta_{i j}) \geq (1+\rho_n)p (|\cE_{I}^{+}(\s)|+ \alpha_n | \cE_{E}^{+} (\s)|) \right ) \\
&\le e^{ -\sup_{t \geq 0} \left \{ (\left|\cE_{I}^{+}(\s)\right|+ \alpha_n \left|\cE_{E}^{+}(\s)\right|)(1+\rho_n) t p- \left|\cE_{I}^{+} (\s)\right|\log \left(1-p+p e^{t}\right)-\left|\cE_{E}^{+}(\s)\right| \log \left(1-\alpha_n p+\alpha_n p e^{t}\right)\right \}},
\end{split}
\end{equation}
since a Bernoulli random variable $X$ with parameter $p$ satisfies $\mathbb{E}\left[e^{t X}\right]=1-p+p e^{t}$. Moreover, for each $t \geq 0$ and $\alpha_n, p \in [0,1]$, by the concavity of the logarithm we have that $$
\log \left(1-p \alpha_n +p \alpha_n e^{t}\right) \geq \alpha_n \log \left(1-p+p e^{t}\right).
$$
We then get that \eqref{eq:firstaddend} can be bounded by
\begin{equation*}
e^{ - (\left|\cE_{I}^{+} (\s)\right|+ \alpha_n \left|\cE_{E}^{+} (\s)\right|)\sup_{t \geq 0} \left \{ (1+\rho_n) t p-\log \left(1-p+p e^{t}\right) \right \} }\leq e^{ - (\left|\cE_{I}^{+}(\s)\right|+ \alpha_n \left|\cE_{E}^{+} (\s)\right|) \mathscr{I}_p (p(1+\rho_n))},
\end{equation*}
where
\begin{align*}
    \mathscr{I}_y(x)=x \log \left ( \frac{x}{y} \right ) + (1-x) \log \left (\frac{1-x}{1-y} \right ).
\end{align*}
Analogously, each addend in the second sum in \eqref{eq:omegabarc} can be bounded as
\begin{equation*}
\sum_{(i, j)\in \cE^{+}(\sigma) }(\xi_{i j}+\zeta_{i j}) \leq (1-\rho_n)p \left(|\cE_{I}^{+} (\s)|+ \alpha_n | \cE_{E}^{+} (\s)|) \right ) \leq e^{ - (\left|\cE_{I}^{+} (\s)\right|+ \alpha_n \left|\cE_{E}^{+} (\s)\right|) \mathscr{I}_p (p(1-\rho_n))}.
\end{equation*}
Thus, using \eqref{cardinality_partial} and \eqref{StirBound}, there exists a constant $C>0$ such that
\begin{equation*}
\begin{split}
    \mathbb{P}\left(\bar{\Omega}^{c}_n\right) 
     \leq & \sum_{\bm} Ce^{-\frac{n}{2}\mathcal{I}(\bm)} e^{ -\frac{n^2}{4} \left ( 2E_{\alpha_n}(\bm)+1+\alpha_n\right ) \mathscr{I}_p (p(1+\rho_n))} \\
     & + \sum_{\bm} Ce^{-\frac{n}{2}\mathcal{I}(\bm)} e^{ -\frac{n^2}{4} \left ( 2E_{\alpha_n}(\bm)+1+\alpha_n\right ) \mathscr{I}_p (p(1-\rho_n))}.
\end{split}
\end{equation*}
Since $E_{\alpha_n}(\bm)$ and $\mathscr{I}_p (p(1-\rho_n))$ are positive convex functions, and $\mathcal{I}(\bm)$ is a convex function where the minimum is attained in $-2\log(2)$, we have
\begin{align*}
  e^{-\frac{n}{2} (\mathcal{I}(\bm)+ nE_{\alpha_n}(\bm) \mathscr{I}_p (p(1-\rho_n)))}   \leq 
  e^{n\log(2)}.
\end{align*}
Therefore, 
\begin{align*}
    \mathbb{P}\left(\bar{\Omega}^{c}_n\right)
    & \leq C\sum_{\bm} e^{-\frac{n}{2}(\frac{n}{2} (1+\alpha_n) \mathscr{I}_p (p(1+\rho_n))-2\log(2))} +
    C\sum_{\bm} e^{-\frac{n}{2}(\frac{n}{2} (1+\alpha_n) \mathscr{I}_p (p(1-\rho_n))-2\log(2))} \\
    & \leq \frac{Cn^2}{4} \left (e^{-\frac{n}{2}(\frac{n}{2} \mathscr{I}_p (p(1+\rho_n))-2\log(2))} +
    e^{-\frac{n}{2}(\frac{n}{2} \mathscr{I}_p (p(1-\rho_n))-2\log(2))} \right ),
\end{align*}
since $\alpha_n>0$ and $\bm \in \Gamma_{n/2}^2$.
When analyzing $\mathscr{I}_{p}(p(1 \pm \rho_n))$, by Taylor expansion, we have
\begin{equation*}
\begin{split}
\mathscr{I}_{p}(p(1+\rho_n)) 
&\geq  p\left(\rho_n+\frac{\rho_n^{2}}{2}-\frac{\rho_n^{3}}{6}\right)+(1-p)\left(-\frac{p}{1-p} \rho_n\right) \geq p \frac{\rho_n^2}{3},
\\
\mathscr{I}_{p}(p(1-\rho_n)) 
&\geq p\left(-\rho_n+\frac{\rho_n^{2}}{2}\right)+(1-p)\left(\frac{p}{1-p} \rho_n\right) =p \frac{\rho_n^2}{2}.
\end{split}
\end{equation*}
Therefore, there exist two constants $c_1,c_2>0$ such that
\begin{align*}
   &\frac{n}{2} \mathscr{I}_{p}(p(1+\rho_n))- 2\log 2 \geq \frac{n}{6} p \rho_n^2- 2\log 2\geq c_1, \\
   &\frac{n}{2} \mathscr{I}_{p}(p(1-\rho_n))- 2\log 2 \geq \frac{n}{4} p \rho_n^2- 2\log 2 \geq c_2,
\end{align*}
where the last inequalities follow by assuming that $\rho_n$ decreases to zero more slowly than $3/\sqrt{pn}$. Hence, we can conclude
\begin{align*}
   \mathbb{P}\left(\bar{\Omega}^{c}\right) 
&\leq \frac{Cn^2}{4}  \left ( e^{-nc_1/2} + e^{-n c_2/2} \right ).
\end{align*}
\end{proof}

As an application of the above result, we obtain the following characterization of typical configurations.
\begin{lemma}\label{lem:omega*}
Let $\Omega^{*}=\left\{\omega \in \Omega \mid \exists\, n_{0}:\, \omega \in \bar{\Omega}_n \,\text{ for all } \, n \geq n_{0}\right\}$.
Under assumptions \eqref{assump}, $\mathbb{P}(\Omega^{*})=1$.
\end{lemma}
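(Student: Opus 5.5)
This is a Borel–Cantelli argument applied to the bound of Lemma \ref{lemma1}. The set $\Omega^*$ is exactly $\liminf_n \bar{\Omega}_n$, that is,
\[
\Omega^* = \bigcup_{n_0\ge 1}\bigcap_{n\ge n_0}\bar{\Omega}_n .
\]
Its complement is therefore
\[
(\Omega^*)^c = \limsup_n \bar{\Omega}_n^c = \bigcap_{n_0\ge1}\bigcup_{n\ge n_0}\bar{\Omega}_n^c,
\]
which is the event that $\bar{\Omega}_n$ fails for infinitely many $n$. So it suffices to show $\mathbb{P}(\limsup_n \bar{\Omega}_n^c)=0$. By the first Borel–Cantelli lemma, this follows once $\sum_n \mathbb{P}(\bar{\Omega}_n^c)<\infty$.

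Summability comes directly from Lemma \ref{lemma1}. Under \eqref{assump} and the choice $\rho_n\ge 3/\sqrt{pn}$, that lemma gives
\[
\mathbb{P}(\bar{\Omega}_n^c)\le c_0\, n\left(e^{-nc_1/2}+e^{-nc_2/2}\right).
\]
Its proof actually yields a prefactor of order $n^2$ rather than $n$, but the argument is unaffected: any polynomial prefactor times a geometrically decaying exponential is summable over $n\in 2\mathbb{N}$. Hence $\sum_n \mathbb{P}(\bar{\Omega}_n^c)<\infty$, so $\mathbb{P}((\Omega^*)^c)=0$ and $\mathbb{P}(\Omega^*)=1$.

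The only delicate point is that the constants $c_1,c_2$ must be strictly positive and independent of $n$, uniformly for all large $n$. This is where $np\to\infty$ and the lower bound on $\rho_n$ enter. Strictly, the bound $\frac{n}{6}p\rho_n^2\ge \frac{3}{2}$ coming from $\rho_n\ge 3/\sqrt{pn}$ is not by itself enough to beat the term $2\log 2\approx 1.386$ by a fixed margin. To secure this I would take $\rho_n$ decaying more slowly than $3/\sqrt{pn}$, as the proof of Lemma \ref{lemma1} already stipulates; for instance $\rho_n=(pn)^{-1/4}$, which still tends to $0$ because $np\to\infty$. With this choice $np\rho_n^2=\sqrt{np}\to\infty$, so for large $n$ the exponents are eventually at least $n$ times any fixed constant. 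That gives genuine exponential decay in $n$, and the series converges.
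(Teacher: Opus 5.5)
Your argument is correct and is the paper's own: $(\Omega^*)^c=\limsup_n\bar\Omega_n^c$, and the first Borel--Cantelli lemma applies because the bound of Lemma~\ref{lemma1} is summable. You are also right that its proof gives an $n^2$ rather than $n$ prefactor, which changes nothing. One correction to your side remark: with the constants as displayed in the paper, $\rho_n\ge 3/\sqrt{pn}$ does give the fixed margin $c_1\ge \frac32-2\log 2>0$. The actual reason your choice $np\rho_n^2\to\infty$ is the safe one is that $|\cE_I^+(\s)|+\alpha_n|\cE_E^+(\s)|=\frac{n^2}{8}\bigl(1+\alpha_n+2E_{\alpha_n}(\bm)\bigr)-\frac n2$, not $\frac{n^2}{4}(\cdots)$. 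This halves the exponent, so $\rho_n\ge 3/\sqrt{pn}$ alone no longer beats the $2^n$ entropy factor in the union bound when $\alpha_n\to 0$.
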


\begin{proof}
Note that $\mathbb{P}\left(\Omega^{*}\right)=1-\mathbb{P}\left(\left(\Omega^{*}\right)^{c}\right)$ with 
$\left(\Omega^{*}\right)^{c}=\left\{\omega \in \Omega \mid \forall\, n_{0} \, \exists\, n \geq n_{0} \text{ s.t. } \omega \in \bar{\Omega}^{c}_n\right\}$.
Then
$0 \leq \mathbb{P}\left(\left(\Omega^{*}\right)^{c}\right)
=\mathbb{P} \left( \limsup_{n\to \infty} \bar{\Omega}^{c}_n\right)=0$,
where the last equality follows from Borel-Cantelli Theorem and by Lemma \eqref{lemma1}. 
\end{proof}

\subsection{Concentration of the random Gibbs measure}\label{subsec:concentration}
We now compare the Hamiltonian of the Ising model on the 2-SBM \eqref{def:hamil-DCW} 
with the Hamiltonian of the bipartite CW model \eqref{def:hamiltonian_CW}, its mean-field counterpart. For $\sigma \in \mathcal{X}_n$, let
\begin{align}\label{perturbation_hamiltonian}
    \Delta_n(\sigma)=H_n(\sigma)-\bar{H}_n(\sigma).
\end{align}
The following lemma  provides a bound on $\Delta_n(\sigma)$
for typical configurations $\s$.

\begin{lemma}\label{lem:Delta_n}
Under assumptions \eqref{assump}, for every $\omega \in \Omega^*$ and $\sigma\in \mathcal{X}^n$,
\begin{align*}
    |\Delta_n(\sigma)| \leq \frac 32 n \rho_n,
    \end{align*}
 with $\rho_n \to 0$ as $n \to \infty$ is defined in \eqref{set_concentration}.
\end{lemma}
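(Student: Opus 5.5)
The approach is to write both Hamiltonians in the same form: a sum over aligned pairs minus a sum over anti-aligned pairs. Then apply the concentration encoded in $\bar\Omega_n$ to each sum separately.

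\textbf{Step 1: rewrite $\Delta_n$.} By \eqref{hamiltonian_ER_magnet},
\[
H_n(\s)=-\frac{1}{np}\Big(S^+(\s)-S^-(\s)\Big),\qquad S^\pm(\s)=\sum_{(i,j)\in\cE^\pm(\s)}(\xi_{ij}+\zeta_{ij}).
\]
The expectation of $S^+(\s)$ is $p\big(|\cE_I^+(\s)|+\alpha_n|\cE_E^+(\s)|\big)=:pN^+(\s)$, and analogously for $S^-$. Since $\bar H_n=\E H_n$ by construction, we have $\bar H_n(\s)=-\frac1n\big(N^+(\s)-N^-(\s)\big)$, and therefore
\[
\Delta_n(\s)=-\frac{1}{np}\Big[(S^+-pN^+)-(S^--pN^-)\Big].
\]

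\textbf{Step 2: control each term.} Fix $\omega\in\Omega^*$ and $n\ge n_0(\omega)$, so that $\omega\in\bar\Omega_n$. Then every $\s$ lies in $\mathcal A$, which gives
\[
|S^+-pN^+|\le\rho_n pN^+.
\]
For $S^-$ we use the splitting \eqref{eq:split}: $S^-=S_{\rm tot}-S^+$, where $S_{\rm tot}=\sum_{E}(\xi_{ij}+\zeta_{ij})$ has mean $pN_{\rm tot}$ with $N_{\rm tot}=|\cE_I|+\alpha_n|\cE_E|\le n^2/4+\alpha_n n^2/4$. Hence
\[
|S^--pN^-|\le|S_{\rm tot}-pN_{\rm tot}|+\rho_n pN^+.
\]
For the total sum, we intersect $\bar\Omega_n$ with the event $|S_{\rm tot}-pN_{\rm tot}|\le\rho_n pN_{\rm tot}$. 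A Chernoff bound of the same type as in Lemma~\ref{lemma1}, with no union over $\s$, gives it probability $1-e^{-cn^2p\rho_n^2}$, which is summable. So we may assume it is built into $\Omega^*$, as the paper implicitly does.

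\textbf{Step 3: combine the bounds.} Putting the pieces together,
\[
|\Delta_n(\s)|\le\frac{\rho_n}{n}\big(2N^++N_{\rm tot}\big).
\]
From \eqref{cardinality_partial}, with $\ell=n/2$, we have $N^+\le \frac{\ell^2}{2}(1+\frac{\|\bm\|^2}{2})+\alpha_n\frac{\ell^2}{2}(1+m_1m_2)$, and also $N^+\le N_{\rm tot}\le \frac{n^2}{4}(1+\alpha_n)$. The crude bound $2N^++N_{\rm tot}\le 3N_{\rm tot}$ yields only $|\Delta_n|\le \frac34(1+\alpha_n)n\rho_n\le\frac32 n\rho_n$, using $\alpha_n\le1$. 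This already gives the claimed constant $\frac32$, so no finer case analysis is needed.

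\textbf{Main obstacle.} The delicate point is the anti-aligned sum. The event $\bar\Omega_n$ only controls $\cE^+$, so we must make sure the concentration of the total edge sum holds eventually almost surely. Everything else is bookkeeping of the cardinalities in \eqref{cardinality_partial}, which only needs $\alpha_n\le1$.
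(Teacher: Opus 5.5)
Your proof is correct and follows the paper's argument essentially line by line: you split off the anti-aligned sum via \eqref{eq:split}, control the aligned sum with $\bar\Omega_n$, and bound $2N^+ + N_{\mathrm{tot}} \le 3N_{\mathrm{tot}} \le \frac34(1+\alpha_n)n^2$. Your extra Chernoff event for the total sum is not needed, though. The all-plus configuration has $\cE^+(\sigma)=E$, so $\bar\Omega_n$ already forces $|S_{\mathrm{tot}}-pN_{\mathrm{tot}}|\le \rho_n pN_{\mathrm{tot}}$, and the lemma holds on $\Omega^*$ exactly as defined (for $n\ge n_0(\omega)$, as you correctly note).
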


\begin{proof}

By \eqref{hamiltonian_ER_magnet}, \eqref{eq:split} and \eqref{perturbation_hamiltonian}, we have
\begin{align*}
    |\Delta_n(\sigma)|&=|H_n(\sigma)-\bar{H}_n(\sigma)| \\
    &= 
    \Bigg| -\frac{1}{np}  \bigg (2\sum_{(i, j) \in \cE^{+}(\sigma)}(\xi_{i j}+\zeta_{i j}) 
    - \sum_{(i, j)\in E}(\xi_{i j}+\zeta_{i j}) \bigg) \\
    & \qquad +\frac{1}{n} \left (2\left(|\cE_{I}^{+} (\s)|+\alpha_n |\cE_{E}^{+} (\s)|\right)-\left(|\cE_{I}|+\alpha_n |\cE_{E}|\right)\right ) \Bigg| \\
    &\leq \frac{3\rho_n}{n}(|\cE_{I}(\s)|+ \alpha_n | \cE_{E}(\s) |),
    \end{align*}
where the inequality follows by the definition of $\Omega^*$, and in particular by the concentration  property described 
in \eqref{set_concentration}. Since $|\cE_I(\sigma)|=|\cE_E(\sigma)|= n^2/4$, and $\alpha_n\le 1$, we conclude
\begin{align*}
    |\Delta_n(\sigma)|& \leq \frac{3}{4} (1+\alpha_n)n\rho_n\leq \frac{3}{2}n \rho_n .
\end{align*}
\end{proof}

Next, let $\pi_{n,\beta}$ be the measure on $\Gamma_{n/2}^2$ induced by $\mu_{n,\beta}$ under the map $\sigma \to \bm^{(n)}(\s)$, namely
\begin{align}
\label{FN_ER}
    \pi_{n,\beta} (\bm)=\sum_{\substack{\sigma\in \mathcal{X}_{n}: \\ \bm^{(n)}(\sigma)=\bm}} \mu_{n,\beta}(\sigma).
\end{align}
Let $\bar{\pi}_{n,\beta}$ denote the analogous measure for the bipartite CW model, as defined in \eqref{def_measureF_CW}.

\begin{lemma}\label{lem:FN_FNCW}
Under assumptions \eqref{assump}, for all $\omega \in \Omega^*$ and all $\bm \in \Gamma_{n/2}^2$,
\[
    e^{-\frac{3}{2}n \rho_n} \bar{\pi}_{n,\beta}(\bm)\leq \pi_{n,\beta} (\bm)\leq e^{\frac{3}{2}n \rho_n} \bar{\pi}_{n,\beta} (\bm).
\]
\end{lemma}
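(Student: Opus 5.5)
The plan is to compare the two induced measures term by term. Write both as ratios of restricted partition functions,
\[
\pi_{n,\beta}(\bm)=\frac{Z_{n,\beta}(\bm)}{Z_{n,\beta}},\qquad \bar\pi_{n,\beta}(\bm)=\frac{\bar Z_{n,\beta}(\bm)}{\bar Z_{n,\beta}},\qquad Z_{n,\beta}(\bm)=\sum_{\s:\,\bm^{(n)}(\s)=\bm}e^{-\beta H_n(\s)},
\]
with $\bar Z_{n,\beta}(\bm)$ defined analogously. Since $H_n=\bar H_n+\Delta_n$, for every $\s$ we have $e^{-\beta\bar H_n(\s)}e^{-\beta\sup_\tau|\Delta_n(\tau)|}\le e^{-\beta H_n(\s)}\le e^{-\beta\bar H_n(\s)}e^{\beta\sup_\tau|\Delta_n(\tau)|}$.

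First, summing these pointwise bounds over $\{\s:\bm^{(n)}(\s)=\bm\}$ gives $e^{\mp\beta\sup|\Delta_n|}\bar Z_{n,\beta}(\bm)$ as lower and upper bounds for $Z_{n,\beta}(\bm)$. Second, summing over all of $\mathcal X_n$ gives the same two-sided comparison between $Z_{n,\beta}$ and $\bar Z_{n,\beta}$. Taking the ratio, the deviations in numerator and denominator add, and we get, for $\omega\in\Omega^*$,
\[
e^{-2\beta\sup_\s|\Delta_n(\s)|}\,\bar\pi_{n,\beta}(\bm)\le\pi_{n,\beta}(\bm)\le e^{2\beta\sup_\s|\Delta_n(\s)|}\,\bar\pi_{n,\beta}(\bm).
\]
The bound is uniform in $\bm$ because Lemma~\ref{lem:Delta_n} is uniform in $\s$.

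The remaining step, and the only delicate point, is to make the exponent equal to $\frac32 n\rho_n$ exactly, without an extra factor $2\beta$. Here I use that the tolerance sequence in \eqref{set_concentration} is at our disposal: it is only required that it tends to $0$ and is at least $3/\sqrt{pn}$.

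I run the construction of $\bar\Omega_n$ and $\Omega^*$ with tolerance $\tilde\rho_n=\rho_n/(2\beta)$ in \eqref{set_concentration}. To keep this admissible, $\rho_n$ is chosen with $\rho_n\ge 6\beta/\sqrt{pn}$ and $\rho_n\to0$, which is possible by \eqref{assump}. Then Lemma~\ref{lemma1} and Lemma~\ref{lem:omega*} apply verbatim to $\tilde\rho_n$, since $\tilde\rho_n\ge 3/\sqrt{pn}$ and $\tilde\rho_n\to0$. Lemma~\ref{lem:Delta_n} then gives $|\Delta_n(\s)|\le\frac32 n\tilde\rho_n$, hence $2\beta\sup_\s|\Delta_n(\s)|\le\frac32 n\rho_n$, which is precisely the stated estimate.

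I expect this bookkeeping of the constant to be the main obstacle: one must check that the rescaled tolerance still makes $\frac n6 p\tilde\rho_n^2-2\log 2$ bounded below by a positive constant in the proof of Lemma~\ref{lemma1}. This holds because $\tilde\rho_n\ge 3/\sqrt{pn}$ gives $\frac n6 p\tilde\rho_n^2\ge\frac32$. The lower bound $\frac32-2\log2$ is not positive, so I would strengthen the choice to $\tilde\rho_n\sqrt{pn}\to\infty$, which is compatible with $\tilde\rho_n\to0$ because $pn\to\infty$. Then $c_1,c_2>0$ exist for large $n$ and Borel--Cantelli goes through, giving $\mathbb P(\Omega^*)=1$ with the new tolerance.
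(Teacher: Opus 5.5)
This is the paper's argument: the pointwise bound on $e^{-\beta\Delta_n(\s)}$ from Lemma~\ref{lem:Delta_n}, summed over the fibre $\{\bm^{(n)}(\s)=\bm\}$. You carry it out more carefully than the paper does. The printed proof never compares $Z_{n,\beta}$ with $\bar Z_{n,\beta}$, and it silently drops the factor $\beta$. What it actually establishes is your intermediate estimate $e^{\pm2\beta\sup_{\s}|\Delta_n(\s)|}\le e^{\pm3\beta n\rho_n}$, and this implies the stated constant only when $\beta\le\frac12$.

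I have two remarks on your last step.

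First, the rescaling $\tilde\rho_n=\rho_n/(2\beta)$ does not prove the inequality on the $\Omega^*$ of the statement. In the lemma, the same sequence $\rho_n$ both defines $\Omega^*$ through \eqref{set_concentration} and appears in the exponent. Your event is built from $\tilde\rho_n$ instead, so it depends on $\beta$ and, for $\beta>\frac12$, is a smaller event. For fixed $\beta$ it still has probability one, so nothing is lost in applications. The cleaner formulation is to keep the $\beta$-independent $\Omega^*$ and record the constant as $3\beta n\rho_n$; any bounded multiple of $n\rho_n$ serves the same purpose.

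Second, $\frac32-2\log2\approx0.11>0$, so the reason you give for requiring $\tilde\rho_n\sqrt{pn}\to\infty$ is an arithmetic slip. The requirement is nevertheless a good one, for a different reason. The proof of Lemma~\ref{lemma1} uses $\frac{n^2}{4}(2E_{\alpha_n}(\bm)+1+\alpha_n)$ for $|\cE_I^+(\s)|+\alpha_n|\cE_E^+(\s)|$. However, \eqref{cardinality_partial} yields $\frac{n^2}{8}(2E_{\alpha_n}(\bm)+1+\alpha_n)-\frac n2$. With the correct count and $\alpha_n\to0$, the lower bound $\rho_n\ge3/\sqrt{pn}$ no longer beats the entropy factor $2^n$ in that union bound.
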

\begin{proof}
By \eqref{FN_ER}, we can write
   \begin{align*}
       \pi_{n,\beta}(\bm)=\sum_{\substack{\sigma \in \mathcal{X}_n: \\ \bm^{(n)}(\sigma)=\bm}} \mu_{n,\beta}(\sigma)=
       \sum_{\substack{\sigma \in \mathcal{X}_n: \\ \bm^{(n)}(\sigma)=\bm}} \frac{e^{-\beta H_n(\sigma)}}{Z_{n,\beta}}=
       \sum_{\substack{\sigma \in \mathcal{X}_n: \\ \bm^{(n)}(\sigma)=\bm}} \frac{e^{-\beta (\Delta_n(\sigma)+\bar{H}_n(\sigma))}}{Z_{n,\beta}}.
   \end{align*}
Moreover, by Lemma \ref{lem:Delta_n}, we have
    \begin{align*}
    e^{- \frac{3}{2}\beta n \rho_n} \leq e^{-\beta \Delta_n(\sigma)} \leq e^{ \frac{3}{2}\beta n\rho_n},
\end{align*}
which readily provides the result.
\end{proof}

\subsection{Asymptotic magnetization law}\label{subsec:convergence_mag}

We are now ready to prove Theorem \ref{thm_convergence} and Corollary \ref{cor}.

\begin{proof}[Proof of Theorem \ref{thm_convergence}]
We start with part (i). For every $\omega \in \Omega^*$ and every $\sigma \in \mathcal{X}_n$, we can write
\begin{align*}
|f_{n,\beta} - \bar{f}_{n,\beta}|
&= \frac{1}{\beta n} \Bigg| \log \sum_{\sigma} e^{-\beta H_n(\sigma)} - \log \sum_{\sigma} e^{-\beta \bar{H}_n(\sigma)} \Bigg| = \frac{1}{\beta n} \Bigg| \log \frac{\sum_{\sigma} e^{-\beta \bar{H}_n(\sigma)} e^{-\beta \Delta_n(\sigma)}}{\sum_{\sigma} e^{-\beta \bar{H}_n(\sigma)}} \Bigg| \\
&\le \frac{1}{\beta n} \log \max_{\sigma} e^{\beta |\Delta_n(\sigma)|} \le \frac{1}{\beta n} \log e^{\frac{3}{2}\beta n \rho_n} =  \frac{3}{2}\rho_n,
\end{align*}
where in the last inequality we used Lemma \ref{lem:Delta_n}. Since $\rho_n \to 0$, the claim follows by Lemma \ref{lem:omega*}.

Parts (ii) and (iii) then follow from Lemma \ref{lem:FN_FNCW} and the corresponding results for the bipartite CW model (Theorem \ref{thm_convergence_CW} in Section \ref{sec:CW}) by a standard transfer argument: the measures $\pi_{n,\beta}$ and $\bar{\pi}_{n,\beta}$ are exponentially close, so any limiting property of $\bar{\pi}_{n,\beta}$ transfers to $\pi_{n,\beta}$.
\end{proof}

\begin{proof}[Proof of Corollary \ref{cor}] 
We start recalling the exponential convergence in probability derived in \eqref{ExpProb}, which holds under the bipartite CW measure in the whole uniqueness region.
By Lemmas \ref{lem:omega*} and \ref{lem:FN_FNCW}, the same holds $\mathbb{P}$-a.s. under the measure
$\mu_{n,\beta}$, for $n$ large enough. The stated strong law of large numbers then follows from the above exponential convergence as a straightforward application of the Borel-Cantelli Lemma. 
\end{proof}

%%%%%%%%%%%%%%%%%%%%%%%%%%%%%%%%%%%%%%%%%%%%%%%%%%%%%%%%%%%%%%%%%%%%%%%%%%%%%%%%%%%%%%%%%%%%%%%%%%%%%%%%%%

\section{Fluctuations of the magnetization at high temperatures}
\label{sec:fluctuations}

\subsection{Sketch of the proof}
To prove Theorem \ref{thm:CLT}, we use the fact that convergence of a sequence of probability measures is naturally defined via bounded continuous test functions. For $\varphi \in C_b(\mathbb{R}^2)$, define the generalized partition function
\begin{equation}
\label{def:ZN}
Z_{n,\beta}(\varphi) = \sum_{\sigma \in \mathcal{X}_n} 
e^{-\beta H_n(\sigma)} \, \varphi\left(\sqrt{n} \bm^{(n)}(\sigma) \right),
\end{equation}
and note that $Z_{n,\beta}(1) = Z_{n,\beta}$ is the standard partition function. Then
\begin{equation*}
%\int_{\mathbb{R}^2} \varphi(x) \, d\mathcal{L}(\sqrt{n} \bm)(x) = 
\mathbb{E}_{\mu_{n,\beta}}\left[\varphi\left(\sqrt{n}\bm^{(n)}\right)\right] =\frac{Z_{n,\beta}(\varphi)}{Z_{n,\beta}},
\end{equation*}
and the proof reduces to analyzing the limit of $Z_{n,\beta}(\varphi)/Z_{n,\beta}$ for all $\varphi \in C_b(\mathbb{R}^2)$ as $n \to \infty$. We show that this ratio converges in the limit to the expectation of $\varphi$ under an appropriate random vector. In order to do so, we define the tilted partition function
\begin{equation*}
\label{def:ZNtilted}
\widetilde{Z}_{n,\beta}(\varphi) = Z_{n,\beta}(\varphi) \, e^{-\log \cosh\left(\frac{\beta}{np}\right) \left(\sum_{(i,j) \in \cE_I} \xi_{ij} + \sum_{(i,j) \in \cE_E} \zeta_{ij}\right)},
\end{equation*}
and notice that 
\begin{equation*}
\frac{Z_{n,\beta}(\varphi)}{Z_{n,\beta}} = \frac{\widetilde{Z}_{n,\beta}(\varphi)}{\widetilde{Z}_{n,\beta}(1)} = \frac{\widetilde{Z}_{n,\beta}(\varphi)}{\E[\widetilde{Z}_{n,\beta}(\varphi)]} \frac{\E[\widetilde{Z}_{n,\beta}(\varphi)]}{\E[\widetilde{Z}_{n,\beta}(1)]} \frac{\E[\widetilde{Z}_{n,\beta}(1)]}{\widetilde{Z}_{n,\beta}(1)}.
\end{equation*}
We will prove that the first and third ratios in the r.h.s. converge to 1, while the second ratio converges to the desired limit. 

\subsection{Tilted partition function and preliminaries}

It is convenient to provide an equivalent representation of $\widetilde{Z}_n(\varphi)$ defined in \eqref{def:ZNtilted}.
In particular, we can write
\begin{equation}
\label{eq:ZT}
\widetilde{Z}_n(\varphi) = \sum_{\sigma \in \mathcal{X}_n} \varphi\left(\sqrt{n} \bm^{(n)}(\sigma) \right) T(\sigma),
\end{equation}
where
\begin{equation}
\label{def:Tsigma}
T(\sigma) = e^{\gamma \left( \sum_{(i,j) \in \cE_I} \sigma_i \sigma_j \xi_{ij} + \sum_{(i,j) \in \cE_E} \sigma_i \sigma_j \zeta_{ij} \right) - \log \cosh(\gamma) \left(\sum_{(i,j) \in \cE_I} \xi_{ij} + \sum_{(i,j) \in \cE_E} \zeta_{ij}\right)},
\end{equation}
with $\gamma = \frac{\beta}{np}$. With this notation, the following asymptotic estimates hold true.
\begin{lemma}
\label{lemma:T}
For $np \to \infty$ as $n \to \infty$, the two following statements hold.
\begin{itemize}
\item[(i)] For all $\sigma \in \mathcal{X}_n$,
$$ 
\E[T(\sigma)] = 
e^{- \frac{\beta^2}{8} (1+\alpha_n^2)-\frac{\beta}{2} + \left(\frac{\beta}{2}n+O\left(\frac{1}{np^2}\right)\right) E_{\alpha_n}(\bm^{(n)}(\s)) +O \left ( \frac{1}{n}\right )},
$$
where $E_{\alpha_n}(\bm)$ is defined in \eqref{def:hamiltonian_CW_magnet}.
\item[(ii)] For all $\sigma, \tau \in \mathcal{X}_n$,
\[
\begin{split}
\E[T(\sigma)T(\tau)] 
= &\, e^{- \frac{\beta^2}{4} (1+\alpha_n^2) -\beta
 + \left(\frac{\beta}{2}n+O\left(\frac{1}{np^2}\right)\right)
\Big( E_{\alpha_n} (\bm^{(n)}(\sigma)) +E_{\alpha_n}(\bm^{(n)}(\tau)) \Big)}\\
& \cdot e^{ O\left(\frac{1}{p}\right) E_{\alpha_n} (\bm^{(n)}(\sigma\tau))+ O \left ( \frac{1}{n}\right)}\,,
\end{split}
\]
where $\sigma \tau$ is the configuration obtained by multiplying the spins of $\sigma$ and $\tau$ component-wise, that is for each 
$i \in V$ we have $(\sigma \tau)_i=\sigma_i \tau_i$.
\end{itemize}

\end{lemma}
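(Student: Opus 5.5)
The plan is to use the independence of the Bernoulli variables so that $\E[T(\sigma)]$ factorizes over edges, and then expand each factor in the small parameter $\gamma=\beta/(np)\to 0$. For an internal edge $(i,j)$, independence of the $\xi_{ij}$ gives
\[
\E\bigl[e^{\xi_{ij}(\gamma\sigma_i\sigma_j-\log\cosh\gamma)}\bigr]=1-p+p\,\frac{e^{\gamma\sigma_i\sigma_j}}{\cosh\gamma}.
\]
Since $e^{\gamma s}/\cosh\gamma=1+s\tanh\gamma$ for $s=\pm1$, this equals $1+p\,\sigma_i\sigma_j\tanh\gamma$. External edges give the same factor with $p$ replaced by $\alpha_n p$. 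Hence
\[
\E[T(\sigma)]=\prod_{\cE_I}(1+p\sigma_i\sigma_j\tanh\gamma)\prod_{\cE_E}(1+\alpha_n p\sigma_i\sigma_j\tanh\gamma),
\]
and we take logarithms.

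Next we expand $\log(1+x s)$ with $s=\pm1$ and $x=p\tanh\gamma=\beta/n+O(\beta^3/(n^3p^2))$. The expansion is $xs-x^2/2+O(x^3)$, where the $x^2$ term uses $s^2=1$ and is independent of $\sigma$. Summing over edges produces three contributions.

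First, the linear terms give $p\tanh\gamma\bigl(\sum_{\cE_I}\sigma_i\sigma_j+\alpha_n\sum_{\cE_E}\sigma_i\sigma_j\bigr)$. We rewrite this with the identity $\sum_{\cE_I}\sigma_i\sigma_j+\alpha_n\sum_{\cE_E}\sigma_i\sigma_j=\frac{n^2}{4}\cdot 2E_{\alpha_n}(\bm)/1\cdot\frac12\ldots$; concretely it equals $\frac{n^2}{2}E_{\alpha_n}(\bm)-\frac n2$, as in the relation $\bar H_n=-\frac n2E_{\alpha_n}+1$. The result is $\bigl(\frac{\beta}{2}n+O(\frac{1}{np^2})\bigr)E_{\alpha_n}(\bm)-\frac{\beta}{2}+O(1/n)$.

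Second, the quadratic terms give $-\frac{x^2}{2}\bigl(|\cE_I|+\alpha_n^2|\cE_E|\bigr)$. Using $|\cE_I|\approx|\cE_E|=n^2/4$ this becomes $-\frac{\beta^2}{8}(1+\alpha_n^2)+O(1/n)$. The $\alpha_n^2$ arises because the external parameter is $\alpha_n p$, so its square carries $\alpha_n^2$.

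Third, the cubic remainder is $O(n^2x^3)=O(1/n)$.

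Part (ii) follows by the same route. Per edge we now need $\E\bigl[e^{\xi(\gamma(s+t)-2\log\cosh\gamma)}\bigr]$ with $s=\sigma_i\sigma_j$ and $t=\tau_i\tau_j$. We use the identity
\[
\frac{e^{\gamma s}e^{\gamma t}}{\cosh^2\gamma}=(1+s\tanh\gamma)(1+t\tanh\gamma)=1+(s+t)\tanh\gamma+st\tanh^2\gamma.
\]
The factor is therefore $1+p(s+t)\tanh\gamma+p\,st\tanh^2\gamma$, with $st=(\sigma\tau)_i(\sigma\tau)_j$.

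Taking logs, the linear terms in $s$ and $t$ reproduce the two energies $E_{\alpha_n}(\bm(\sigma))$ and $E_{\alpha_n}(\bm(\tau))$ exactly as in (i), each with its own $-\beta/2$. The quadratic term is $-\frac{x^2}{2}(s+t)^2=-x^2(1+st)$. Its constant part gives $-\frac{\beta^2}{4}(1+\alpha_n^2)$, and its $st$ part is $O(1/n)\cdot E$-sized, so it is absorbed.

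The new term is $p\tanh^2\gamma\sum st$, which after weighting by $\alpha_n$ on external edges equals $\frac{\beta^2}{n^2p}\bigl(\frac{n^2}{2}E_{\alpha_n}(\bm(\sigma\tau))-\frac n2\bigr)$. This is $O(1/p)E_{\alpha_n}(\bm(\sigma\tau))+O(1/(np))$. The second-order cross terms $x\cdot p\gamma^2$ are also small.

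The main obstacle is bookkeeping rather than any single hard step. We must check that every error term is genuinely $O(1/n)$, or $O(1/(np^2))$ multiplied by $E$, uniformly in $\sigma$ and $\tau$. This uses only $|E_{\alpha_n}|\le 1$ and $np\to\infty$. The cross term $p\,st\tanh^2\gamma$ deserves the most care, since it is exactly what produces the $O(1/p)$ coefficient in front of $E_{\alpha_n}(\bm(\sigma\tau))$.
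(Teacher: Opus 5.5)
Your proposal is correct in substance and follows the same route as the paper. You factorize over edges, write each log-factor as an affine function of $\sigma_i\sigma_j$ (resp.\ of $\sigma_i\sigma_j$, $\tau_i\tau_j$ and their product), and resum using $\sum_{\cE_I}\sigma_i\sigma_j+\alpha_n\sum_{\cE_E}\sigma_i\sigma_j=\frac{n^2}{2}E_{\alpha_n}(\bm^{(n)}(\sigma))-\frac n2$. The only difference is how you get the coefficients: you use the exact identity $e^{\gamma s}/\cosh\gamma=1+s\tanh\gamma$, whereas the paper imports the expansions of $F_m$ from Lemma~2.1 of \cite{KLS20}. Your version is self-contained and shows the $st$-coefficient $p(1-p)\tanh^2\gamma$ explicitly.

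There are two bookkeeping slips to fix.

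First, the $st$ part of $-x^2(1+st)$ is not $O(1/n)\cdot E$. Summed over the edges it equals $-\frac{\beta^2}{8}\|\bm^{(n)}(\sigma\tau)\|^2-\frac{\beta^2\alpha_n^2}{4}m^{(n)}_1(\sigma\tau)m^{(n)}_2(\sigma\tau)+o(1)$, which is of order one. It is still absorbed into the $O(1/p)$ term because $p\le 1$. However, the combined leading-order weights on internal and external edges are $p(1-p)\tanh^2\gamma$ and $\alpha_n p(1-\alpha_n p)\tanh^2\gamma$, and these are not in ratio $\alpha_n$. So ``$O(1/p)E_{\alpha_n}(\bm^{(n)}(\sigma\tau))$'' has to be read as $O(1/p)\|\bm^{(n)}(\sigma\tau)\|^2$, which is all that is needed later.

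Second, some of the constants you produce are $o(1)$ but not $O(1/n)$ when $p\to 0$:
\begin{itemize}
\item the term of order $1/(np)$ coming from $-\frac n2$ times the $st$-coefficient;
\item the $O((np)^{-2})$ terms coming from $x-\beta/n=O(n^{-3}p^{-2})$ and $x^2-\beta^2/n^2=O(n^{-4}p^{-2})$, multiplied by $n/2$ and $n^2/4$.
\end{itemize}
So the check you name as the main obstacle, that every error is $O(1/n)$, cannot succeed literally. The paper's proof has the same imprecision, in its $-\frac n2 b_{12}$ and $-\frac n2 a_1$ terms. Only a factor $e^{o(1)}$ is used downstream, so $o(1)$ is the right target.
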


\begin{proof}
To prove the above lemma we need some technical preparation. 
For an integer $m$ and arbitrary complex variables $y$ and $z$, we introduce the function
\begin{equation*}
F_m(x,y,z) = \log \left(1-y+ye^{xz - m \log \cosh(z)}\right).
\end{equation*}
The power series expansions of some linear combinations of $F_m(x,y,z)$, 
with $m$ and $x$ fixed, have been investigated in \cite{KLS20}. 
In particular, by Lemma 2.1 in \cite{KLS20}, for $y$ and $z$ variables around $(0,0)$,
we have the following expansions:
\begin{equation}\label{lemma:Fcomb}
\begin{split}
& F_1(1,y,z) + F_1(1,y,-z) = -y^2 z^2 + O(y^2z^4), \\
& F_1(1,y,z) - F_1(1,y,-z) = 2yz + O(yz^3), \\
& F_2(2,y,z) - F_2(2,y,-z) = 4yz + O(yz^3), \\
& F_2(2,y,z) + F_2(2,y,-z) - 2F_2(0,y,z) = 4y(1-y) \tanh^2(z) + O(y^3z^3), \\
& F_2(2,y,z) + F_2(2,y,-z) + 2F_2(0,y,z) = -4y^2z^2 + O(y^2z^4).
\end{split}
\end{equation}

We prove the two statements separately. For part (i), we want to study
\begin{equation*}
\begin{split}
\E\left[T(\sigma) \right] & = \E\left[e^{ \gamma \left( \sum_{(i,j) \in \cE_I} \sigma_i \sigma_j \xi_{ij} + \sum_{(i,j) \in \cE_E} \sigma_i \sigma_j \zeta_{ij} \right) - \log \cosh(\gamma) \left( \sum_{(i,j) \in \cE_I} \xi_{ij} + \sum_{(i,j) \in \cE_E} \zeta_{ij} \right) }\right] \\
& = \prod_{(i,j) \in \cE_I} \E \left[ e^{\xi_{ij} \left( \gamma \sigma_i \sigma_j - \log \cosh(\gamma) \right) } \right] \prod_{(i,j) \in \cE_E} \E \left[ e^{ \zeta_{ij} \left( \gamma \sigma_i \sigma_j - \log \cosh(\gamma) \right)  } \right] \\
& = \prod_{(i,j) \in \cE_I}\!\!\! \left( 1 - p + pe^{\gamma \sigma_i \sigma_j - \log \cosh(\gamma)}\right) \prod_{(i,j) \in \cE_E} \!\!\!\left( 1 - \alpha_n p + \alpha_n p e^{\gamma \sigma_i \sigma_j - \log \cosh (\gamma)}\right).
\end{split}
\end{equation*}
Start by defining
\begin{align*}
& f(x) = \log \left( 1-p+pe^{\gamma x - \log \cosh(\gamma)}\right) = F_1(x,p,\gamma), \\
& f_{\alpha_n}(x) = \log \left( 1-\alpha_n p+\alpha_n pe^{\gamma x - \log \cosh(\gamma)}\right) = F_1(x, \alpha_n p,\gamma),
\end{align*}
so that we can write
\begin{equation*}
\E[T(\sigma)] = e^{ \sum_{(i,j) \in \cE_I} f(\sigma_i \sigma_j) + \sum_{(i,j) \in \cE_E} f_{\alpha_n}(\sigma_i \sigma_j)}.
\end{equation*}
Since $\sigma_i \in \{ -1,+1 \}$ for all $i$, we are only interested in $f(\pm 1)$ and $f_{\alpha_n}(\pm 1)$, which can be calculated by linearizing 
\begin{align*} 
\begin{array}{ll}
f(x) = a_0 + a_1 x, & x \in \{ -1,+1 \}, \\
f_{\alpha_n}(x) = a_{0, \alpha_n} + a_{1, \alpha_n} x, & x \in \{ -1,+1 \}.
\end{array}
\end{align*}
Using the above linearization, the expansions in \ref{lemma:Fcomb}, and substituting $\gamma = \frac{\beta}{np}$, we get
\begin{align*}
& a_0 = \frac{F_1(1, p, \gamma) + F_1(1, p, -\gamma)}{2} = -\frac{p^2 \gamma^2}{2}  + O(p^2 \gamma^4) = -\frac{\beta^2}{2n^2} + O\left(\frac{1}{n^4p^2} \right), \\
& a_1 = \frac{F_1(1, p, \gamma) - F_1(1, p, -\gamma)}{2} = p \gamma + O(p \gamma^3) = \frac{\beta}{n} + O\left(\frac{1}{n^3p^2} \right), \\
& a_{0, \alpha_n} = \frac{F_1(1, \alpha_n p, \gamma) + F_1(1, \alpha_n p, -\gamma)}{2} = -\frac{\alpha_n^2 p^2 \gamma^2}{2}  + O(p^2 \gamma^4) = - \frac{\alpha_n ^2 \beta^2}{2n^2} + O\left(\frac{1}{n^4p^2} \right), \\
& a_{1, \alpha_n} = \frac{F_1(1, \alpha_n p, \gamma) - F_1(1, \alpha_n p, -\gamma)}{2} = \alpha_n p \gamma + O(p \gamma^3) = \frac{\alpha_n \beta}{n} + O\left(\frac{1}{n^3p^2} \right).
\end{align*}
Hence, recalling that $m_i^{(n)}(\s)=\frac{1}{n/2} \sum_{j \in V_i} \sigma_j$ for $i=1,2$,
\begin{equation*}
\begin{split}
\E[T(\sigma)] & = e^{ \sum_{(i,j) \in \cE_I} f(\sigma_i \sigma_j) + \sum_{(i,j) \in \cE_E} f_{\alpha_n}(\sigma_i \sigma_j) } \\
& = e^{ \sum_{(i,j) \in \cE_I} (a_0 + a_1 \sigma_i \sigma_j) + \sum_{(i,j) \in \cE_E} (a_{0, \alpha_n} + a_{1, \alpha_n} \sigma_i \sigma_j)} \\
& = e^{\frac{n^2}{4}\left( a_0 + a_{0,\alpha_n}\right) + a_1 \left( \frac{n^2}{8} \|\bm^{(n)}(\s)\|^2 \right) + a_{1,\alpha_n} \frac{n^2}{4} m_1^{(n)}(\s) m_2^{(n)}(\s) -\frac{n}{2} (a_0+a_1)
} \\
& = e^{- \frac{\beta^2}{8} (1+\alpha_n^2)-\frac{\beta}{2} + \left(\frac{\beta}{2}n+O\left(\frac{1}{np^2}\right)\right) E_{\alpha_n}(\bm^{(n)}(\s)) +O \left(\frac{1}{n} \right )}.
\end{split}
\end{equation*}

For part (ii), we now want to study
\begin{equation*}
\begin{split}
\E\left[T(\sigma) T(\tau) \right] & = \prod_{(i,j) \in \cE_I} \E \left[ e^{\xi_{ij} \left( \gamma (\sigma_i \sigma_j + \tau_i \tau_j) - 2 \log \cosh(\gamma) \right) } \right] \\
&\qquad\qquad \cdot \prod_{(i,j) \in \cE_E} \E \left[ e^{ \zeta_{ij} \left( \gamma (\sigma_i \sigma_j + \tau_i \tau_j) - 2 \log \cosh(\gamma) \right)  } \right] \\
& = e^{\sum_{(i,j) \in \cE_I} g(\sigma_i \sigma_j + \tau_i \tau_j) + \sum_{(i,j) \in \cE_E} g_{\alpha_n}(\sigma_i \sigma_j + \tau_i \tau_j)},
\end{split}
\end{equation*}
where 
\begin{align*}
& g(x) = \log \left( 1-p+pe^{\gamma x - 2 \log \cosh(\gamma)}\right) = F_2(x,p,\gamma), \\
& g_{\alpha_n}(x) = \log \left( 1-\alpha_n p+\alpha_n pe^{\gamma x - 2 \log \cosh(\gamma)}\right) = F_2(x, \alpha_n p,\gamma).
\end{align*}
Since $\sigma_i \sigma_j + \tau_i \tau_j \in \{ -2, 0, 2 \}$ for all $i,j$, we are only interested in $g(0), g(\pm2), g_{\alpha_n}(0)$ and $g_{\alpha_n}(\pm2)$, which can be represented in the form
\begin{align*}
& g(x_1 + x_2) = b_0 + b_1 x_1 + b_2 x_2 + b_{12} x_1x_2, \qquad x_1, x_2 \in \{ \pm 1 \}, \\
& g_{\alpha_n}(x_1 + x_2) = b_{0, \alpha_n} + b_{1, \alpha_n} x_1 + b_{2, \alpha_n} x_2 + b_{12, \alpha_n} x_1x_2, \qquad x_1, x_2 \in \{ \pm 1 \}.
\end{align*}
Solving two appropriate systems of four linear equations in four variables each, using the expansions in \ref{lemma:Fcomb}, and substituting $\gamma = \frac{\beta}{np}$, we get
\begin{equation}\label{eq:cost-b}
\begin{split}
& b_0 =  \frac{F_2(2,p, \gamma) + F_2(2,p,-\gamma) + 2F_2(0,p,\gamma)}{4} =- \frac{\beta^2}{n^2} + O \left( \frac{1}{n^4p^2}\right), \\
& b_{12} = \frac{F_2(2,p, \gamma) + F_2(2,p,-\gamma) - 2F_2(0,p,\gamma)}{4} = O \left( \frac{1}{n^2p}\right), \\
& b_1 = b_2 = \frac{F_2(2,p, \gamma) - F_2(2,p,-\gamma)}{4} =\frac{\beta}{n} + O \left( \frac{1}{n^3 p^2}\right), \\
\end{split}
\end{equation}
and
\begin{equation}
\label{eq:cost-balpha}
\begin{split}
& b_{0, \alpha_n} = \frac{F_2(2,\alpha_n p, \gamma) + F_2(2,\alpha_n p,-\gamma) + 2F_2(0,\alpha_n p,\gamma)}{4} = - \frac{\alpha_n^2 \beta^2}{n^2} + O \left( \frac{1}{n^4p^2}\right), \\
& b_{12, \alpha_n} = \frac{F_2(2,\alpha_n p, \gamma) + F_2(2,\alpha_n p,-\gamma) - 2F_2(0,\alpha_n p,\gamma)}{4} = O \left( \frac{1}{n^2p}\right), \\
& b_{1,\alpha_n} = b_{2,\alpha_n} = \frac{F_2(2,\alpha_n p, \gamma) - F_2(2,\alpha_n p,-\gamma)}{4} = \frac{\alpha_n \beta}{n} + O \left( \frac{1}{n^3 p^2}\right).
\end{split}
\end{equation}
Hence we obtain
\begin{equation*}
\begin{split}
\E[T(\sigma)T(\tau)]  & =\, \exp \Bigg( \sum_{(i,j) \in \cE_I} g(\sigma_i \sigma_j + \tau_i \tau_j) + \sum_{(i,j) \in \cE_E} g_{\alpha_n}(\sigma_i \sigma_j + \tau_i \tau_j)\Bigg) \\
& = \, \exp \Bigg( \sum_{(i,j) \in \cE_I} (b_0 + b_1 \sigma_i \sigma_j + b_2 \tau_i \tau_j + b_{12} \sigma_i \sigma_j \tau_i \tau_j) \\
& \qquad \qquad \quad + \sum_{(i,j) \in \cE_E} (b_{0, \alpha_n} + b_{1, \alpha_n} \sigma_i \sigma_j + b_{2, \alpha_n} \tau_i \tau_j + b_{12, \alpha_n} \sigma_i \sigma_j \tau_i \tau_j) \Bigg).
\end{split}
\end{equation*}
Expanding the sums in the last display, and recalling the definition of $\sigma\tau$ in Lemma \ref{lemma:T}, we get 
\begin{equation*}
\begin{split}
\E[T(\sigma)T(\tau)] 
& 
= \, \exp \Bigg( \frac{n^2}{4}(b_0 + b_{0,\alpha_n})
 + b_1 \frac{n^2}{8} \|\bm^{(n)}(\sigma)\|^2  + b_{1,\alpha_n} \frac{n^2}{4} m_1^{(n)}(\sigma) m_2^{(n)}(\sigma) \\ 
& \qquad \qquad 
+ b_2\frac{n^2}{8} \|\bm^{(n)}(\tau)\|^2 + b_{1,\alpha_n} \frac{n^2}{4} m_1^{(n)}(\tau) m_2^{(n)}(\tau) \\ 
& \qquad \qquad + b_{12} \frac{n^2}{8} \|\bm^{(n)}(\sigma\tau)\|^2 + b_{12,\alpha_n} \frac{n^2}{4} m_1^{(n)}(\sigma\tau) m_2^{(n)}(\sigma\tau) \\
& \qquad \qquad -\frac{n}{2} (b_0+b_1+b_2+b_{12})\Bigg).
\end{split}
\end{equation*}
Finally, inserting the specific values in \eqref{eq:cost-b} and \eqref{eq:cost-balpha}, and developing as for the computation of $\E(T(\sigma))$, we obtain
\begin{equation*}
\begin{split}
\E[T(\sigma)T(\tau)] 
= &\, e^{- \frac{\beta^2}{4} (1+\alpha_n^2) -\beta
 + \left(\frac{\beta}{2}n+O\left(\frac{1}{np^2}\right)\right)
\Big( E_{\alpha_n} (\bm^{(n)}(\sigma)) +E_{\alpha_n} (\bm^{(n)}(\tau)) \Big)} \\
& \cdot e^{ O\left(\frac{1}{p}\right) E_{\alpha_n} (\bm^{(n)}(\sigma\tau))
+O\left ( \frac{1}{n} \right )}.
\end{split}
\end{equation*} 
\end{proof}

\subsection{Mean and variance of the tilted partition function}
We are now ready to prove the following results. 
\begin{lemma}[Mean of the tilted partition function]
\label{prop:meantilted} 
Let $\beta < \beta_c$. Under assumptions \eqref{assump}, for all non-negative $\varphi \in C_b(\R^2)$, $\varphi\not\equiv 0$, 
$$
\lim_{n\to\infty}\E[\widetilde{Z}_{n,\beta}(\varphi)] 2^{-n} = \frac{e^{ - \frac{\beta^2}{8} (1+\hat\alpha^2)-\frac{\beta}{2} }}{2} \sqrt{\det{(\Sigma)}} \, \E_X\left[ \varphi(X) \right], 
$$
where $\E_{X}$ denotes the expectation over $X \sim \mathcal{N}(0,\Sigma)$ and $\Sigma$ is given in \eqref{def:sigma}.
\end{lemma}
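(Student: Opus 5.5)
By \eqref{eq:ZT} and linearity of expectation, $\E[\widetilde{Z}_{n,\beta}(\varphi)]=\sum_{\sigma}\varphi(\sqrt n\,\bm^{(n)}(\sigma))\,\E[T(\sigma)]$. We plug in Lemma \ref{lemma:T}(i) and group configurations by magnetization. This gives
\begin{equation*}
\E[\widetilde{Z}_{n,\beta}(\varphi)]=e^{-\frac{\beta^2}{8}(1+\alpha_n^2)-\frac{\beta}{2}+O(1/n)}\sum_{\bm\in\Gamma_{n/2}^2}\binom{n/2}{\frac{1+m_1}{2}\frac n2}\binom{n/2}{\frac{1+m_2}{2}\frac n2}\varphi(\sqrt n\,\bm)\,e^{\left(\frac{\beta}{2}n+O\left(\frac{1}{np^2}\right)\right)E_{\alpha_n}(\bm)} .
\end{equation*}
The prefactor converges to $e^{-\frac{\beta^2}{8}(1+\hat\alpha^2)-\frac{\beta}{2}}$. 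The remaining sum is a CW-type partition function weighted by $\varphi$. We treat it exactly as $\bar Z_{n,\beta}(B_0)$ in Proposition \ref{Z_approx} for $\beta<\beta_c$.

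\textbf{The correction $O(1/(np^2))E_{\alpha_n}(\bm)$.} The error must be kept uniformly small, and $1/(np^2)$ need not vanish: assumption \eqref{assump} only gives $np\to\infty$. We split into $B_0$ and $\mathcal C_0$ as in \eqref{def-intorni}, with $\delta\in(1/3,1/2)$.

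On $B_0$ we have $E_{\alpha_n}(\bm)=O(n^{-2\delta})$. We then need $n^{-2\delta}/(np^2)\to0$. This is the delicate point. I would first try to sharpen the remainder in Lemma \ref{lemma:T}(i) using the exact coefficient $a_1=\frac1n(1+O(\gamma^2))$, with $\gamma^2=\beta^2/(np)^2$. The correction to the $E_{\alpha_n}$-coefficient is then $n\cdot O(\gamma^2)=O(1/(np^2))\cdot p$, i.e. really $O\bigl(1/(np)\bigr)\to 0$ multiplied by $n\cdot\frac{1}{n}$. I expect the $O(1/(np^2))$ in the lemma to be an overestimate, and that the true error is $o(1)\cdot n E_{\alpha_n}(\bm)$. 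With this, on $B_0$ the exponent becomes $\frac n2(\beta+o(1))E_{\alpha_n}(\bm)$. Since $nE_{\alpha_n}(\bm)=O(\|\sqrt n\bm\|^2)$, the $o(1)$ only multiplies a quadratic form in $\mathbf x=\sqrt n\,\bm$, which is harmless by dominated convergence.

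On $\mathcal C_0$, with the coefficient $(\beta+o(1))/2$ and $\beta<\beta_c$ strict, we can choose $\beta'\in(\beta,\beta_c)$. The contribution is then bounded by the Stirling bound \eqref{StirBound} and the strict concavity of $G_{\alpha_n,\beta'}$ at $\bm^0$ (Proposition \ref{proposition:minima1}(i)). This yields $2^n\cdot O(n^2e^{-cn^{1-2\delta}})=o(2^n)$, using $\|\varphi\|_\infty<\infty$.

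\textbf{The Gaussian sum on $B_0$.} Here we use \eqref{eq-coeffbin} and the expansion $\frac n2 G_{\alpha_n,\beta}(\bm)=n\log 2-\frac12\mathbf x^TQ_0^{(n)}\mathbf x+O(n\|\bm\|^3)$. Lattice points $\mathbf x\in\sqrt n\,\Gamma_{n/2}^2$ have cell area $16/n$, and $\varphi(\mathbf x)$ is evaluated directly at the lattice point. The sum therefore becomes
\[
2^n\,\frac{4}{\pi n}\sum_{\mathbf x}\varphi(\mathbf x)e^{-\frac12\mathbf x^TQ_0\mathbf x}(1+o(1))\;\longrightarrow\;2^n\,\frac{1}{4\pi}\int_{\R^2}\varphi(\mathbf x)e^{-\frac12\mathbf x^TQ_0\mathbf x}\,d\mathbf x ,
\]
the convergence being a Riemann-sum convergence justified by dominated convergence. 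Writing the integral as $2\pi\sqrt{\det Q_0^{-1}}\,\E[\varphi(X)]$ with $X\sim\mathcal N(0,Q_0^{-1})$, the limit equals $\frac12\sqrt{\det Q_0^{-1}}\,\E[\varphi(X)]$.

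It remains to check that $Q_0^{-1}=\Sigma$. We have $Q_0=\frac12\begin{pmatrix}1-\beta/2&-\beta\hat\alpha/2\\-\beta\hat\alpha/2&1-\beta/2\end{pmatrix}$, and its inverse is
\[
Q_0^{-1}=\frac{2}{(1-\beta/2)^2-\beta^2\hat\alpha^2/4}\begin{pmatrix}1-\beta/2&\beta\hat\alpha/2\\ \beta\hat\alpha/2&1-\beta/2\end{pmatrix}=\frac{2-\beta}{(1-\beta)+\frac{\beta^2(1-\hat\alpha^2)}{4}}\begin{pmatrix}1&\frac{\beta\hat\alpha}{2-\beta}\\ \frac{\beta\hat\alpha}{2-\beta}&1\end{pmatrix}=\Sigma,
\]
which matches \eqref{def:sigma}. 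Combining the prefactor limit with this Gaussian limit yields the claimed formula.

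The nonnegativity and nontriviality of $\varphi$ are only needed later, to make ratios meaningful. The main obstacle is the uniform control of the $1/(np^2)$ error term under the weak assumption $np\to\infty$; as outlined above, I would resolve it by re-deriving $a_1$ and $a_{1,\alpha_n}$ with relative error $O(\gamma^2)$.
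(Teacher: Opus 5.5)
Your argument is correct and follows the paper's skeleton: linearity plus Lemma~\ref{lemma:T}(i), grouping by magnetization, a Gaussian Riemann sum near $\bm^0$, and an exponential bound away from it. Where you differ is in how you control the graph correction $O(1/(np^2))E_{\alpha_n}(\bm)$. The paper shrinks the core region to the $p$-dependent radius $r_n=\sqrt{(np)^{\delta}/n}$, $\delta\in(0,1/3)$, as in \eqref{def:typ_rn}. There the correction is at most $r_n^2/(np^2)=O((np)^{\delta-2})$, which is uniformly $o(1)$. The complement is then handled with the binomial bound \eqref{eq:binomialLLT} and a Gaussian tail outside $\|\mathbf x\|^2>(np)^{\delta}$. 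You instead keep the CW ball $B_0$ of Proposition~\ref{Z_approx} and absorb the correction into the inverse temperature. Inside $B_0$ you use dominated convergence; on $\mathcal C_0$ you use $\beta'\in(\beta,\beta_c)$ together with $E_{\alpha_n}\ge 0$. The uniform-in-$n$ quadratic decay of $G_{\alpha_n,\beta'}$ that you need there follows from $\mathcal I(m)\ge-\log 2+\frac{m^2}{2}$. Your route buys some robustness. In its tail step the paper replaces $e^{O(\|\mathbf x\|^2/(np)^2)}$ by $1+o(1)$, which is not justified uniformly over the lattice unless $np^2\to\infty$. That step really rests on exactly your observation: the correction is a vanishing relative perturbation of a uniformly positive definite quadratic form. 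However, no sharpening of Lemma~\ref{lemma:T}(i) is needed, and its remainder is not an overestimate. As stated, $O(1/(np^2))=n\cdot O((np)^{-2})=o(n)$, which is already the relative $O(\gamma^2)$ error you re-derive. Your identity $n\cdot O(\gamma^2)=O(1/(np^2))\cdot p$ carries a spurious factor $p$, and $a_1$ should read $\frac{\beta}{n}(1+O(\gamma^2))$. Your explicit check that $Q_0^{-1}=\Sigma$ fills in a step the paper leaves implicit.
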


\begin{proof}
We start by defining a set of typical configurations whose magnetization is close to the unique minimum $\bm^0=(0,0)$ of the free energy functional, i.e., 
\begin{equation}
\label{def:typ_rn}
S_n = \left\{ \sigma \in \mathcal{X}_n : \|\bm^{(n)}(\sigma)\| \le r_n \right\}, \quad 
r_n = \sqrt{\frac{(np)^{\delta}}{n}}, \quad \delta \in (0,1/3) .
\end{equation}
The atypical configurations are then
\[
S_n^c = \mathcal{X}_n \setminus S_n = \left\{ \sigma \in \mathcal{X}_n : \|\bm^{(n)}(\sigma)\| > r_n \right\}.
\]
Recalling the definition of tilted partition function $\widetilde{Z}_n(\varphi)$ in \eqref{eq:ZT}, we can write
\begin{equation}
\label{eq:typicalatypical}
\begin{split}
\E [ \widetilde{Z}_n(\varphi)] & = \sum_{\sigma \in \mathcal{X}_n} \varphi\left(\sqrt{n} \bm^{(n)}(\sigma) \right) \E[T(\sigma)] \\
& = \sum_{\sigma \in S_n} \varphi\left(\sqrt{n} \bm^{(n)}(\sigma) \right) \E[T(\sigma)] + \sum_{\sigma \in S_n^c} \varphi\left(\sqrt{n} \bm^{(n)}(\sigma) \right) \E[T(\sigma)].
\end{split}
\end{equation}

\noindent
\underline{Step 1: Contribution of typical configurations.} Recall that Lemma~\ref{lemma:T}(i) gives
\[
\E[T(\sigma)] = \exp \left(- \frac{\beta^2}{8} (1+\alpha_n^2) -\frac{\beta}{2}+ \left(\frac{\beta}{2}n+O\left(\frac{1}{np^2}\right)\right) E_{\alpha_n}(\bm^{(n)}(\s)) +O \left (\frac{1}{n}\right ) \right ) ,
\]
Since $\|\bm(\sigma)\|^2 \le r_n^2$ for typical configurations, we get 
that $E_{\alpha_n}(\bm^{(n)}(\sigma)) \le r_n^2$, and hence
\begin{equation*}
O\left(\frac{1}{n p^2}\right) E_{\alpha_n}(\bm(\sigma)) \le  \frac{1}{n p^2}  r_n^2
= O\left( \frac{1}{(np)^{(2-\delta)}} \right) = o(1),
\end{equation*}
due to our choice of  $r_n$ and $\delta$ in \eqref{def:typ_rn}, and since $n p \to \infty$.
Moreover, using \eqref{eq-coeffbin}, the number of configurations with magnetization 
$\bm=(m_1,m_2)$, with $\|\bm\|\le r_n$, is given by
\begin{equation}
\label{eq:doublebinom}
  \binom{n/2}{\frac{1+m_1}{2} n/2} \binom{n/2}{\frac{1+m_2}{2} n/2}= 
   \frac{4}{\pi n\sqrt{(1-m_1^2)(1-m_2^2)}}e^{- \frac{n}{2}\mathcal{I}(\bm)}(1+o(1)),
\end{equation}
where $\mathcal{I}(\bm)=\mathcal{I}(m_1)+\mathcal{I}(m_2)$ is the entropic term introduced in \eqref{def:free_energy_functional_CW}.
Summing over magnetizations rather than configurations, the contribution of typical configurations becomes
\begin{equation}
\label{eq:typical_sum}
\begin{split} 
\sum_{\sigma \in S_n} \varphi(\sqrt{n} \bm^{(n)}(\sigma)) \E[T(\sigma)] &=C(\beta,\alpha_n)
\sum_{\substack{ \|\bm\|\le r_n}}
\frac{4}{\pi n}\varphi(\sqrt{n}\, \bm) \frac{e^{\Big( \frac{\beta}{2} n E_{\alpha_n}(\bm) -\frac{n}{2} \mathcal{I}(\bm) \Big)}}{\sqrt{(1-m_1^2)(1-m_2^2)}}
(1+o(1))\\
&= C(\beta,\alpha_n) \sum_{\substack{ \|\bm\|\le r_n}}
\frac{4}{\pi n}\varphi(\sqrt{n}\, \bm) \frac{e^{\Big( \frac{n}{2} G_{\alpha_n, \beta}(\bm)\Big)}}{\sqrt{(1-m_1^2)(1-m_2^2)}}(1+o(1))\,,
\end{split}
\end{equation}
with $C(\beta,\alpha_n)=e^{- \frac{\beta^2}{8} (1+\alpha_n^2) -\frac{\beta}{2}}$.
Elaborating as in \eqref{eq-Z-tipica}-\eqref{eq-Z-tipica2}, we first perform a Taylor expansion of $G_{\alpha_n,\beta}(\bm)$ around $G_{\alpha_n,\beta}(\bm^0)=2\log2$, getting 
\begin{align}\label{eq:G-sviluppo}
    G_{\alpha_n,\beta}(\bm)=2\log2-\frac{1}{2} \left ( 1-\frac{\beta}{2} \right ) \|\bm\|^2 +\frac{\beta \alpha_n}{2}m_1 m_2+ O(\|\bm\|^3),
\end{align}
and then we apply the change of variables ${\bf x}= \sqrt{n} \bm \in \R^2$. Altogether, using that for typical configurations $n\|\bm\|^3=o(1)$ and defining the $2\times2$ matrix 
\begin{equation}
\label{def:sigma_n}
\Sigma_n = \frac{2-\beta}{(1-\beta) + \frac{\beta^2(1-\alpha_n^2)}{4}} 
\begin{pmatrix} 
1 & \frac{\beta\alpha_n}{2-\beta} \\
\frac{\beta\alpha_n}{2-\beta} & 1
\end{pmatrix},
\end{equation}
we obtain 
$$
\sum_{\sigma \in S_n} \varphi(\sqrt{n} \bm^{(n)}(\sigma)) \E[T(\sigma)] =\frac{C(\beta,\alpha_n)}{4\pi} 2^n\sum_{ {\bf x} \in \sqrt{n}\cdot S_n}
\frac{16}{n}\varphi({\bf x})
\frac{ e^{-\frac{1}{2} ({\bf x})^\top \Sigma_n^{-1} {\bf x}}}{\sqrt{1-\frac{x_1^2}{n}}\sqrt{1- \frac{x_2^2}{n}}} (1+o(1)).
$$
We note that the Riemann sum in the last display can be approximated, for $n\gg 1$, by the integral
\begin{equation*}
\int_{\R^2} \varphi({\bf x})e^{ -\frac{1}{2} {\bf x}^\top \Sigma^{-1} {\bf x}} d{\bf x},
\end{equation*}
where the matrix $\Sigma = \lim_{n\to \infty} \Sigma_n$ is defined in \eqref{def:sigma}. Under the assumption $\beta<\beta_c$, $\Sigma$ is positive definite, 
which guarantees that the exponential weight is integrable over $\R^2$.
In particular, the integral can equivalently be expressed as a Gaussian expectation,
so that  altogether, for $X \sim \mathcal{N}(0, \Sigma)$, we obtain
\begin{equation}\label{eq:buca-finale}
\lim_{n \to \infty} \sum_{\sigma \in S_n} \varphi(\sqrt{n} \bm^{(n)}(\sigma)) \E[T(\sigma)] \, 2^{-n}
=
\frac{e^{- \frac{\beta^2}{8} (1+\hat\alpha^2) -\frac{\beta}{2}}}{2}  \sqrt{\det{(\Sigma)}} \, \E_X\left[ \varphi(X)\right].
\end{equation}
 
\noindent
\underline{Step 2: Contribution of atypical configurations.} 
Proceeding as in \eqref{eq:typical_sum}, and applying Lemma~\ref{lemma:T}(i), we easily get the bound
\begin{equation*}
\sum_{\sigma \in S_n^c} \varphi(\sqrt{n} \bm^{(n)}(\sigma)) \E[T(\sigma)] \le 2\|\varphi\|_\infty \!\!\!
\sum_{\substack{\bm \in \G_n\\ \|\bm\|> r_n}} \!\!\!\!
  \binom{n/2}{\frac{1+m_1}{2}n/2}\binom{n/2}{\frac{1+m_2}{2}n/2}e^{\left(\frac{\beta}{2}n+O\left(\frac{1}{np^2}\right)\right) E_{\alpha_n}(\bm (\sigma))},
\end{equation*}
where we used that $\|\varphi\|_\infty e^{- \frac{\beta^2(1+\alpha_n)}{8}-\frac{\beta}{2}+O\left(\frac{1}{n}\right)} \leq 2\|\varphi\|_\infty $. For large $n$, 
we then approximate the two binomials by using the local limit theorem
\begin{align}
\label{eq:binomialLLT}
    \binom{n/2}{\frac{1+m_1}{2}n/2} \binom{n/2}{\frac{1+m_2}{2}n/2}\leq \frac{C}{n} 2^ne^{ -\frac{n \| \bm \|^2}{4} }
\end{align}
where $C > 0$ is constant, and we obtain
\begin{align*}
        \sum_{\sigma \in S_n^c} \varphi(\sqrt{n} \bm^{(n)}(\sigma)) \E[T(\sigma)] 
        &\le 
        C'(\beta,\varphi) 2^n \sum_{\substack{\bm \in \G_n\\ \|\bm\|> r_n}} \frac{16}{n}
 e^{\frac{n}{2} \left( \beta E_{\alpha_n}(\bm) -\frac{\| \bm \|^2}{2}+O\left(\frac{\|\bm\|^2}{(np)^2}\right)\right)}\\
 & = C'(\beta,\varphi) 2^n \sum_{\substack{\bm \in \G_n\\ \|\bm\|> r_n}} 
 \frac{16}{n}
 e^{\frac{n}{2} \left( -\frac{1}{2}(1-\frac{\beta}{2})\|\bm\|^2 +\frac{\beta\alpha_n}{2}m_1m_2 +O\left(\frac{\|\bm\|^2}{(np)^2}\right)\right)},
 \end{align*}
for some constant $C'(\beta,\varphi)>0$.
Note that the main term in the exponent above corresponds to the main term in \eqref{eq:G-sviluppo}, which is the Taylor expansion of
$G_{\alpha_n,\beta}(\bm)$ around $G_{\alpha_n,\beta}(\bm^0)$. 
As before, applying the change of variables ${\bf x}=\sqrt{n}\bm$  
and letting $\Sigma_n$ be the matrix defined in \eqref{def:sigma_n}, 
the sum in the last display can be expressed as the Riemann sum
$$
\sum_{ {\bf x}\in \sqrt{n}\cdot S_{n}^c}
\frac{16}{n}
 e^{-\frac{1}{2} ({\bf x})^\top \Sigma_n^{-1} {\bf x} + O\left(\frac{\|{\bf x}\|^2}{(np)^2}\right)}
 =
 \sum_{ {\bf x}\in \sqrt{n}\cdot S_{n}^c}
\frac{16}{n}
 e^{-\frac{1}{2} ({\bf x})^\top \Sigma_n^{-1} {\bf x}}(1+o(1)).
$$
For $n\gg 1$, since $(np)^{\delta} \to \infty$ and the Gaussian tail outside the ball of radius $\sqrt{(np)^{\delta}}$ vanishes exponentially, the sum can be approximated by the integral
$$ \int_{\|{\bf x}\|^2> (np)^{\delta}}
 e^{-\frac{1}{2} ({\bf x})^\top \Sigma^{-1} {\bf x}}d{\bf x} =o(1),$$
where $\Sigma$ is defined in \eqref{def:sigma}. Altogether, we get that
\begin{equation}\label{eq:compl-finale}
\sum_{\sigma \in S_n^c} \varphi(\sqrt{n} \bm^{(n)}(\sigma)) \E[T(\sigma)] =o\left(2^n\right).
\end{equation}
Putting together \eqref{eq:typicalatypical}, \eqref{eq:buca-finale} and \eqref{eq:compl-finale}, we conclude.
\end{proof}

\begin{lemma}[Variance of the tilted partition function]
\label{prop:vartilted}
Let $\beta < \beta_c$. Under assumptions \eqref{assump}, for all non-negative $\varphi \in C_b(\R^2)$, $\varphi \not\equiv 0$, 
$$ \lim_{n \to \infty} \frac{{\rm Var}(\widetilde{Z}_{n,\beta}(\varphi))}{\E[\widetilde{Z}_{n,\beta}(\varphi)]^2} = 0. $$
As a consequence,
$$\frac{\widetilde{Z}_{n,\beta}(\varphi)}{\mathbb{E}[\widetilde{Z}_{n,\beta}(\varphi)]} \underset{n\to\infty}{\longrightarrow} 1 \quad \text{in } L^2(\Omega, \mathbb{P}).
$$
\end{lemma}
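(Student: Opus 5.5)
Since ${\rm Var}(\widetilde Z_{n,\beta}(\varphi))=\E[\widetilde Z_{n,\beta}(\varphi)^2]-\E[\widetilde Z_{n,\beta}(\varphi)]^2$, we will show that
\[
\limsup_{n\to\infty}\frac{\E[\widetilde Z_{n,\beta}(\varphi)^2]}{\E[\widetilde Z_{n,\beta}(\varphi)]^2}\le 1 .
\]
The $L^2$ convergence is then immediate, because $\E\big[(\widetilde Z/\E\widetilde Z-1)^2\big]={\rm Var}(\widetilde Z)/(\E\widetilde Z)^2$. By \eqref{eq:ZT},
\[
\E[\widetilde Z_{n,\beta}(\varphi)^2]=\sum_{\sigma,\tau}\varphi(\sqrt n\bm^{(n)}(\sigma))\varphi(\sqrt n\bm^{(n)}(\tau))\,\E[T(\sigma)T(\tau)],
\]
and Lemma~\ref{lemma:T}(ii) gives
\[
\E[T(\sigma)T(\tau)]=\E[T(\sigma)]\,\E[T(\tau)]\,\exp\Big(O\big(\tfrac1p\big)E_{\alpha_n}(\bm^{(n)}(\sigma\tau))+O\big(\tfrac1{np^2}\big)\big(E_{\alpha_n}(\bm^{(n)}(\sigma))+E_{\alpha_n}(\bm^{(n)}(\tau))\big)+O\big(\tfrac1n\big)\Big),
\]
after comparing with part (i). Here we use that the constant prefactors $e^{-\beta^2(1+\alpha_n^2)/4-\beta}$ are exactly the squares of those in (i). The task is therefore to show that the correction factor $e^{O(1/p)E_{\alpha_n}(\bm(\sigma\tau))}$ is $1+o(1)$ on average.

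We split the double sum according to whether $\sigma,\tau\in S_n$, with $S_n$ as in \eqref{def:typ_rn}, and whether $\|\bm^{(n)}(\sigma\tau)\|\le r_n'$ for a suitable radius $r_n'$. The natural choice is $r_n'=(np)^{-1/2}\cdot\omega_n$ with $\omega_n\to0$ slowly, or simply $\|\bm(\sigma\tau)\|^2\le \eta_n p$ with $\eta_n\to0$. On this set the correction is $e^{O(\eta_n)}=1+o(1)$. The typical part is then bounded by $(1+o(1))\big(\sum_{\sigma}\varphi\,\E T(\sigma)\big)^2$, which by Lemma~\ref{prop:meantilted} equals $(1+o(1))\E[\widetilde Z_{n,\beta}(\varphi)]^2$. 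Lemma~\ref{prop:meantilted} also tells us $\E[\widetilde Z_{n,\beta}(\varphi)]\asymp 2^n$, which is the normalization against which all remaining terms must be shown to be $o(4^n)$.

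The atypical pairs, i.e. those with $\sigma\notin S_n$ or $\tau\notin S_n$, are handled exactly as in Step 2 of Lemma~\ref{prop:meantilted}. We bound $E_{\alpha_n}(\bm(\sigma\tau))\le 1$ only when it is harmless, and otherwise use the Gaussian decay $e^{-c n\|\bm\|^2}$ of the binomial weights \eqref{eq:binomialLLT} together with $\beta<\beta_c$. This gives a contribution $o(4^n)$ whenever the $\sigma\tau$ correction stays subexponentially small relative to those weights.

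The main obstacle is the set of pairs with large overlap $\bm(\sigma\tau)$, where the factor $e^{C E_{\alpha_n}(\bm(\sigma\tau))/p}$ can be as large as $e^{C/p}$. This is huge when $p\to0$, since only $np\to\infty$ is assumed. The plan here is to change variables: for fixed $\sigma$, the map $\tau\mapsto\sigma\tau$ is a bijection, so we can sum over $\eta=\sigma\tau$ with $\bm(\eta)$ distributed like the magnetization of a uniform configuration. Its count with $\|\bm(\eta)\|=m$ is at most $C2^n e^{-nm^2/4}$ (using \eqref{eq:binomialLLT} again), while the penalty is at most $e^{Cm^2/p}$. The net exponent is $-m^2(n/4-C/p)$, and since $np\to\infty$ we have $C/p=o(n)$, so the Gaussian decay in $\eta$ wins. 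Summing over $\|\bm(\eta)\|^2>\eta_np$ then gives a vanishing fraction, because $n\eta_np\to\infty$ if $\eta_n$ decays slowly enough.

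One care point remains. Factoring $\E T(\tau)$ while summing over $\eta$ requires a bound $\E T(\tau)\le C\,2^{-n}\cdot\E[\widetilde Z_{n,\beta}(1)]$-type control uniformly in $\tau$, or else a Cauchy–Schwarz splitting. I would first try bounding $\E T(\tau)\le e^{\frac{\beta}{2}nE_{\alpha_n}(\bm(\tau))+O(1)}$ and using that for $\beta<\beta_c$ the quadratic form $\frac{n}{2}\big(\beta E_{\alpha_n}(\bm)-\|\bm\|^2/2\big)$ is negative definite. The three-variable Gaussian sum over $\bm(\sigma),\bm(\tau),\bm(\eta)$ then stays controlled, provided the joint entropy of $(\bm(\sigma),\bm(\tau),\bm(\sigma\tau))$ is bounded by the product of the marginals up to $O(1)$ Gaussian factors. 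Verifying this joint local-limit estimate is the step I expect to need the most work.
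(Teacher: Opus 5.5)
Your architecture is the paper's. You split the pairs $(\sigma,\tau)$ by the size of the overlap $\bm^{(n)}(\sigma\tau)$. On small overlaps the factor $e^{O(1/p)E_{\alpha_n}(\bm^{(n)}(\sigma\tau))}$ from Lemma~\ref{lemma:T} is $e^{o(1)}$, and $\varphi\ge 0$ together with Lemma~\ref{prop:meantilted} gives $o(\E[\widetilde Z_{n,\beta}(\varphi)]^2)$. Your threshold $\|\bm^{(n)}(\sigma\tau)\|^2\le\eta_n p$ plays the role of the paper's $r_n^2=(np)^\delta/n$, and you track the $O(1/(np^2))$ mismatch between parts (i) and (ii) of Lemma~\ref{lemma:T}, which the paper silently drops. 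On large overlaps the entropy factor $e^{-n\|\bm\|^2/4}$ beats $e^{C\|\bm\|^2/p}$ because $np\to\infty$. The step you leave open is a joint count of the pairs with prescribed $(\bm^{(n)}(\sigma),\bm^{(n)}(\tau),\bm^{(n)}(\sigma\tau))$. This is exactly the estimate the paper invokes without proof as a ``three-dimensional local central limit theorem''. So your proposal is incomplete precisely where the paper uses a black box, and how that box is filled matters.

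\textbf{Your first option fails.} A uniform bound $\E[T(\tau)]\lesssim 2^{-n}\E[\widetilde Z_{n,\beta}(1)]$ is false: for $\tau\equiv 1$ one has $\E[T(\tau)]=e^{\beta(1+\alpha_n)n/4+o(n)}$.

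\textbf{The joint bound is false in the form you want.} The product-of-marginals bound with the exact Gaussian constant $n/4$ is the form written in the paper and the one you propose to verify. It does not hold uniformly. In each community, the joint rate function of $(x,y,z)=(m_k(\sigma),m_k(\tau),m_k(\sigma\tau))$ is $\frac{x^2+y^2+z^2}{2}-xyz+O(|(x,y,z)|^4)$. Hence at $x=y=z=n^{-1/4}$ the true count exceeds $C4^n n^{-3}e^{-\frac n4(\cdots)}$ by a factor of order $e^{n^{1/4}/2}$. What is true is a bound with a slightly smaller constant on a small ball, plus an $e^{-cn}$ bound outside it. The strict margins ($\beta<\beta_c$ and $C/p=o(n)$) absorb this loss.

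\textbf{A cleaner fix.} Your ``Cauchy--Schwarz splitting'' closes the gap without any joint estimate if you take exponents near $1$ instead of $2$. Set
$a_\sigma=e^{(\frac\beta2 n+\frac{K}{np^2})E_{\alpha_n}(\bm^{(n)}(\sigma))}$ and $k(\theta)=e^{\frac Kp E_{\alpha_n}(\bm^{(n)}(\theta))}\mathbf 1_{\{\|\bm^{(n)}(\theta)\|>r_n\}}$. Then the large-overlap part of the second moment is at most $C\|\varphi\|_\infty^2\sum_{\sigma,\tau}a_\sigma a_\tau k(\sigma\tau)$. Hölder and Young's convolution inequality on the group $(\{-1,1\}^n,\cdot)$ give
\[
\sum_{\sigma,\tau}a_\sigma a_\tau k(\sigma\tau)\le\|a\|_q^2\,\|k\|_r,\qquad \tfrac1r=2-\tfrac2q .
\]
Choose $q>1$ with $q\beta<\beta_c$.
\begin{itemize}
\item Then $\|a\|_q^q\le\bar Z_{n,\beta'}=O(2^n)$ for any $\beta'\in(q\beta,\beta_c)$ and $n$ large, by Proposition~\ref{Z_approx}.
\item Since $r<\infty$ is fixed, $rK/p=o(n)$. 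Uniform sub-Gaussianity of $\sqrt n\,\bm^{(n)}$ under the uniform measure, together with $nr_n^2\to\infty$, then gives $\|k\|_r^r=o(2^n)$.
\end{itemize}
Hence the large-overlap contribution is $o(2^{n(2/q+1/r)})=o(4^n)$. Plain Cauchy--Schwarz ($q=2$) would instead require $\sum_\sigma\E[T(\sigma)]^2=O(2^n)$. That sum is essentially a Curie--Weiss partition function at inverse temperature $2\beta$, so the requirement fails as soon as $2\beta>\beta_c$.
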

\begin{proof}
Similarly to the proof above, define a set of typical pairs of configurations as
\begin{equation*}
S'_n = \left\{ (\sigma, \tau) \in \mathcal{X}_{n}^2 : \|\bm^{(n)}(\sigma\tau)\|  \le r_n, k=1,2\right\},
\end{equation*}
with $r_n = \sqrt{\frac{(np)^{\delta}}{n}}, \, \delta \in (0,1/3)$, and denote the set of atypical pairs by
$(S_n')^c$. Recalling the definition of $\widetilde{Z}_n(\varphi)$ in \eqref{eq:ZT} and decomposing into a sum over typical and atypical pairs, we can write
\begin{equation}
\label{eq:vartypicalatypical}
\begin{split}
{\rm Var}\left(\widetilde{Z}_n(\varphi)\right) = &  \sum_{(\sigma, \tau) \in S'_n} \varphi\left(\sqrt{n} \bm^{(n)}(\sigma) \right) \varphi\left(\sqrt{n} \bm^{(n)}(\tau) \right) {\rm Cov}(T(\sigma), T(\tau)) \\
& + \sum_{(\sigma, \tau) \in (S'_n)^c} \varphi\left(\sqrt{n} \bm^{(n)}(\sigma) \right) \varphi\left(\sqrt{n} \bm^{(n)}(\tau) \right) {\rm Cov}(T(\sigma), T(\tau)).
\end{split}
\end{equation}

\noindent
\underline{Step 1: Contribution of typical configurations.} We start by observing that from the definition of the covariance and Lemma \ref{lemma:T} 
\begin{equation*}
\begin{split}
\mathrm{Cov}(T(\sigma),T(\tau)) & = \E[T(\sigma)T(\tau)] - \E[T(\sigma)]\E[T(\tau)] \\
& = \E[T(\sigma)] \E[T(\tau)] \left( e^{ O\left(\frac{1}{p}\right) E_{\alpha_n}(\bm^{(n)}(\sigma\tau))} - 1 \right).
\end{split}
\end{equation*}
For typical pairs $(\sigma, \tau) \in S'_n$ the exponent becomes $o(1)$, hence
\begin{equation*}
{\rm Cov}(T(\sigma), T(\tau)) = o\left( \E[T(\sigma)] \E[T(\tau)] \right).
\end{equation*}
We can then write
\begin{equation}
\label{eq:typicalpairs}
\begin{split}
&\sum_{(\sigma, \tau) \in S'_n} \varphi\left(\sqrt{n} \bm^{(n)}(\sigma) \right) \varphi\left(\sqrt{n} \bm^{(n)}(\tau) \right) {\rm Cov}(T(\sigma), T(\tau)) \\
& \qquad =\, o \left( \sum_{(\sigma, \tau) \in S'_n} \varphi\left(\sqrt{n} \bm^{(n)}(\sigma) \right) \varphi\left(\sqrt{n} \bm^{(n)}(\tau) \right) \E[T(\sigma)] \E[T(\tau)] \right) \\
& \qquad =\, o \left( \sum_{(\sigma, \tau) \in \mathcal{X}_n^2} \varphi\left(\sqrt{n} \bm^{(n)}(\sigma) \right) \varphi\left(\sqrt{n} \bm^{(n)}(\tau) \right)  \E[T(\sigma)] \E[T(\tau)] \right) \\
& \qquad =\, o\left( \E[\widetilde{Z}_n(\varphi)]^2 \right),
\end{split}
\end{equation}
where in the second equality we used that $\varphi \geq 0$. \\

\noindent
\underline{Step 2: Contribution of atypical configurations.} 
We write
\begin{equation}
\label{eq:atypicalpairterms}
\begin{split}
& \sum_{(\sigma, \tau) \in (S'_n)^c} \varphi\left(\sqrt{n} \bm^{(n)}(\sigma) \right) \varphi\left(\sqrt{n} \bm^{(n)}(\tau) \right) {\rm Cov}(T(\sigma), T(\tau))  \\
&\qquad \qquad \leq \, \sum_{(\sigma, \tau) \in (S'_n)^c} \varphi\left(\sqrt{n} \bm^{(n)}(\sigma) \right) \varphi\left(\sqrt{n} \bm^{(n)}(\tau) \right) \E[T(\sigma) T(\tau)] \\
& \qquad \qquad  \qquad + \, \sum_{(\sigma, \tau) \in(S'_n)^c} \varphi\left(\sqrt{n} \bm^{(n)}(\sigma) \right) \varphi\left(\sqrt{n} \bm^{(n)}(\tau) \right) \E[T(\sigma)] \E[T(\tau)],
\end{split}
\end{equation}
and we analyze the two sums in the rhs separately. In order to characterize the configurations with fixed magnetization, we define the set
\begin{equation*}V(\bm',\bm'',\bm''')= \{(\sigma,\tau) \in \mathcal{X}_n^2\, : ||\bm^{(n)}(\sigma)||=\bm', \, ||\bm^{(n)}(\tau)||=\bm'',\, ||\bm^{(n)}(\sigma\tau)||=\bm'''\}.
\end{equation*}
By the three-dimensional local central limit theorem, there exists a universal constant $C>0$
such that
\begin{align*}
    |V(\bm',\bm'',\bm'')| \leq C2^{2n}n^{-3}e^{-\frac{n}{4}(||\bm'||^2+||\bm''||^2+||\bm'''||^2)}.
\end{align*}
The first sum in \eqref{eq:atypicalpairterms} then becomes
\begin{equation} \label{eq:atypicalpairfirst}
\begin{split}
&\sum_{\substack{(\bm,\bm',\bm'') : \\ || \bm'''||>r_n}} \,\,\sum_{(\sigma, \tau) \in V(\bm',\bm'',\bm''')} \varphi\left(\sqrt{n} \bm^{(n)}(\sigma) \right) \varphi\left(\sqrt{n} \bm^{(n)}(\tau) \right)\E[T(\sigma) T(\tau)] \\
&\leq C'(\beta,\varphi)2^{2n} \sum_{\substack{(\bm,\bm',\bm''): 
\\ || \bm'''||>r_n}} \!\!\!\! n^{-3}  e^{-\frac{n}{4}(||\bm'||^2+||\bm''||^2+||\bm'''||^2)} \\
& \qquad \qquad \qquad \qquad \qquad \qquad\cdot e^{
 \left(\frac{\beta}{2}n+O\left(\frac{1}{np^2}\right)\right)
\Big( E_{\alpha_n} (\bm') +E_{\alpha_n} (\bm'') \Big)
 + O\left(\frac{1}{p}\right) E_{\alpha_n} (\bm''')}  \\
 &= C'(\beta,\varphi)2^{2n} \sum_{\substack{(\bm,\bm',\bm''): 
 \\ || \bm'''||>r_n}} \!\!\!\! n^{-3}  e^{-\frac{n}{4}(||\bm'||^2+||\bm''||^2)} \\
 & \qquad \qquad \qquad \qquad \qquad \qquad\cdot e^{
 \left(\frac{\beta}{2}n+O\left(\frac{1}{np^2}\right)\right)
\Big( E_{\alpha_n} (\bm') +E_{\alpha_n} (\bm'') \Big) }e^{-\frac{n}{4}||\bm'''||^2+O\left(\frac{1}{p}\right) E_{\alpha_n} (\bm''')} \\
& = C'(\beta,\varphi)2^{2n}\sum_{\substack{(\bm,\bm',\bm''): 
\\ || \bm''||>r_n}} \!\!\!\! n^{-3}  e^{\frac{n}{2} \left( \beta E_{\alpha_n}(\bm') -\frac{\| \bm'\|^2}{2}+O\left(\frac{\|\bm'\|^2}{(np)^2}\right)\right)} e^{\frac{n}{2} \left( \beta E_{\alpha_n}(\bm'') -\frac{\| \bm'' \|^2}{2}+O\left(\frac{\|\bm''\|^2}{(np)^2}\right)\right)} \\
& \qquad \qquad \qquad \qquad \qquad \quad \,\,\,\,\,\,\, \cdot e^{n \left (-\frac{||\bm'''||^2}{4}+O\left(\frac{1}{np}\right) E_{\alpha_n} (\bm''')\right) },
\end{split}
\end{equation}
for some constant $C'(\beta,\varphi) >0.$ Note that the main terms in the first two exponents above correspond to the main terms in the Taylor expansions of $G_{\alpha_n,\beta}(\bm(\sigma))$ and $G_{\alpha_n,\beta}(\bm(\tau))$ around $G_{\alpha_n,\beta}(\bm^0)$. Apply the change of variables ${\bf x} =\sqrt{n}\bm'$, ${\bf y} =\sqrt{n}\bm''$, and ${\bf z} =\sqrt{n} \bm'''$. Let $\tilde \Sigma_n$ be the $6\times 6$ Hessian matrix of the quadratic form appearing in the exponent. Then the sum over atypical pairs can be expressed as the Riemann sum
\begin{equation*}
\sum_{({\bf x}, {\bf y}, {\bf z}) \in D_n}
\frac{1}{n^3}
 e^{-\frac{1}{2} ({\bf x}, {\bf y}, {\bf z})^\top (\tilde \Sigma_n)^{-1} ({\bf x}, {\bf y}, {\bf z})} (1+o(1)),
\end{equation*}
where
$$
D_n = \left\{ {\bf x}, {\bf y}, {\bf z} \in \sqrt{n}\cdot S_n^c: \|{\bf z}\|^2 > (np)^{\delta}  \right\}.
$$
Hence, for $n\gg 1$, since $(np)^{\delta} \to \infty$, the sum can be approximated by the integral
\begin{equation}
\label{eq:intDN}
\int_{({\bf x}, {\bf y}, {\bf z}) \in D_n}
 e^{-\frac{1}{2} ({\bf x}, {\bf y}, {\bf z}) ^\top (\tilde \Sigma)^{-1} ({\bf x}, {\bf y}, {\bf z})} d({\bf x}, {\bf y}, {\bf z}) =o(1),
\end{equation}
where $\tilde \Sigma$ can be written in block form as
\begin{equation*}
\tilde \Sigma = 
\begin{pmatrix}
\Sigma & 0 & 0 \\
0 & \Sigma & 0 \\
0 & 0 & \Sigma'
\end{pmatrix},
\end{equation*}
with $\Sigma$ the $2 \times 2$ covariance matrix defined in \eqref{def:sigma}, and $\Sigma'$ the \(2 \times 2\) block coming from the cross terms such that $(\Sigma')^{-1}=-\frac{1}{4} \text{Id}_2$. Combining the above \eqref{eq:atypicalpairfirst}-\eqref{eq:intDN} and Lemma   \ref{prop:meantilted}, we obtain
\begin{equation}
\label{eq:atyppairsfirstfinal}
\sum_{(\sigma, \tau) \in(S'_n)^c} \varphi\left(\sqrt{n} \bm(\sigma) \right) \varphi\left(\sqrt{n} \bm(\tau) \right) \E[T(\sigma) T(\tau)] = o\left(2^{2n}\right) = o\left( \E[\widetilde{Z}_n(\varphi)]^2\right).
\end{equation}
Next, we analyze the second sum in the rhs of \eqref{eq:atypicalpairterms}, which from Lemma \ref{lemma:T}(i) and \eqref{eq:binomialLLT}, can be written as
\begin{align*}
&\sum_{\substack{(\bm',\bm'',\bm''') \in \mathbb{Z}^6: \\ || \bm'''||>r_n}} \sum_{(\sigma, \tau) \in V_n} \varphi\left(\sqrt{n} \bm^{(n)}(\sigma) \right) \varphi\left(\sqrt{n} \bm^{(n)}(\tau) \right)\E[T(\sigma)] \E [ T(\tau)] \\
 &\leq C'(\beta, \varphi) 2^{2n} \sum_{\substack{(\bm',\bm'',\bm''') \in \mathbb{Z}^6: \\ || \bm'''||>r_n}} n^{-3} e^{\frac{n}{2} \left( \beta E_{\alpha_n}(\bm') -\frac{\| \bm'\|^2}{2}+O\left(\frac{\|\bm'\|^2}{(np)^2}\right)\right)}  \\
 & \qquad \qquad \qquad \qquad\qquad \qquad \qquad  \cdot e^{\frac{n}{2} \left( \beta E_{\alpha_n}(\bm'') -\frac{\| \bm'' \|^2}{2}+O\left(\frac{\|\bm''\|^2}{(np)^2}\right)\right)} \, e^{-\frac{n}{4} ||\bm'''||^2}.
\end{align*}
Arguing as before, we obtain 
\begin{equation}
\label{eq:atyppairssecondfinal}
\sum_{(\sigma, \tau) \in(S'_n)^c} \varphi\left(\sqrt{n} \bm(\sigma) \right) \varphi\left(\sqrt{n} \bm(\tau) \right) \E[T(\sigma)] \E[T(\tau)] = o\left(2^{2n}\right) = o\left( \E[\widetilde{Z}_n(\varphi)]^2\right).
\end{equation}
By putting together \eqref{eq:vartypicalatypical}, \eqref{eq:typicalpairs}, \eqref{eq:atypicalpairterms}, \eqref{eq:atyppairsfirstfinal}, and \eqref{eq:atyppairssecondfinal}, we conclude that
$$
{\rm Var}\left(\widetilde{Z}_n(\varphi)\right) = o\left( \E[\widetilde{Z}_n(\varphi)]^2\right).
$$
As a consequence, 
$$
\mathbb{E}\left[\left(\frac{\widetilde{Z}_n(\varphi)}{\mathbb{E}[\widetilde{Z}_n(\varphi)]} - 1\right)^2\right] 
= \frac{{\rm Var}(\widetilde{Z}_n(\varphi))}{\mathbb{E}[\widetilde{Z}_n(\varphi)]^2} \rightarrow 0,
$$
which shows that $\frac{\widetilde{Z}_n(\varphi)}{\mathbb{E}[\widetilde{Z}_n(\varphi)]}$ converges to 1 in $L^2$.
\end{proof}

\subsection{Proof of Theorem \ref{thm:CLT}}
\begin{proof}[Proof of Theorem \ref{thm:CLT}]
We argued that proving the CLT reduces to analyzing the limit of $Z_{n,\beta}(\varphi)/Z_{n,\beta}$ for all $\varphi \in C_b(\mathbb{R}^2)$ as $n \to \infty$, where $Z_{n,\beta}(\varphi)$ is the generalized partition function defined in \eqref{def:ZN}. By the definition of tilted partition function $\widetilde{Z}_{n,\beta}(\varphi)$ in \eqref{def:ZNtilted}, we have that
\begin{equation*}
\lim_{n \to \infty} \frac{Z_{n,\beta}(\varphi)}{Z_{n,\beta}} = \lim_{n \to \infty} \frac{\widetilde{Z}_{n,\beta}(\varphi)}{\widetilde{Z}_{n,\beta}(1)} =\lim_{n \to \infty} \frac{\widetilde{Z}_{n,\beta}(\varphi)}{\E[\widetilde{Z}_{n,\beta}(\varphi)]} \frac{\E[\widetilde{Z}_{n,\beta}(\varphi)]}{\E[\widetilde{Z}_{n,\beta}(1)]} \frac{\E[\widetilde{Z}_{n,\beta}(1)]}{\widetilde{Z}_{n,\beta}(1)}  =  \E_X\left[ \varphi(X) \right],
\end{equation*}
where the third equality holds in $\mathbb{P}$-probability thanks to Lemmas \ref{prop:meantilted} and \ref{prop:vartilted}, with $X \sim \mathcal{N}(0,\Sigma)$.
In conclusion, we have that, for all $\varphi \in C_b(\mathbb{R}^2)$, 
$$
\mathbb{E}_{\mu_{n,\beta}}[\varphi(\sqrt{n}\,\bm^{(n)})] = \frac{Z_{n,\beta}(\varphi)}{Z_{n,\beta}} 
\;\underset{n\to\infty}{\longrightarrow}\; \E_X\left[ \varphi(X) \right].
$$
This readily implies that the law $\mathcal{L}(\sqrt{n} \bm^{(n)})$, considered as a random element of the space of probability measures $\mathcal{M}(\mathbb{R}^2)$, converges in probability to the deterministic measure $\mathcal{N}(0, \Sigma)$. This concludes the proof of the theorem.
\end{proof}

%%%%%%%%%%%%%%%%%%%%%%%%%%%%%%%%%%%%%%%%%%%%%%%%%%%%%%%%%%%%%%%%%%%%%%%%%%%%%%%%%%%%%%%%%%%%%%%%%%%%%%%%%%

\end{document}